\setheadfoot{\onelineskip}{2\onelineskip} 
\def\blfootnote{\gdef\@thefnmark{}\@footnotetext}
\title{String diagrams for traced and compact categories are oriented 1-cobordisms}
\author{David I. Spivak\thanks{Supported by AFOSR grant FA9550--14--1--0031, ONR grant N000141310260, and NASA grant NNL14AA05C.} \and Patrick Schultz${}^*$\\ \and \small{\textit{Massachusetts Institute of Technology, Cambridge, MA 02139}} \and Dylan Rupel\thanks{Corresponding author}\thanks{Present address: University of Notre Dame, Notre Dame, IN 46556}\\ \small{\textit{Northeastern University, Boston, MA 02115}}}
\date{\vspace{-3ex}}
\begin{document}
\firmlists*

\maketitle
\blfootnote{\textit{Email addresses:} \href{mailto:dspivak@math.mit.edu}{dspivak@math.mit.edu}, %
  \href{schultzp@mit.edu}{schultzp@mit.edu}, %
  \href{drupel@nd.edu}{drupel@nd.edu}
}

\begin{abstract}
  We give an alternate conception of string diagrams as labeled 1-dimensional oriented cobordisms,
  the operad of which we denote by $\LCob{\LabSet}$, where $\LabSet$ is the set of string labels.
  The axioms of traced (symmetric monoidal) categories are fully encoded by $\LCob{\LabSet}$ in the sense that there is
  an equivalence between $\LCob{\LabSet}$-algebras, for varying $\LabSet$, and traced categories
  with varying object set. The same holds for compact (closed) categories, the difference being in terms of
  variance in $\LabSet$. As a consequence of our main theorem, we give a characterization of the
  2-category of traced categories solely in terms of those of monoidal and compact categories,
  without any reference to the usual structures or axioms of traced categories. In an appendix we
  offer a complete proof of the well-known relationship between the 2-category of monoidal
  categories with strong monoidal functors and the 2-category of monoidal categories whose object set is free
  with strict functors; similarly for traced and compact categories. \\

  \noindent\textbf{Keywords:} Traced monoidal categories, compact closed categories, monoidal
  categories, lax functors, equipments, operads, factorization systems.
\end{abstract}

\tableofcontents*

\chapter{Introduction}
  \label{chap:intro}

Traced (symmetric monoidal) categories have been used to model processes with
feedback~\cite{Abramsky1} or operators with fixed points~\cite{PontoShulman}. A graphical calculus
for traced categories was developed by Joyal, Street, and Verity~\cite{JoyalStreetVerity} in which
string diagrams of the form
\begin{equation}\begin{tikzpicture}[wiring diagram,baseline]
    \label{dia:string_diagram}
  \node[bb={2}{1}, bb name=$X_1$] (X1) {};
  \node[bb={2}{2}, right=of X1, bb name=$X_2$] (X2) {};
  \node[bb={2}{1}, dashed, fit={(X1) (X2) ($(X1.north)+(0,1.5)$) ($(X1.south)-(0,1)$)},
        bb name=$Y$] (Y) {};
  \draw[label]
    node[above left=of Y_in1]     {$a$}
    node[above left=of Y_in2]     {$b$}
    node[above right=of Y_out1]   {$c$}
    node[above left=of X1_in1]    {$1a$}
    node[above left=of X1_in2]    {$1b$}
    node[above right=of X1_out1]  {$1c$}
    node[above left=of X2_in1]    {$2a$}
    node[above left=of X2_in2]    {$2b$}
    node[above right=of X2_out1]  {$2c$}
    node[above right=of X2_out2]  {$2d$};
  \draw[ar] (Y_in2') to (X1_in1);
  \draw[ar] (X1_out1) to (X2_in2);
  \draw[ar] (X2_out1) to (Y_out1');
  \draw[ar] let \p1=(X1.north west), \p2=(X1.north east), \n1={\y1+\bby}, \n2=\bbportlen in
    (Y_in1') to (\x1-\n2,\n1) -- (\x2+\n2,\n1) to (X2_in1);
  \draw[ar] let \p1=(X2.south east), \p2=(X1.south west), \n1={\y1-\bby}, \n2=\bbportlen in
    (X2_out2) to[in=0] (\x1+\n2,\n1) -- (\x2-\n2,\n1) to[out=180] (X1_in2);
\end{tikzpicture}\end{equation}
represent compositions in a traced category $\cat{T}$. That is, new morphisms are constructed from
old by specifying which outputs will be fed back into which inputs. These are related to Penrose
diagrams in $\ncat{Vect}$ and the word \emph{traced} originates in this vector space
terminology.

The string diagrams of \cite{JoyalStreetVerity} typically do not explicitly include the outer box
$Y$. If we include it, as in (\ref{dia:string_diagram}), the resulting \emph{wiring diagram} can be
given a seemingly new interpretation: it represents a 1-dimensional cobordism between oriented
0-manifolds. Indeed, the objects in $\Cob$ are signed sets $X=(\inp{X},\outp{X})$, each of which
can be drawn as a box with input wires $\inp{X}$ entering on the left and output wires $\outp{X}$
exiting on the right.
\begin{equation*}
  \begin{tikzpicture}[node distance=0 and 0, baseline=(current bounding box.center)]
    \node (A1) {$-$};
    \node[below=-.1 of A1] (A2) {$-$};
    \node[below=-.1 of A2] (A3) {$-$};
    \node[below=-.1 of A3] (B1) {$+$};
    \node[below=-.1 of B1] (B2) {$+$};
  \end{tikzpicture}
  \hspace{4em}
  \begin{tikzpicture}[wiring diagram, bby=1.2ex, baseline=(current bounding box.center)]
    \node[bb={3}{2},bb name=$X$] {};
  \end{tikzpicture}
\end{equation*}
Moreover, the wiring diagram itself in which boxes $X_1,\ldots,X_n$ are wired together inside a
larger box $Y$ can be interpreted as an oriented cobordism from $X_1\sqcup\cdots\sqcup X_n$ to $Y$.
In fact, this is more appropriately interpreted as a morphism in the (colored) operad $\Cob$
underlying the symmetric monoidal category of oriented 1-cobordisms. The
following shows the two approaches to drawing a 2-ary morphism $X_1,X_2\to Y$ in $\Cob$:\begin{equation*}
  \begin{tikzpicture}[wiring diagram, baseline=(current bounding box.center)]
    \node[bb={2}{1}, bb name=$X_1$] (X1) {};
    \node[bb={2}{2}, right=of X1, bb name=$X_2$] (X2) {};
    \node[bb={2}{1}, fit={(X1) (X2) ($(X1.north)+(0,2)$) ($(X1.south)-(0,1)$)},bb name =$Y$] (Y) {};
    \draw[label]
      node[above left=of Y_in1]     {$\inp{Y}_a$}
      node[above left=of Y_in2]     {$\inp{Y}_b$}
      node[above right=of Y_out1]   {$\outp{Y}_c$}
      node[above left=1pt and -2pt of X1_in1]    {$\inp{X}_{1a}$}
      node[above left=1pt and -2pt of X1_in2]    {$\inp{X}_{1b}$}
      node[above right=1pt and -2pt of X1_out1]  {$\outp{X}_{1c}$}
      node[above left=3pt and -2pt of X2_in1]    {$\inp{X}_{2a}$}
      node[above left=2pt and -2pt of X2_in2]    {$\inp{X}_{2b}$}
      node[above right=1pt and -2pt of X2_out1]  {$\outp{X}_{2c}$}
      node[above right=0pt and -2pt of X2_out2]  {$\outp{X}_{2d}$};
    \draw[ar] (Y_in2') to (X1_in1);
    \draw[ar] (X1_out1) to (X2_in2);
    \draw[ar] (X2_out1) to (Y_out1');
    \draw[ar] let \p1=(X1.north west), \p2=(X1.north east), \n1={\y1+\bby}, \n2=\bbportlen in
      (Y_in1') to (\x1-\n2,\n1) -- (\x2+\n2,\n1) to (X2_in1);
    \draw[ar] let \p1=(X2.south east), \p2=(X1.south west), \n1={\y1-\bby}, \n2=\bbportlen in
      (X2_out2) to[in=0] (\x1+\n2,\n1) -- (\x2-\n2,\n1) to[out=180] (X1_in2);
  \end{tikzpicture}
  \qquad\quad
  \begin{tikzpicture}[x=1cm,y=1ex,node distance=1 and 1,semithick,every label quotes/.style={font=\everymath\expandafter{\the\everymath\scriptstyle}},every to/.style={out=0,in=180},baseline=(current bounding box.center)]
    \node ["$\inp{X}_{1a}$" left] (X1a) {$-$};
    \node [below=0 of X1a, "$\inp{X}_{1b}$" left] (X1b) {$-$};
    \node [below=0 of X1b, "$\outp{X}_{1c}$" left] (X1c) {$+$};
    \node [below=1.5 of X1c, "$\inp{X}_{2a}$" left] (X2a) {$-$};
    \node [below=0 of X2a, "$\inp{X}_{2b}$" left] (X2b) {$-$};
    \node [below=0 of X2b, "$\outp{X}_{2c}$" left] (X2c) {$+$};
    \node [below=0 of X2c, "$\outp{X}_{2d}$" left] (X2d) {$+$};
    \node [below right=1.5 and 2 of X1a, "$\inp{Y}_a$" right] (Ya) {$-$};
    \node [below=1.5 of Ya, "$\inp{Y}_b$" right] (Yb) {$-$};
    \node [below=1.5 of Yb, "$\outp{Y}_c$" right] (Yc) {$+$};
    \draw (X1a) to (Yb);
    \draw (X1b) to[in=0] (X2d);
    \draw (X1c) to[in=0] (X2b);
    \draw (X2a) to (Ya);
    \draw (X2c) to (Yc);
  \end{tikzpicture}
\end{equation*}

There is actually a bit more data in a string (or wiring) diagram for a traced category $\cat{T}$
than in a cobordism. Namely, each input and output of a box must be labeled by an object of
$\cat{T}$ and the wires connecting boxes must respect the labels (e.g.\ in
(\ref{dia:string_diagram}) objects $1c$ and $2b$ must be equal). We will thus consider the operad
$\LCob{\LabSet}$ of oriented 1-dimensional cobordisms over a fixed set of labels $\LabSet$. We also
write $\LCob{\LabSet}$ to denote the corresponding symmetric monoidal category.

In the table below, we record these two interpretations of a string diagram. Note the ``degree
shift'' between the second and third columns.
\begin{center}
  \setlength{\tabcolsep}{10pt}
  \begin{tabular}{lll}
    \toprule
    \multicolumn{3}{c}{Interpretations of string diagrams} \\
    \midrule
    String diagram & Traced category $\cat{T}$ & $\LCob{\LabSet}$ \\
    \midrule
    Wire label set, $\LabSet$ & Objects, $\LabSet\coloneqq\Ob(\cat{T})$ & Label set, $\LabSet$ \\
    Boxes,
    e.g.~\tikz[wiring diagram,bb port sep=1,bby=2.4pt,bb min width=5.5pt,
          bb port length=2pt,bb rounded corners=1pt,baseline=(B.south)]
      {\node[bb={1}{2}] (B) {};}
    & Morphisms in $\cat{T}$& Objects (oriented 0-mfds over $\LabSet$) \\
    String diagrams & Compositions in $\cat{T}$& Morphisms (cobordisms over $\LabSet$) \\
    Nesting & Axioms of traced cats & Composition (of cobordisms) \\
    \bottomrule
  \end{tabular}
\end{center}

In the last row above, each of the seven axioms of traced categories is vacuous from the cobordism
perspective in the sense that both sides of the equation correspond to the same cobordism (up to
diffeomorphism). For example, the axiom of \emph{superposition} reads:
\[
  \Tr^U_{X,Y}\big[f\big]\otimes g=\Tr^U_{X\otimes W,Y\otimes Z}\big[f\otimes g\big]
\]
for every $f\colon U\otimes X\to U\otimes Y$ and $g\colon W\to Z$, or diagramatically:
\[\tikzset{bbx=.8cm,bb port sep=1.5}
  \begin{tikzpicture}[wiring diagram,baseline=(current bounding box.center)]
    \node[bb={2}{2}, bb name=$f$] (X1) {};
    \node[bb port sep=1,bb={1}{1}, below=2 of X1, bb name=$g$] (X2) {};
    \node[bb={1}{1}, fit={(X1) ($(X1.north)+(0,1)$)}, dashed] (Z) {};
    \node[bb={2}{2}, fit={(Z) (X2)}] (Y) {};
    \draw[ar] (Y_in1') to (Z_in1);
    \draw (Z_in1') to (X1_in2);
    \draw[ar] (Y_in2') to (X2_in1);
    \draw (X1_out2) to (Z_out1');
    \draw[ar] (Z_out1) to (Y_out1');
    \draw[ar] (X2_out1) to (Y_out2');
    \draw[ar] let \p1=(X1.north east), \p2=(X2.north west), \n1={\y1+\bby}, \n2=\bbportlen in
      (X1_out1) to[in=0] (\x1+\n2,\n1) -- (\x2-\n2,\n1) to[out=180] (X1_in1);
    \draw[label]
      node[above left=of Y_in1] {$X$}
      node[below left=of Y_in2] {$W$}
      node[above right=of Y_out1] {$Y$}
      node[below right=of Y_out2] {$Z$}
      node[below left=of X1_in1] {$U$}
      node[below left=of X1_in2] {$X$}
      node[below right=of X1_out2] {$Y$}
      node[below right=of X1_out1] {$U$}
      node[below left=of X2_in1] {$W$}
      node[below right=of X2_out1] {$Z$};
  \end{tikzpicture}
  \quad=\quad
  \begin{tikzpicture}[wiring diagram,baseline=(current bounding box.center)]
    \node[bb={2}{2}, bb name=$f$] (X1) {};
    \node[bb port sep=1,bb={1}{1}, below=of X1, bb name=$g$] (X2) {};
    \node[bb={3}{3}, fit={(X1) (X2)}, dashed] (Z) {};
    \node[bb={2}{2}, fit={(Z) ($(Z.north)+(0,1)$)}] (Y) {};
    \draw[ar] (Y_in1') to (Z_in2);
    \draw (Z_in2') to (X1_in2);
    \draw[ar] (Y_in2') to (Z_in3);
    \draw (Z_in3') to (X2_in1);
    \draw (X1_out2) to (Z_out2');
    \draw[ar] (Z_out2) to (Y_out1');
    \draw (X2_out1) to (Z_out3');
    \draw[ar] (Z_out3) to (Y_out2');
    \draw (Z_in1') to (X1_in1);
    \draw (X1_out1) to (Z_out1');
    \draw[ar] let \p1=(Z.north east), \p2=(Z.north west), \n1={\y1+\bby}, \n2=\bbportlen in
      (Z_out1) to[in=0] (\x1+\n2,\n1) -- (\x2-\n2,\n1) to[out=180] (Z_in1);
    \draw[label]
      node[above left=of Y_in1] {$X$}
      node[below left=of Y_in2] {$W$}
      node[above right=of Y_out1] {$Y$}
      node[below right=of Y_out2] {$Z$}
      node[below left=of X1_in1] {$U$}
      node[below left=of X1_in2] {$X$}
      node[below right=of X1_out2] {$Y$}
      node[below right=of X1_out1] {$U$}
      node[below left=of X2_in1] {$W$}
      node[below right=of X2_out1] {$Z$};
  \end{tikzpicture}
\]

To make precise the relationship between these interpretations of string diagrams, we fix the set
$\LabSet$ of labels. Let $\TrCat$ denote the 1-category of traced categories and traced strict
monoidal functors. Write $\TrCat_{\LabSet}$ for the subcategory consisting of those traced
categories $\cat{T}$ for which the monoid of objects is free on the set $\LabSet$, with
identity-on-objects functors $\cat{T}\to\cat{T}'$ between them.

\begin{named}{Theorem 0}
    \label{thm:traced_is_cob_alg}
  There is an equivalence of 1-categories
  \begin{equation}
      \label{eq:single_fiber_tr}
    \LCob{\LabSet}\alg\equiv\TrCat_{\LabSet},
  \end{equation}
  where, given any monoidal category $\cat{M}$, we denote by
  $\cat{M}\alg\coloneqq\ncat{Lax}(\cat{M},\Set)$ the category of lax functors $\cat{M}\to\Set$ and
  monoidal natural transformations.
\end{named}

To build intuition for this statement note that the same data are required, and the same conditions
are satisfied, whether one is specifying a lax functor $P\in\LCob{\LabSet}\alg$ or a traced category
$\cat{T}\in\TrCat_{\LabSet}$ with objects freely generated by the set $\LabSet$. First, for each box
$X=(\inp{X},\outp{X})$ that might appear in a string diagram, both $P\colon\LCob{\LabSet}\to\Set$
and $\cat{T}$ require a set, $P(X)$ and $\Hom_{\cat{T}}(\inp{X},\outp{X})$, respectively. Second,
for each string diagram, both $P$ and $\cat{T}$ require a function: an action on morphisms in the
case of $P$ and a formula for performing the required compositions, tensors, and traces in the case
of $\cat{T}$. The condition that $P$ is functorial corresponds to the fact that $\cat{T}$ satisfies
the axioms of traced categories.

We will briefly specify how to construct a lax functor $P$ from a traced category
$(\cat{T},\otimes,I,\Tr)$ whose objects are freely generated by $\LabSet$. In what follows, we abuse
notation slightly: given a relative set $\iota\colon Z\to\LabSet$ we will use the same symbol $Z$ to
denote the tensor $\bigotimes_{z\in Z}\iota(z)$ in $\cat{T}$. For an oriented 0-manifold
$X=\inp{X}\sqcup \outp{X}$ over $\LabSet$, put $P(X)\coloneqq\Hom_{\cat{T}}(\inp{X},\outp{X})$.
Given a cobordism $\Phi\colon X\to Y$, we need a function $P(\Phi)\colon P(X)\to P(Y)$. To specify
it, note that for any cobordism $\Phi$ there exist $A,B,C,D,E\in\Ob(\cat{T})$ such that
$\inp{X}\cong C\otimes A$, $\outp{X}\cong C\otimes B$, $\inp{Y}\cong A\otimes D$, $\outp{Y}\cong
B\otimes D$, and $E$ is the set of floating loops in $\Phi$; thus $\Phi$ is essentially equivalent
to the cobordism shown on the left side of (\ref{eq:cob_and_trace_pic}).
\begin{equation}
    \label{eq:cob_and_trace_pic}
  \begin{tikzpicture}[
      x=1cm, y=1ex, node distance=1 and 1, semithick,
      every label quotes/.style={font=\everymath\expandafter{\the\everymath\scriptstyle}},
      every to/.style={out=0,in=180},
      baseline=(current bounding box.center)
    ]
    \node ["$A$" left] (X1a) {$-$};
    \node [above=0.25 of X1a] {$X$};
    \node [below=0 of X1a, "$C$" left] (X1b) {$-$};
    \node [below=0 of X1b, "$C$" left] (X2a) {$+$};
    \node [below=0 of X2a, "$B$" left] (X2b) {$+$};
    \node [right=2 of X1a, "$A$" right] (Y1a) {$-$};
    \node [above=0.25 of Y1a] {$Y$};
    \node [below=0 of Y1a, "$D$" right] (Y1b) {$-$};
    \node [below=0 of Y1b, "$D$" right] (Y2a) {$+$};
    \node [below=0 of Y2a, "$B$" right] (Y2b) {$+$};
    \node [right=1.45 of X2a, "$E$" left] {};
    \draw (X1a) to (Y1a);
    \draw (X1b) to[in=0] (X2a);
    \draw (X2b) to (Y2b);
    \draw (Y1b) to[in=180,out=180] (Y2a);
    \draw ($(X1b)+(1.25,-2.75)$) to[in=0] ($(X1b)+(1.25,-0.25)$);
    \draw ($(X1b)+(1.25,-0.25)$) to[in=180,out=180] ($(X1b)+(1.25,-2.75)$);
  \end{tikzpicture}
  \qquad\qquad
  \begin{tikzpicture}[wiring diagram, bby=1.4ex, baseline=(current bounding box.center)]
    \node[bb port sep=1.5, bb={2}{2}] (domain) {$f$};
    \node[bb={2}{2}, fit={(domain) ($(domain.north)+(0,3)$) ($(domain.south)-(0,1)$)}, bb name=$P(\Phi)(f)$] (codomain) {};
    \draw[ar] (codomain_in1') to (domain_in2);
    \draw[ar] (domain_out2) to (codomain_out1');
    \draw[ar] let \p1=(domain.north east), \p2=(domain.north west), \n1={\y1+\bby}, \n2=\bbportlen in
      (domain_out1) to[in=0] (\x1+\n2,\n1) -- (\x2-\n2,\n1) to[out=180] (domain_in1);  
    \draw[ar] let \p1=(domain.south west), \p2=(domain.south east), \n1={\y1-\bby}, \n2=\bbportlen in
      (codomain_in2') to[in=180] (\x1+\n2,\n1) -- (\x2-\n2,\n1) to[out=0] (codomain_out2'); 
    \draw[ar] let \p1=(domain.north east) in
      (\x1+.7*\bbx,\y1+\bby) to[in=0] (\x1+.7*\bbx,\y1+2*\bby) -- (\x1+.6*\bbx,\y1+2*\bby) to[out=180] (\x1+.6*\bbx,\y1+\bby) -- (\x1+.7*\bbx,\y1+\bby);
    \draw[label] let \p1=(domain.north east) in
      node[below left=of codomain_in1]     {$A$}
      node[below left=of codomain_in2]     {$D$}
      node[below right=of codomain_out1]    {$B$}
      node[below right=of codomain_out2]    {$D$}
      node[above left=.6 and 0 of codomain_out1']  {$E$}
      node[below left=of domain_in1]     {$C$}
      node[below left=of domain_in2]     {$A$}
      node[below right=of domain_out2]    {$B$}
      node[below right=of domain_out1]   {$C$};
  \end{tikzpicture}
\end{equation}
With the above notation, for $f\in P(X)$ we can follow the string diagram (right side of
(\ref{eq:cob_and_trace_pic})) and define
\begin{equation}
    \label{eq:cob algebra formula}
  P(\Phi)(f)\coloneqq\Tr_{A,B}^C[f]\otimes D\otimes\Tr^E_{I,I}[E],
\end{equation}
where we abuse notation and write $D$ and $E$ for the identity maps on these objects. One may easily
check, using each axiom of the trace \cite{JoyalStreetVerity} in an essential way, that
\eqref{eq:cob algebra formula} defines an algebra over $\LCob{\LabSet}$. We will not prove
\ref{thm:traced_is_cob_alg} directly as indicated here; to specify our proof strategy we must
introduce more notation.

\section{The main results}
  \label{sec:main_results}

The equivalence \eqref{eq:single_fiber_tr} has two significant conceptual drawbacks. First, the object set
of the traced category $\cat{T}$ is fixed; second, $\cat{T}$ is assumed to be freely generated by some set under tensor
products and functors are assumed to be strict. We refer to this latter condition using the term
\emph{objectwise-free}. Much of the work in this paper goes towards relaxing these two conditions; for now we continue to assume objectwise-freeness. 

To overcome the use of a fixed object set, we first explain what kind of object variance is
appropriate. There is an adjunction
\begin{equation}\label{eqn:FT_UT}
  \Adjoint{\Set}{\TrCat}{\FT}{\UT}
\end{equation}
inducing a monad $\TT$ on $\Set$, which is in fact isomorphic to the free monoid monad. This induces
a canonical functor from $\TrCat$ to the Eilenberg-Moore category $\Set^{\TT}$ of this monad,
sending each traced category to its underlying monoid of objects. We define
$\TrFrObCat\subset\TrCat$ to be the full subcategory spanned by the objectwise-free traced
categories. In other words, $\TrFrObCat$ is the pullback of $\TrCat\to\Set^{\TT}$ along the
fully-faithful functor $\Set_{\TT}\to\Set^{\TT}$ from the Kleisli category to the Eilenberg-Moore category,
or equivalently along the inclusion of free monoids into all monoids.

The compact category $\LCob{\LabSet}$ clearly varies functorially in $\LabSet\in\Set$, but it is not
much harder to see that it is also functorial in $\LabSet\in\Set_{\TT}$.  This gives rise to a functor
\[
  (\LCob{\bullet})\colon\Set_{\TT}\to\CompCat
\]
to the category $\CompCat$ of compact categories and strict functors, sending $\LabSet$ to
$\LCob{\LabSet}$, the free compact category on $\LabSet$ (e.g.\ see \cite{KellyLaplaza,Abramsky2}).
We can compose this with $\Lax(-,\Set)$ to obtain a functor which we denote
\begin{equation}
    \label{eqn:cob/bullet}
  (\LCob{\bullet})\alg\colon\Set_{\TT}^{\mathrm{op}}\too\Cat.
\end{equation}
By applying the Grothendieck construction (denoted by $\int$ here) to (\ref{eqn:cob/bullet}), we
obtain a fibration for which the fiber over a set $\LabSet$ is equivalent (by
\ref{thm:traced_is_cob_alg}) to $\TrCat_{\LabSet}$.
\begin{named}{Theorem A}
    \label{thm:TheoremA_statement}
  There is an equivalence of 1-categories
  \begin{align*}
    &\int^{\LabSet\in\Set_{\TT}}(\LCob{\LabSet})\alg\To{\equiv} \TrFrObCat.
  \end{align*}
\end{named}

This result, together with an analogous statement for compact categories, is proven in Section~\ref{sec:monoids_on_free}.

The fact that the traced categories appearing in \ref{thm:TheoremA_statement} are assumed
objectwise-free and the functors between them are strict is the second of two drawbacks mentioned
above. To address it, we prove that the 2-category $\TTrFrObCat$, of objectwise-free traced categories and strict functors, is
biequivalent to that of arbitrary traced categories and strong functors; see
Corollary~\ref{cor:object_frees}. This result seems to be well-known to experts but is difficult to
find in the literature.

In the course of proving \ref{thm:TheoremA_statement}, we will also establish generalizations
characterizing lax functors out of arbitrary compact categories. In order to state this
characterization, we prove (Theorem~\ref{thm:orthogonal} and Proposition~\ref{prop:CompProf_exact})
that the well-known $(\bo,\ff)$ factorization system of $\Cat$ restricts to a factorization system
on $\TrCat$; more precisely the left class consists of bijective-on-objects functors and the right
class consists of fully faithful functors.

Write $\TrCat^{\bo}$ for the full subcategory of the arrow category $\TrCat^{\to}$ spanned by the
bijective-on-objects functors. The existence of the factorization system implies that the domain
functor
\[
  \dom\colon\TrCat^{\bo}\twoheadrightarrow\TrCat
\]
is a fibration. For a fixed traced category $\cat{T}$, the fiber
$\TrCat^{\bo}_{\cat{T}/}\coloneqq\dom^{-1}(\cat{T})$ is the category of strict monoidal,
bijective-on-objects functors from $\cat{T}$ to another traced category, with the evident
commutative triangles as morphisms. Note that (with $\FT$ as in (\ref{eqn:FT_UT})) we have an isomorphism
$\TrCat_{\LabSet}\iso\TrCat^{\bo}_{(\FT\LabSet)/}$.

Recall from~\cite{JoyalStreetVerity} that traced categories can be thought of as full subcategories
of compact categories: the Int construction applied to a traced category $\cat{T}$ builds the
smallest compact category $\Int(\cat{T})$ of which $\cat{T}$ is a monoidal subcategory, we refer the reader to~\cite{JoyalStreetVerity} for more details. Generalizing
\eqref{eq:single_fiber_tr}, we can give a complete characterization of lax functors out of such
compact categories: for a fixed traced category $\cat{T}$ there is an equivalence of categories
\[
  \Lax(\Int(\cat{T}),\Set)\equiv\TrCat^{\bo}_{\cat{T}/}.
\]
In Section~\ref{sec:exactness_proofs} we show that these equivalences glue together to form an
equivalence of fibrations:

\begin{named}{Theorem B}
    \label{thm:TheoremB_statement}
  There is an equivalence of fibrations
  \begin{equation*}
    \begin{tikzcd}[column sep=-1em]
      \int\limits^{\mathclap{\cat{T}\in\TrCat}} \Lax(\Int(\cat{T}),\Set)
        \ar[rr,"\equiv"] \ar[dr,two heads]
      && \TrCat^{\bo} \ar[dl,two heads,"\dom" pos=.4] \\
      & \TrCat. &
    \end{tikzcd}
  \end{equation*}
\end{named}

\ref{thm:TheoremA_statement} will follow from this by restricting to just those $\cat{T}\in\TrCat$
which are free on a set $\LabSet$.

Our main tool in proving this result will be the 2-categorical notion of \emph{(proarrow)
equipments}, which we recall in Section~\ref{chap:background_equipments}. We will introduce what
appears to be a new definition of \emph{monoidal profunctors}, and the equipment thereof, in
Section~\ref{chap:equipments_monoidal_profunctors}.

\section{Plan of the paper}
Section~\ref{sec:equipments} reviews the definition of an equipment (or framed bicategory
\cite{Shulman}), while Section~\ref{sec:monoids_bimods} recalls monoids and bimodules in an
equipment. We define \emph{exact equipments} in Section~\ref{sec:exactness_boff}, which are central to
our proof strategy, and which we believe to be of independent interest. The material of this section
is original, though some of it appeared in the earlier unpublished \cite{Schultz2015}.

Section~\ref{sec:monoidal,compact,traced} briefly reviews monoidal, traced, and compact categories.
In Section~\ref{sec:monoidal_profunctors} we introduce our main objects of study, namely various equipments of monoidal profunctors
($\dMonProf, \dTrProf$, and $\dCompProf$), which we prove are exact
in Section~\ref{sec:exactness_proofs}. Section~\ref{sec:special_CompProf} is devoted to the special
properties of $\dCompProf$ which are at the core of our results---indeed one might view the rest of
the paper as a formal wrapper around the results in that section---and concludes with the proof of
\ref{thm:TheoremB}. In Section~\ref{sec:monoids_on_free} we deal with the issue of
objectwise-freeness before proving \ref{thm:TheoremA}. Section~\ref{sec:characterization_of_traced}
briefly records an interesting byproduct of the theory developed for the proofs of the main theorems:
a (we believe) new characterization, up to biequivalence, of the 2-category of traced monoidal
categories which makes no mention of a trace operation.

The appendix---Sections~\ref{sec:app_arrowObs}, \ref{sec:strict_vs_strong}, and \ref{sec:ObjectwiseFree}---contains material that is not essential for establishing the main results of the paper. The purpose of the appendix is to prove the
biequivalence between the 2-category $\MMonCatStrong$ of monoidal categories with arbitrary object
set and strong functors, on the one hand, and the 2-category $\MMonFrObCat$ of monoidal categories
with free object set and strict functors, on the other. We do the same for traced and compact categories, all in
Corollary~\ref{cor:object_frees}.

\section*{Acknowledgments}

Thanks go to Steve Awodey and Ed Morehouse for suggesting we formally connect the operad-algebra
picture in \cite{RupelSpivak} to string diagrams in traced categories. We also thank Mike Shulman
for many useful conversations, and Tobias Fritz, Justin Hilburn, Dmitry Vagner, and Christina
Vasilakopoulou for helpful comments on drafts of this paper. Finally, we thank the anonymous referee whose suggestions were extremely helpful in making this paper more readable.

\chapter{Background on equipments}
  \label{chap:background_equipments}

One of the main technical tools used in our proofs of the main theorems is \emph{monoidal
profunctors}, which are profunctors between monoidal categories with extra structure. When
studying monoidal/traced/compact categories together with their corresponding functors and natural
transformations, a bit of 2-category theory is often useful.  Analogously, when studying monoidal profunctors
between such structured categories together with their relation to functors and natural transformations, it is
helpful to make use of the theory of equipments.

In this section we review the material on equipments that will be needed in the sequel.

\section{Equipments}
  \label{sec:equipments}

A double category is a 2-category-like structure involving horizontal and vertical arrows, as well
as 2-cells. An equipment (sometimes called a \emph{proarrow equipment} or \emph{framed bicategory})
is a double category satisfying a certain fibrancy condition. In this section, we will spell this
out and give two relevant examples. An excellent reference is Shulman's paper \cite{Shulman}; see
also \cite{Wood1} and \cite{Wood2}.

\begin{definition}\label{def:double_cat}
  A \emph{double category}%
  \footnote{
    We will use many flavors of category in this paper, and we attempt to use different fonts to distinguish between them. We denote named
    1-categories, monoidal categories, and operads using bold roman letters, e.g.\ $\ncat{Cob}$, and
    unnamed 1-categories with script, e.g.\ $\cat{C}$. For named 2-categories or bicategories we do
    almost the same, but change the font of the first letter to calligraphic, such as
    $\nncat{T}{rCat}$; for unnamed 2-categories we use (unbold) calligraphic, e.g.\ $\ccat{D}$.
    Finally, for double categories we make the first letter blackboard bold, whether named (e.g.,
    $\dProf$) or unnamed (e.g.\ $\dcat{D}$). A minor exception occurs almost immediately, however: two 1-categories appear as part of the structure of a double category $\dcat{D}$, and we denote them as $\dcat{D}_0,\dcat{D}_1$ rather than using script font.
  }
  $\dcat{D}$ consists of the following data:
  \begin{itemize}
    \item A category $\dcat{D}_0$, which we refer to as the \emph{vertical category} of $\dcat{D}$.
      For any two objects $c,d\in\dcat{D}_0$, we will write $\dcat{D}_0(c,d)$ for the set of
      vertical arrows from $c$ to $d$. We refer to objects of $\dcat{D}_0$ as objects of $\dcat{D}$.
    \item A category $\dcat{D}_1$, equipped with two functors
      $\lframe,\rframe\colon\dcat{D}_1\to\dcat{D}_0$, called the \emph{left frame} and \emph{right
      frame} functors. Given an object $M\in\Ob(\dcat{D}_1)$ with $c=\lframe(M)$ and
      $c'=\rframe(M)$, we say that $M$ is a \emph{proarrow} (or \emph{horizontal arrow}) \emph{from
      $c$ to $c'$} and write $M\colon c\tickar c'$. A morphism $\phi\colon M\to N$ in $\dcat{D}_1$
      is called a \emph{2-cell}, and is drawn as follows, where $f=\lframe(\phi)$ and $f'=\rframe(\phi)$:
      \begin{equation} \begin{tikzcd}
          \label{eqn:2cell}
        c \ar[r,tick,"M" domA] \ar[d,"f"']
          & c'\ar[d,"f'"] \\
        d \ar[r,tick,"N"' codA]
          & d'
        \twocellA{\phi}
      \end{tikzcd} \end{equation}
    \item A \emph{unit} functor $\unit\colon\dcat{D}_0\to\dcat{D}_1$, which is a strict section of
      both $\lframe$ and $\rframe$, i.e.\ $\lframe\circ\unit=\id_{\dcat{D}_0}=\rframe\circ\unit$. We
      will often abuse notation by writing $c$ for the unit proarrow $\unit(c)\colon c\tickar c$,
      and similarly for vertical arrows.
    \item A functor $\odot\colon\dcat{D}_1\times_{\dcat{D}_0}\dcat{D}_1\to\dcat{D}_1$, called
      \emph{horizontal composition}, which is weakly associative and unital in the sense that there
      are coherent unitor and associator isomorphisms. See \cite{Shulman} for more details.
   \end{itemize}
  Given a double category $\dcat{D}$ there is a strict 2-category called the \emph{vertical
  2-category}, denoted $\VVer(\dcat{D})$, whose underlying 1-category is $\dcat{D}_0$ and whose
  2-morphisms $f\Rightarrow f'$ are defined to be 2-cells \eqref{eqn:2cell} where $M=\unit(c)$ and
  $N=\unit(d)$ are unit proarrows. There is also a \emph{horizontal bicategory}, denoted
  $\HHor(\dcat{D})$, whose objects and 1-cells are the objects and horizontal arrows of $\dcat{D}$,
  and whose 2-cells are the 2-cells of $\dcat{D}$ of the form \eqref{eqn:2cell} such that $f=\id_c$
  and $f'=\id_{c'}$.

  A \emph{strong double functor} $F\colon\dcat{C}\to\dcat{D}$ consists of functors
  $F_0\colon\dcat{C}_0\to\dcat{D}_0$ and $F_1\colon\dcat{C}_1\to\dcat{D}_1$ commuting with the
  frames $\lframe$,$\rframe$, which preserve the unit $\unit$ and the horizontal composition $\odot$ up
  to coherent isomorphism.
\end{definition}

Recall that a \emph{fibration} of categories $p\colon\cat{E}\to\cat{B}$ is a functor with a lifting
property: for every $f\colon b'\to b$ in $\cat{B}$ and object $e\in\cat{E}$ with $p(e)=b$, there
exists $e'\to e$ over $f$ that is \emph{cartesian}, i.e.\ universal in an appropriate sense. We
denote fibrations of 1-categories using two-headed arrows $\onto$.

\begin{definition}
    \label{def:equipment}
  An \emph{equipment} is a double category $\dcat{D}$ in which the frame functor
  \[
    (\lframe,\rframe)\colon\dcat{D}_1\onto\dcat{D}_0\times\dcat{D}_0
  \]
  is a fibration. Given vertical morphisms $f\colon c\to d$ and $f'\colon c'\to d'$ together with a proarrow $N\colon
  d\tickar d'$, a cartesian morphism $M\to N$ in $\dcat{D}_1$ over $(f,f')$ is a
  2-cell
  \[ \begin{tikzcd}
    c \ar[r,tick, "M" domA] \ar[d,"f"']
      & c'\ar[d,"f'"] \\
    d \ar[r,tick,"N"' codA]
      & d'
    \twocellalt{A}{\tn{cart}}
  \end{tikzcd} \]
  which we call a \emph{cartesian 2-cell}. We refer to $M$ as the \emph{restriction of $N$ along $f$
  and $f'$}, written $M=N(f,f')$.
\end{definition}

For any vertical morphism $f\colon c\to d$ in an equipment $\dcat{D}$, there are two canonical
proarrows $\comp{f}\colon c\tickar d$ and $\conj{f}\colon d\tickar c$, called respectively the
\emph{companion} and the \emph{conjoint} of $f$, defined by restriction:
\begin{equation}
  \label{dia:comapanion conjoint}
  \begin{tikzcd}
    c \ar[r,tick,"\comp{f}" domA] \ar[d,"f"']
    & d \ar[d,equal] \\
    d \ar[r,tick,"\unit(d)"' codA] & d
    \twocellA{\tn{cart}}
  \end{tikzcd}
  \quad
  \begin{tikzcd}
    d \ar[r,tick,"\conj{f}" domA] \ar[d,equal]
    & c \ar[d,"f"] \\
    d \ar[r,tick,"\unit(d)"' codA] & d.
    \twocellA{\tn{cart}}
  \end{tikzcd}
\end{equation}
In \cite{Shulman}, it is shown that all restrictions can be obtained by composing with companions
and conjoints. In particular, $N(f,f')\iso\comp{f}\odot N\odot\conj{f'}$ for any proarrow $N$.
Moreover, $\comp{f}$ and $\conj{f}$ form an adjunction in $\HHor(\dcat{D})$; we denote the unit and counit by:
\begin{equation}\label{eqn:eta_epsilon}
	\eta_f\colon\unit(c)\to\comp{f}\odot\conj{f}
	\qquad\tn{and}\qquad
	\epsilon_f\colon\conj{f}\odot\comp{f}\to\unit(d)
\end{equation}
\erase{Finally, note that the companion and conjoint can be used to convert any 2-cell to a globular 2-cell in four ways (see \cite[\S 4]{Shulman}), two of which we will use later:
\begin{equation}\label{eqn:globularize}
\begin{tikzcd}
  c\ar[r,tick,"M" domA]\ar[d,"f"']&c'\ar[d,"f'"]\\
  d\ar[r,tick,"N"' codA]&d'
  \twocellA{\phi}
\end{tikzcd}
\quad\leftrightarrow\quad
\begin{tikzcd}
  c\ar[r,tick,"M" domA]\ar[d,equal]&c'\ar[r,tick,"\comp{f'}"]&d'\ar[d,equal]\\
  c\ar[r,tick,"\comp{f}"']&d\ar[r,tick,"N"' codA]&d'
  \twocellA{\comp{\phi}}
\end{tikzcd}
\quad\leftrightarrow\quad
\begin{tikzcd}
  d\ar[d,equal]\ar[r,tick,"\conj{f}"]&c\ar[r,tick,"M" domA]& c'\ar[d,equal]\\
  d\ar[r,tick,"N"' codA]&d'\ar[r,tick,"\conj{f'}"']&c'
  \twocellA{\conj{\phi}}
\end{tikzcd}
\end{equation}}

Recall that a pseudo-pullback of a cospan $A_1\To{f_1}A\From{f_2}A_2$ in a 2-category $\ccat{C}$ is a diagram
\begin{equation}
    \label{eqn:pseudopullback}
  \begin{tikzcd}[sep=small]
    X \ar[r,"g_1"] \ar[d,"g_2"'] \ar[rd,phantom,"\overset{\alpha}{\footnotesize\cong}"]
      & A_2 \ar[d,"f_2"] \\
    A_1 \ar[r,"f_1"']
      & A
  \end{tikzcd}
\end{equation}
where the tuple $(X,g_1,g_2,\alpha)$ is universal, up to equivalence, for data of that shape.
Although this definition makes sense for any 2-category $\ccat{C}$, we will use it only in the
special case described in the next paragraph.

Let $\ccat{C}=\CCat$, the 2-category of small categories.  Suppose $f_2$ is a fibration and that the pullback square \eqref{eqn:pseudopullback}
strictly commutes, i.e.\ that $\alpha$ is the identity. It is a standard fact that a strict pullback
of a fibration along an arbitrary functor is a fibration.  Moreover, this strict pullback
is also a pseudo-pullback. The upshot is that in \eqref{eqn:pseudopullback}, $g_2$ is a pseudo-pullback if and only if for any
strict pullback $g_2'$ of $f_2$ along $f_1$, the induced map $g_2\to g_2'$ is an equivalence of
fibrations.

\begin{definition}
    \label{def:local_equivalence}
  An \emph{equipment functor} is a strong double functor between equipments (see Definition~\ref{def:double_cat}). 
  
  An equipment functor $F\colon\dcat{C}\to\dcat{D}$ is called a \emph{local equivalence} if the
  following (strictly commuting) square is a pseudo-pullback of categories:
  \begin{equation} \begin{tikzcd}
      \label{eqn:local_equiv}
    \dcat{C}_1 \ar[r,"F_1"] \ar[d,two heads,"{(\lframe,\rframe)}"'] \ar[dr,phantom,"\lrcorner" very near start]
      & \dcat{D}_1 \ar[d,two heads,"{(\lframe,\rframe)}"] \\
    \dcat{C}_0\times\dcat{C}_0 \ar[r,"F_0\times F_0"']
      & \dcat{D}_0\times\dcat{D}_0.
  \end{tikzcd} \end{equation}
   If moreover $F_0\colon\dcat{C}_0\to\dcat{D}_0$ is fully faithful, we say that $F$ is a
   \emph{fully faithful local equivalence}.
\end{definition}

\begin{remark}
    \label{rem:strict_vs_pseudo_pullback}
  As discussed above, if the square \eqref{eqn:local_equiv} is a strict pullback, it will be a
  pseudo-pullback, and hence a local equivalence. Any local equivalence can thus be replaced by
  an equivalent strict pullback. We will use this fact often; see 
  Definition~\ref{def:induced_locally_equivalent_equipment}.

  Also note that the frame fibration for $\dcat{C}$ is equivalent to a functor
  $\dcat{C}_0\times\dcat{C}_0\to\CCat$, sending $(c,d)$ to
  $\HHor(\dcat{C})(c,d)$ and similarly for $\dcat{D}$. In this language, $F$ is a local equivalence
  if and only if the induced functors $\HHor(\dcat{C})(c,d)\to\HHor(\dcat{D})(F_0(c),F_0(d))$ are
  equivalences of categories for every pair of objects $(c,d)$. The square \eqref{eqn:local_equiv}
  is a strict pullback precisely when these are isomorphisms of categories.
\end{remark}

\begin{definition}
    \label{def:induced_locally_equivalent_equipment}
  Let $\dcat{D}$ be a double category and $F_0\colon\dcat{C}_0\to\dcat{D}_0$ be a functor. A strict
  pullback of the form \eqref{eqn:local_equiv} defines a double category with vertical category $\dcat{C}_0$, which we denote $F_0^*(\dcat{D})$.
  
  If $\dcat{D}$ is an equipment, $F_0^*(\dcat{D})$ will be one as well since fibrations are stable under
  pullback. In this case we call $F_0^*(\dcat{D})$ the \emph{equipment induced by $F_0$}. By
  Remark~\ref{rem:strict_vs_pseudo_pullback}, the induced equipment functor
  $F_0^*(\dcat{D})\to\dcat{D}$ is a local equivalence.
\end{definition}

The structured categories of interest in this paper can be conveniently organized using the notion of
equipments. The prototype for all of these equipments is that of categories, functors, and
profunctors as in the following example.

\begin{example}
    \label{ex:profunctors}
  The equipment $\dProf$ is a double category whose vertical category $\dProf_0=\Cat$ is the
  category of small 1-categories and functors. Given categories $\cat{C},\cat{C}'\in\dProf_0$, a
  proarrow
  \[ \begin{tikzcd}
    \cat{C} \ar[r,tick,"M"] & \cat{C}'
  \end{tikzcd} \]
  in $\dProf_1$ is a profunctor, i.e.\ a functor $M\colon\op{\cat{C}}\times\cat{C}'\to\Set$. The
  left and right frame functors are given by $\lframe(M)=\cat{C}$ and $\rframe(M)=\cat{C}'$. A
  2-cell $\phi$ in $\dProf$, as to the left, denotes a natural transformation, as to the right, in
  \eqref{eqn:Prof2cells}:
  \begin{equation}
      \label{eqn:Prof2cells}
    \begin{tikzcd}
      \cat{C} \ar[r,tick,"M" domA] \ar[d,"F"']
        & \cat{C}'\ar[d,"F'"] \\
      \cat{D} \ar[r,tick,"N"' codA]
        & \cat{D}'
      \twocellA{\phi}
    \end{tikzcd}
    \hspace{.6in}
    \begin{tikzcd}[column sep=.8em, row sep=5ex]
      \op{\cat{C}}\times \cat{C}' \ar[dr,"M"'] \ar[rr,"\op{F}\times F'"]
        & \ar[d,phantom,"\overset{\phi}{\Rightarrow}" near start]
        & \op{\cat{D}}\times \cat{D}'\ar[dl,"N"] \\
      & \Set.
    \end{tikzcd}
  \end{equation}
  The unit functor $\unit\colon\Cat\to\dProf_1$ sends a category $\cat{C}$ to the hom profunctor
  $\Hom_{\cat{C}}\colon\op{\cat{C}}\times\cat{C}\to\Set$. Given two profunctors
  \[ \begin{tikzcd}
    \cat{C} \ar[r,tick,"M"] & \cat{D} \ar[r,tick,"N"] & \cat{E},
  \end{tikzcd} \]
  define the horizontal composition $M\odot N$ on objects $c\in\cat{C}$ and $e\in\cat{E}$ as the
  reflexive coequalizer of the diagram
  \begin{equation} \begin{tikzcd}
      \label{eqn:coendComp}
    \displaystyle\coprod_{d_1,d_2\in\cat{D}} M(c,d_1)\times\cat{D}(d_1,d_2)\times N(d_2,e)
      \ar[r,shift left] \ar[r,shift right]
    & \displaystyle\coprod_{d\in\cat{D}} M(c,d)\times N(d,e)\ar[l]
  \end{tikzcd} \end{equation}
  where the two rightward maps are given by the right and left actions of $\cat{D}$ on $M$ and $N$
  respectively, and the splitting is given by $\id_d\in\cat{D}(d,d)$. Given a profunctor $M\colon\cat{C}\tickar\cat{D}$ there are
  canonical isomorphisms $\Hom_{\cat{C}}\odot M \iso M \iso M\odot\Hom_{\cat{D}}$ which can be
  viewed as giving an action of $\Hom_{\cat{C}}$ and of $\Hom_{\cat{D}}$ on $M$, from the left and
  right respectively.

  At this point we have given $\dProf$ the structure of a double category. To see that $\dProf$ is
  an equipment, note that from a pair of functors $F\colon\cat{C}\to\cat{D}$,
  $F'\colon\cat{C}'\to\cat{D}'$ and a profunctor $N\colon \cat{D}\tickar\cat{D}'$ we may form the
  composite
  \[\begin{tikzcd}
    \op{\cat{C}}\times\cat{C}' \ar[r,"\op{F}\times F'"]
      &[1.5em] \op{\cat{D}}\times\cat{D}' \ar[r,"N"]
      & \Set,
  \end{tikzcd}\]
  denoted $N(F,F')\colon\cat{C}\tickar\cat{C}'$, such that
  \begin{equation}
      \label{eq:cartesian profunctor morphism}
    \begin{tikzcd}
      \cat{C} \ar[r,tick,"{N(F,F')}" domA] \ar[d,"F"']
        & \cat{C}'\ar[d,"F'"] \\
      \cat{D} \ar[r,tick,"N"' codA]
        & \cat{D}'
      \twocellA{\phi}
    \end{tikzcd}
  \end{equation}
  is a cartesian 2-cell. A simple Yoneda lemma argument yields $\VVer(\dProf)\equiv\CCat$.
\end{example}

\begin{remark}
    \label{rmk:profunctor_as_bimodule}
  There is a strong analogy relating profunctors between categories with bimodules between rings.
  Besides being a useful source of intuition, we can also exploit this analogy to provide a
  convenient notation for working with profunctors.

  If $M\colon\op{\cat{C}}\times\cat{D}\to\Set$ is a profunctor, then for any element $m\in M(c,d)$
  and morphisms $f\colon c'\to c$ and $g\colon d\to d'$, we can write $g\cdot m\in M(c,d')$ and
  $m\cdot f\in M(c',d)$ for the elements $M(\id,g)(m)$ and $M(f,\id)(m)$ respectively. Thus we think
  of the functoriality of $M$ as providing left and right actions of $\cat{D}$ and $\cat{C}$ on the
  elements of $M$. The equations $(g\cdot m)\cdot f = g\cdot (m\cdot f)$, $g'\cdot(g\cdot
  m)=(g'\circ g)\cdot m$, and $(m\cdot f)\cdot f'=m\cdot(f\circ f')$ clearly hold whenever they
  make sense.

  The reflexive coequalizer \eqref{eqn:coendComp} can be easily expressed in this notation: the elements of
  $(M\odot N)(c,e)$ are pairs $m\otimes n$ of elements $m\in M(c,d)$ and $n\in N(d,e)$ for some
  $d\in\cat{D}$ modulo the relation $(m\cdot f)\otimes n = m\otimes(f\cdot n)$, for $f\in\cat{D}$.

  Finally, a 2-cell $\phi$ of the form \eqref{eqn:Prof2cells} is function sending elements $m\in
  M(c,c')$ to elements $\phi(m)\in N(Fc,F'c')$ such that the equation $\phi(g\cdot m\cdot
  g)=F(g)\cdot\phi(m)\cdot F'(f)$ holds whenever it makes sense.
\end{remark}

\section{Monoids and bimodules}
  \label{sec:monoids_bimods}

In any equipment $\dcat{D}$, we can consider monads in the horizontal bicategory $\HHor(\dcat{D})$.
In \cite{Schultz2015}, a definition of \emph{exact equipment} was given axiomatizing a class of
equipments in which there is a close connection between monads in $\HHor(\dcat{D})$ and the objects
of $\dcat{D}$. We will review exact equipments and the results needed here in
Section~\ref{sec:exactness_boff}, but first we collect the necessary definitions and results about monads
in $\HHor(\dcat{D})$, which we refer to as \emph{monoids in $\dcat{D}$}.

\begin{definition}
    \label{def:monoids}
  Denote by $\Mon(\dcat{D})$ the category of monoids in $\dcat{D}$. More precisely, the objects are
  \emph{monoids}: 4-tuples $(c,M,i_M,m_M)$ consisting of an object $c$ of $\dcat{D}$ and a proarrow
  $M\colon c\tickar c$, together with unit and multiplication cells
  \begin{equation}
      \label{eqn:unit_and_mult}
    \begin{tikzcd}
      c \ar[r,tick,"c" domA] \ar[d,equal]
        & c \ar[d,equal] \\
      c \ar[r,tick,"M"' codA] & c
      \twocellA{i_M}
    \end{tikzcd}
    \qquad
    \begin{tikzcd}
      c \ar[r,tick,"M"] \ar[d,equal]
        & |[alias=domA]| c \ar[r,tick,"M"]
        & c \ar[d,equal] \\
      c \ar[rr,tick,"M"' codA]
        && c
      \twocellA{m_M}
    \end{tikzcd}
  \end{equation}
  satisfying the evident unit and associativity axioms. The morphisms are \emph{monoid
  homomorphisms}: pairs $(f,\vec{f}\mspace{2mu})$ consisting of a vertical arrow $f\colon c\to d$ in
  $\dcat{D}$ and a cell
  \[ \begin{tikzcd}
    c \ar[r,tick,"M" domA] \ar[d,"f"']
      & c \ar[d,"f"] \\
    d \ar[r,tick,"N"' codA]
      & d
    \twocellA{\vec{f}}
  \end{tikzcd} \]
  which respects the unit and multiplication cells of $M$ and $N$.
\end{definition}

There is an evident forgetful functor $\MOb\colon\Mon(\dcat{D})\to\dcat{D}_0$ sending a monoid
$M\colon c\tickar c$ to its underlying object $|M|\coloneqq c$. The following result is also in \cite{FioreGambinoKock}.

\begin{lemma}
    \label{lemma:Mon_und_fib}
  Let $\dcat{D}$ be an equipment. The forgetful functor $\MOb\colon\Mon(\dcat{D})\to\dcat{D}_0$ is a
  fibration and there is a morphism of fibrations
  \[ \begin{tikzcd}[column sep=tiny]
    \Mon(\dcat{D}) \ar[d,two heads,"\MOb"'] \ar[r]
      & \dcat{D}_1 \ar[d,two heads,"{(\lframe,\rframe)}"] \\
    \dcat{D}_0 \ar[r,"\Delta"']&\dcat{D}_0\times\dcat{D}_0.
  \end{tikzcd} \]
\end{lemma}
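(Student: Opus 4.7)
The plan is to construct cartesian lifts in $\Mon(\dcat{D})$ by restricting the underlying proarrow of a monoid along a vertical morphism. Given a target monoid $(d, N, i_N, m_N)$ and a vertical arrow $f\colon c\to d$, I would set $M \coloneqq N(f,f)$ and let $\alpha\colon M\to N$ be the canonical cartesian 2-cell over $(f,f)$ in $\dcat{D}_1$. The claim is that $M$ carries a unique monoid structure for which $(f,\alpha)$ is a monoid homomorphism, and that the resulting morphism in $\Mon(\dcat{D})$ is a cartesian lift of $f$ along $\MOb$.

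The unit and multiplication on $M$ are defined by invoking the universal property of $\alpha$. Specifically, $i_M\colon \unit(c)\to M$ is the unique 2-cell over $(\id_c,\id_c)$ whose postcomposition with $\alpha$ equals the composite $\unit(c)\xrightarrow{\unit(f)}\unit(d)\xrightarrow{i_N} N$, which has frames $(f,f)$ since $\lframe\circ\unit=\id=\rframe\circ\unit$. Similarly, $m_M\colon M\odot M\to M$ is characterised by $\alpha\circ m_M = m_N\circ(\alpha\odot\alpha)$. The monoid axioms for $(M,i_M,m_M)$ then follow from those of $(N,i_N,m_N)$: both sides of each associativity or unit equation, after postcomposition with $\alpha$, yield the same 2-cell with frames $(f,f)$ by the monoid axioms for $N$, so the uniqueness clause of the cartesian universal property forces the original equalities upstairs.

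Cartesianness of $(f,\alpha)$ in $\Mon(\dcat{D})$ is proven in the same spirit. Given a monoid $(c',M',i_{M'},m_{M'})$ and a monoid homomorphism $(h,\vec{h})\to (d,N,i_N,m_N)$ factoring through $f$ as $h=f\circ g$, the underlying 2-cell $\vec{h}$ factors uniquely as $\vec{h}=\alpha\circ\vec{g}$ for some $\vec{g}\colon M'\to M$ over $(g,g)$ by cartesianness of $\alpha$ in $\dcat{D}_1$; that $\vec{g}$ respects unit and multiplication follows by postcomposing the required identities with $\alpha$, reducing them to identities already known for $\vec{h}$ and $(i_M,m_M)$, and applying uniqueness once more. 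Commutativity of the square in the lemma is immediate since $(\lframe,\rframe)(M)=(|M|,|M|)=\Delta(|M|)$, and the top horizontal functor preserves cartesian morphisms by construction: it sends $(f,\alpha)$ to $\alpha$, which is cartesian in $\dcat{D}_1$ by definition.

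I expect the main obstacle to be purely bookkeeping: tracking the associator and unitor isomorphisms of $\HHor(\dcat{D})$ through the composite $M\odot M \xrightarrow{\alpha\odot\alpha} N\odot N \xrightarrow{m_N} N$ (verifying it really has frames $(f,f)$ given that $\odot$ is only weakly associative and unital), and ensuring the cartesian universal property is applied to data of the correct shape at each step. There is no conceptual subtlety beyond the interplay between horizontal composition and restriction, which is already packaged into the equipment axioms.
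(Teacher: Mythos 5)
Your proposal is correct and follows exactly the paper's approach: the paper's (much terser) proof simply observes that the cartesian restriction 2-cell induces a monoid structure on $N(f,f)$ making it a cartesian lift, which is precisely the argument you have spelled out in detail. The additional verifications you supply (uniqueness of the induced unit and multiplication, the monoid axioms via postcomposition with the cartesian cell, and cartesianness in $\Mon(\dcat{D})$) are the routine checks the paper leaves implicit.
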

\begin{proof}
  Let $f\colon c\to d$ be a vertical morphism of $\dcat{D}$ and $N\colon d\tickar d$ a monoid in
  $\dcat{D}$. Since the 2-cell defining the restriction of $N$ along $f$ is cartesian, there is an
  induced monoid structure on $N(f,f)$ which in particular makes this cartesian 2-cell a monoid
  homomorphism. The result follows.
\end{proof}

\begin{lemma}
    \label{lem:Mon_pullback}
  For a local equivalence $F\colon\dcat{C}\to\dcat{D}$, the induced square
  \[ \begin{tikzcd}[column sep=large]
    \Mon(\dcat{C}) \ar[r,"\Mon(F)"] \ar[d,two heads,"\MOb"'] \ar[dr,phantom,"\lrcorner" very near start]
      & \Mon(\dcat{D}) \ar[d,two heads,"\MOb"] \\
    \dcat{C}_0 \ar[r,"F_0"']
      & \dcat{D}_0
  \end{tikzcd} \]
  is a pseudo-pullback of categories.
\end{lemma}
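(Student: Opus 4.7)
The plan is to reduce the claim to a statement about fibers of $\MOb$ and then identify those fibers with categories of monoids in monoidal categories. Since the square strictly commutes and both vertical functors are fibrations (by Lemma~\ref{lemma:Mon_und_fib}), the discussion following \eqref{eqn:pseudopullback} tells us it suffices to show that the induced functor on fibers
\[
  \Mon(F)_c\colon \MOb^{-1}(c)\too \MOb^{-1}(F_0(c))
\]
is an equivalence of categories for each $c\in\dcat{C}_0$.

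First I would unpack the fibers. By Definition~\ref{def:monoids}, an object of $\MOb^{-1}(c)$ is a proarrow $M\colon c\tickar c$ equipped with unit and multiplication 2-cells \eqref{eqn:unit_and_mult}; morphisms in the fiber are globular 2-cells $M\Rightarrow M'$ respecting these. Since horizontal composition $\odot$ restricts to a functor on $\HHor(\dcat{C})(c,c)$ (with $\unit(c)$ as monoidal unit, via the associator and unitors from Definition~\ref{def:double_cat}), this fiber is precisely the ordinary category of monoids in the monoidal category $\HHor(\dcat{C})(c,c)$, and similarly $\MOb^{-1}(F_0(c))\cong\Mon(\HHor(\dcat{D})(F_0(c),F_0(c)))$.

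Next I would exploit the local equivalence assumption. By Remark~\ref{rem:strict_vs_pseudo_pullback}, the condition that \eqref{eqn:local_equiv} is a pseudo-pullback translates into the statement that for every pair $(c,c')$ the functor
\[
  F_{c,c'}\colon\HHor(\dcat{C})(c,c')\too\HHor(\dcat{D})(F_0(c),F_0(c'))
\]
is an equivalence of categories. Because $F$ is a strong double functor, it comes with coherent isomorphisms $F_1(M\odot N)\iso F_1(M)\odot F_1(N)$ and $F_1(\unit(c))\iso\unit(F_0(c))$, which assemble into a strong monoidal structure on $F_{c,c}$. Thus $F_{c,c}$ is a strong monoidal equivalence between the monoidal categories $\HHor(\dcat{C})(c,c)$ and $\HHor(\dcat{D})(F_0(c),F_0(c))$.

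Finally I would invoke the standard fact that a strong monoidal equivalence of monoidal categories induces an equivalence of their categories of monoid objects: one transports monoid structures through the constraint isomorphisms, and the associativity and unit axioms follow from coherence. Under the identification of fibers above, this is exactly the statement that $\Mon(F)_c$ is an equivalence, which is what we needed. The main subtlety in the proof is really only bookkeeping: one must check that the constraint cells from the strong double functor structure assemble correctly into a strong monoidal functor on the endomorphism monoidal category, after which the result is a direct application of transport of monoid structure along a monoidal equivalence.
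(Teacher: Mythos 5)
Your proof is correct, but it takes a genuinely different route from the paper's. The paper's proof is a two-line reduction: by Remark~\ref{rem:strict_vs_pseudo_pullback} one may replace $F$ by an equivalent local equivalence for which the frame square \eqref{eqn:local_equiv} is a \emph{strict} pullback, and then one checks directly that applying $\Mon$ yields again a strict pullback (a monoid in $\dcat{C}=F_0^*(\dcat{D})$ is literally a monoid in $\dcat{D}$ together with a lift of its underlying object), which is automatically a pseudo-pullback since $\MOb$ is a fibration by Lemma~\ref{lemma:Mon_und_fib}. This sidesteps all coherence bookkeeping. You instead work fiberwise: you identify $\MOb^{-1}(c)$ with the category of monoid objects in the monoidal category $\HHor(\dcat{C})(c,c)$, observe that the strong double functor structure of $F$ makes $F_{c,c}$ a strong monoidal equivalence, and transport monoid structures along it. That argument is sound and arguably more illuminating about \emph{why} the lemma holds, but it carries two standard facts you leave implicit: (i) a morphism of fibrations over a fixed base is an equivalence iff it is a fiberwise equivalence, which requires first knowing that $\Mon(F)$ preserves cartesian arrows (this follows from Lemma~\ref{lemma:Mon_und_fib} together with the fact that a local equivalence preserves cartesian $2$-cells, but deserves a sentence); and (ii) the pseudo-inverse of a strong monoidal equivalence can itself be made strong monoidal, which is what actually powers the ``transport of monoid structure'' step. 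With those two points acknowledged, your argument is a complete alternative proof; the paper's strictification trick simply buys a shorter verification at the cost of being less explicit about the fiberwise content.
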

\begin{proof}
  By Remark~\ref{rem:strict_vs_pseudo_pullback}, we may assume that the pullback \eqref{eqn:local_equiv} in
  Definition~\ref{def:local_equivalence}, realizing $F\colon\dcat{C}\to\dcat{D}$ as a local
  equivalence, is strict. It is then straightforward to check directly that the above square is again
  a strict pullback and hence a pseudo-pullback.
\end{proof}

In all our cases of interest, $\Mon(\dcat{D})$ becomes the vertical part of another equipment. 
The following is a standard construction; see \cite{Shulman}.

\begin{definition}
    \label{def:monoids_and_modules}
  Let $\dcat{D}$ be an equipment with local reflexive coequalizers, i.e.\ such that each 1-category
  $\HHor(\dcat{D})(c,d)$ has reflexive coequalizers and $\odot$ preserves reflexive coequalizers in each variable. The
  equipment $\dMod(\dcat{D})$ of \emph{monoids and bimodules} is defined as follows:
  \begin{itemize}
    \item The vertical category $\dMod(\dcat{D})_0$ is the category $\Mon(\dcat{D})$ of monoids in
      $\dcat{D}$.
    \item The proarrows $B\colon M\tickar N$ are \emph{bimodules}: triples $(B,l_B,r_B)$
      consisting of a proarrow $B\colon c\tickar d$ in $\dcat{D}$ and cells
      \begin{equation*}
        \begin{tikzcd}
          c \ar[r,tick,"M"] \ar[d,equal]
            & |[alias=domA]| c \ar[r,tick,"B"]
            & d \ar[d,equal] \\
          c \ar[rr,tick,"B"' codA]
            && d
          \twocellA{l_B}
        \end{tikzcd}
        \qquad
        \begin{tikzcd}
          c \ar[r,tick,"B"] \ar[d,equal]
            & |[alias=domA]| d \ar[r,tick,"N"]
            & d \ar[d,equal] \\
          c \ar[rr,tick,"B"' codA]
          && d
          \twocellA{r_B}
        \end{tikzcd}
      \end{equation*}
      satisfying evident monoid action axioms.
    \item The horizontal composition $B_1\otimes_{M'} B_2$ of bimodules $B_1\colon M\tickar M'$ and
      $B_2\colon M'\tickar M''$ is given by the reflexive coequalizer in $\HHor(\dcat{D})(M,M'')$
      \[
      \begin{tikzcd}
        B_1\odot M'\odot B_2 \ar[r,shift left]\ar[r,shift right]
        & B_1\odot B_2 \ar[r]\ar[l]
        & B_1\otimes_{M'} B_2
      \end{tikzcd}
      \]
      together with the evident left $M$ and right $M''$ actions. Above, the splitting map $B_1\odot B_2\to B_1\odot M'\odot B_2$ comes from the unit $i_{M'}$ of the monoid.
    \item The 2-cells are \emph{bimodule homomorphisms}: cells in $\dcat{D}$
      \[ \begin{tikzcd}
        c \ar[r,tick,"B"] \ar[d,"f"' domA]
          & d \ar[d,"f'" codA] \\
        c' \ar[r,tick,"B'"']
          & d'
        \twocellA{\phi}
      \end{tikzcd} \]
      which are compatible with the various left and right monoid actions.
  \end{itemize}
  We will write $\Bimod{M}{N}\coloneqq\HHor(\dMod(\dcat{D}))(M,N)$ to denote the 1-category of $(M,N)$-bimodules and bimodule morphisms.
\end{definition}

The forgetful functor $\MOb\colon\Mon(\dcat{D})\to\dcat{D}_0$ extends to a \emph{lax} equipment
functor $\MOb\colon\dMod(\dcat{D})\to\dcat{D}$. We have not defined lax equipment functors---because we do not use them---and in particular we will we not use the equipment version of $\MOb$.

More importantly for our work, there is a local equivalence
$U\colon\dcat{D}\to\dMod(\dcat{D})$ sending $c$ to the unit $c\tickar c$ with the trivial monoid
structure. If $F\colon\dcat{C}\to\dcat{D}$ is an equipment functor, then there is an evident
equipment functor $\dMod(F)\colon\dMod(\dcat{C})\to\dMod(\dcat{D})$. In fact, we have the following
which is immediate from the definitions.

\begin{lemma}
    \label{lemma:FFLE_Mod}
  If $F\colon\dcat{C}\to\dcat{D}$ is a local equivalence, then so is the induced functor
  $\dMod(F)\colon\dMod(\dcat{C})\to\dMod(\dcat{D})$. If $F$ is a fully
  faithful local equivalence, then so is $\dMod(F)$.
\end{lemma}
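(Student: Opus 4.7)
The plan is to unpack what it means for $\dMod(F)$ to be a local equivalence and verify the required equivalence on horizontal hom categories fiber-by-fiber over pairs of monoids. By Remark~\ref{rem:strict_vs_pseudo_pullback}, I may replace $F$ by an equivalent equipment functor arising from a strict pullback, so without loss of generality I can assume $F_1$ induces an \emph{isomorphism} $\HHor(\dcat{C})(c,c')\iso\HHor(\dcat{D})(F_0c,F_0c')$ for each pair $(c,c')$. Since $F$ is a strong double functor that preserves $\unit$ and $\odot$ up to coherent isomorphism, these horizontal hom isomorphisms are compatible with identity proarrows and horizontal composition.

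For the first statement, I would fix a pair of monoids $M_1,M_2\in\Mon(\dcat{C})$ with underlying objects $c_1,c_2$, and show that the induced functor
\[
\Bimod{M_1}{M_2}\too\Bimod{F M_1}{F M_2}
\]
is an equivalence of categories. An $(M_1,M_2)$-bimodule is the data of a proarrow $B\colon c_1\tickar c_2$ plus action 2-cells $l_B,r_B$ satisfying equational axioms. Under the strict-pullback reduction, proarrows $c_1\tickar c_2$ in $\dcat{C}$ biject with proarrows $F c_1\tickar F c_2$ in $\dcat{D}$, and globular 2-cells of each shape biject too. Because $F$ is compatible with $\odot$ and $\unit$, the set of possible left-action cells $l_B\colon M_1\odot B\to B$ corresponds bijectively with the set of possible left $FM_1$-action cells on $FB$, and likewise on the right; and the equational bimodule axioms transfer formally. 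The same argument applied to globular 2-cells verifies that bimodule homomorphisms correspond, giving the desired equivalence (in fact an isomorphism) on hom categories.

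For the second statement I need additionally that $\Mon(F)$ is fully faithful on underlying vertical categories when $F_0$ is fully faithful. A monoid homomorphism $(f,\vec f\mspace{2mu})\colon M_1\to M_2$ in $\dcat{C}$ consists of a vertical arrow $f\colon c_1\to c_2$ in $\dcat{C}_0$ and a 2-cell $\vec f$ over $(f,f)$ compatible with units and multiplications. Under $F$, the underlying arrow is $F_0(f)$, so full-faithfulness of $F_0$ gives a bijection on the choices of $f$. For each such $f$, the (again strict) local equivalence gives a bijection between 2-cells $\vec f$ in $\dcat{C}_1$ and 2-cells in $\dcat{D}_1$ over $(F_0 f, F_0 f)$, and the monoid-homomorphism axioms transfer. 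Combining these two bijections yields full-faithfulness of $\Mon(F)$, which together with Lemma~\ref{lem:Mon_pullback} and the first part gives that $\dMod(F)$ is a fully faithful local equivalence.

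The only potentially delicate point is the bookkeeping between non-globular 2-cells (used to form restrictions) and globular 2-cells (used to form actions and action-compatible morphisms), but this subtlety evaporates once Remark~\ref{rem:strict_vs_pseudo_pullback} is invoked to make the square \eqref{eqn:local_equiv} strict. After that reduction, every piece of bimodule data and every axiom is preserved and reflected on the nose, so the verification is essentially formal—consistent with the author's remark that the lemma is immediate from the definitions.
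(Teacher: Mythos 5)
Your proposal is correct and is exactly the verification the paper has in mind: the paper gives no written proof, asserting the lemma is ``immediate from the definitions,'' and your argument---reduce to a strict pullback via Remark~\ref{rem:strict_vs_pseudo_pullback}, then transport bimodule data, action cells, and (for the second claim) monoid homomorphisms across the fiberwise isomorphisms, using the coherence isomorphisms for $\unit$ and $\odot$---is precisely that unwinding.
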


\section{Exact equipments and $\bo$, $\ff$ factorization}
  \label{sec:exactness_boff}
For many equipments $\dcat{D}$, monoids in $\dcat{D}$ are category-like objects where the vertical
and horizontal arrows in $\dMod(\dcat{D})$ are (generalized or structured) functors and profunctors.
The canonical example is the equipment of spans of sets whose objects, vertical, and horizontal
arrows are sets, functions, and spans respectively. In this case, the equipment of monoids is
equivalent to $\dProf$, which may be easily verified directly (or see \cite{Leinster}).

One natural question is thus: what are monoids in equipments like $\dProf$ whose objects are already
category-like? One might guess they would be higher-dimensional structures like double categories.
However, it turns out that in these cases where $\dcat{D}$ is an equipment of category-like objects,
the equipment $\dMod(\dcat{D})$ of monoids in $\dcat{D}$ is very closely related to $\dcat{D}$
itself. In other words, monoids in an equipment are category-like objects, but taking monoids in an
already category-like equipment doesn't change much.

The article \cite{Schultz2015} proposed definitions of \emph{regular} and \emph{exact} equipments.
For this paper we will only be concerned with exact equipments, which are an attempt to axiomatize
the above property of equipments whose objects are category-like. In particular, given any monoid
$M$ in an exact equipment, there is a corresponding object called the \emph{collapse} of $M$
satisfying a certain vertical universal property. This collapse construction has formal similarities
to the quotient of an equivalence relation, and it is because of this (informal) analogy that the
names regular and exact where chosen. From this perspective, $\dMod(\dcat{D})$ can be thought of as
the exact completion of the equipment $\dcat{D}$.

In this section we recall the definition of an exact equipment from \cite{Schultz2015} and review
some basic properties. In Section~\ref{sec:exactness_proofs} we will prove that the equipments of
interest are exact. This will allow us to use the collapse construction as a crucial step in the connection
between lax monoidal set valued functors---which we view as proarrows in exact equipments---with
(traced or compact) monoidal categories---which will be objects of those exact equipments. We will
also need to make use of fully-faithful/bijective-on-objects factorization systems, which are shown in
this section to exist in any exact equipment. (In \cite{Schultz2015} it is shown that this
factorization system exists in any \emph{regular} equipment, as the regular-epi/mono
factorization system exists in any regular category).

\begin{definition}
    \label{def:embedding}
  Let $M\colon c\tickar c$ be a monoid in an equipment $\dcat{D}$. An \emph{embedding} of $M$ into
  an object $x\in\dcat{D}_0$ is a monoid homomorphism $(f,\vec{f}\mspace{2mu})$ from $M$ to the
  trivial monoid on $x$:
  \[ \begin{tikzcd}
    c \ar[r,tick,"M" domA] \ar[d,"f"']
      & c \ar[d,"f"] \\
    x \ar[r,tick,"x"' codA]
      & x.
    \twocellA{\vec{f}}
  \end{tikzcd} \]
  We will sometimes write an embedding as $(f,\vec{f}\mspace{2mu})\colon(c,M)\to x$, or even just
  $f\colon M\to x$ when clear from context. We will write $\Emb(M,x)$ for the set of embeddings from
  $M$ to $x$. This defines a functor $\Emb\colon\op{\Mon(\dcat{D})}\times\dcat{D}_0\to\Set$.
\end{definition}

\begin{lemma}
    \label{lemma:embed_for_LE}
  Suppose that $F\colon\dcat{C}\to\dcat{D}$ is a local equivalence induced by
  $F_0\colon\dcat{C}_0\to\dcat{D}_0$. Suppose $M\in\Mon(\dcat{C})$ is a monoid and $x\in\dcat{C}_0$
  is an object. For $N=\Mon(F)(M)$ and $y=F_0(x)$ we have a pullback square in $\Set$, natural in
  $M$ and $x$:
  \[ \begin{tikzcd}
    \Emb_{\dcat{C}}(M,x)\ar[r]\ar[d]\ar[rd,phantom,"\lrcorner" very near start]
      & \Emb_{\dcat{D}}(N,y)\ar[d] \\
    \dcat{C}_0(|M|,x)\ar[r]
      & \dcat{D}_0(|N|,y).
  \end{tikzcd} \]
\end{lemma}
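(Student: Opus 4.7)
The plan is to reduce to the strict-pullback formulation of a local equivalence and then exploit the resulting bijection on $2$-cells with prescribed frame. By Remark~\ref{rem:strict_vs_pseudo_pullback}, I may replace $F$ by an equivalent strict pullback, so I assume without loss of generality that $\dcat{C}_1$ is literally the pullback of $(\lframe,\rframe)\colon\dcat{D}_1\onto\dcat{D}_0\times\dcat{D}_0$ along $F_0\times F_0$. In this strict presentation $F$ preserves units and horizontal composites on the nose, and for any vertical arrows $f,f'$ in $\dcat{C}_0$ and any proarrows $M,M'\in\dcat{C}_1$, applying $F_1$ gives a bijection between $2$-cells $M\to M'$ with frame $(f,f')$ and $2$-cells $F_1M\to F_1M'$ with frame $(F_0 f,F_0 f')$.

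To verify the pullback property, fix a cone: an embedding $(g,\vec{g}\mspace{2mu})\colon(|N|,N)\to y$ in $\dcat{D}$ together with a vertical arrow $f\colon|M|\to x$ in $\dcat{C}$ with $F_0(f)=g$. Using $F_1(\unit(x))=\unit(y)$ and $F_1(M)=N$, the $2$-cell $\vec{g}\colon N\to\unit(y)$ has frames $(g,g)=(F_0 f,F_0 f)$, so the bijection above produces a unique $2$-cell $\vec{f}\colon M\to\unit(x)$ in $\dcat{C}_1$ with frames $(f,f)$ whose image under $F_1$ is $\vec{g}$. This is the only possible lift of the cone data to $\Emb_{\dcat{C}}(M,x)$; what remains is to check that $(f,\vec{f}\mspace{2mu})$ really is an embedding.

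That check is immediate from the same bijection: the unit and multiplication axioms for a monoid homomorphism $(f,\vec{f}\mspace{2mu})\colon M\to\unit(x)$ are equalities between $2$-cells in $\dcat{C}_1$ with matching source, target, and frames, so they hold in $\dcat{C}_1$ if and only if they hold after applying $F_1$. Since the monoid structure on $N=\Mon(F)(M)$ is by construction the image of that on $M$ under $F$, and units on $x$ and $y$ correspond strictly, the $F_1$-images of these axioms are precisely the corresponding axioms for $(g,\vec{g}\mspace{2mu})$, which hold by hypothesis. Naturality in $M$ and $x$ then follows from the functoriality of $F_0$ and $F_1$. The main (modest) obstacle is the bookkeeping required to align frames so that the strict-pullback bijection applies; once set up, the verification is mechanical.
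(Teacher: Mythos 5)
Your proof is correct; note that the paper states this lemma without proof, and your argument is precisely the evident one being left to the reader. The hypothesis that $F$ is \emph{induced by} $F_0$ already means (Definition~\ref{def:induced_locally_equivalent_equipment}) that \eqref{eqn:local_equiv} is a strict pullback, so your WLOG reduction is automatic, and the rest — the bijection on $2$-cells with prescribed frames, uniqueness of the lift $\vec{f}$, and reflection of the unit and multiplication axioms because $F_1$ preserves $\unit$ and $\odot$ strictly — is exactly the intended verification.
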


\begin{definition}
  Let $M\colon c\tickar c$ be a monoid in an equipment $\dcat{D}$. A \emph{collapse} of $M$ is
  defined to be a universal embedding of $M$. That is, a collapse of $M$ is an object
  $\Col{M}\in\dcat{D}_0$ together with an embedding
  \[ \begin{tikzcd}
    c \ar[r,tick,"M" domA] \ar[d,"i_M"']
      & c \ar[d,"i_M"] \\
    \Col{M} \ar[r,tick,"\Col{M}"' codA]
      & \Col{M}
    \twocellA{\vec{\imath}_M}
  \end{tikzcd} \]
  such that any other embedding of $M$ factors uniquely through $\vec{\imath}_M$:
  \begin{equation}\label{eqn:universal_embedding}
    \begin{tikzcd}
      c \ar[r,tick,"M" domA] \ar[d,"f"']
        & c \ar[d,"f"] \\
      x \ar[r,tick,"x"']
        & x
      \twocellA{\vec{f}}
    \end{tikzcd}
    \quad = \quad
    \begin{tikzcd}
      c \ar[r,tick,"M" domA] \ar[d,"i_M"']
        & c \ar[d,"i_M"] \\
      \Col{M} \ar[r,tick,"\Col{M}"' {domB,codA}] \ar[d,"\tilde{f}"']
        & \Col{M} \ar[d,"\tilde{f}"] \\
      x \ar[r,tick,"x"' codB]
        & x.
      \twocellA{\vec{\imath}_M}
      \twocellB[pos=.6]{\id_{\tilde{f}}}
    \end{tikzcd}
  \end{equation}
  In other words, $\Col{M}$ represents the functor $\Emb(M,\textrm{--})\colon\dcat{D}_0\to\Set$.
\end{definition}

\begin{example}
    \label{ex:monoid_in_Prof}
  Collapses of monoids always exist in $\dProf$, and this is the prototypical example of collapse.

  Consider a monoid $M\colon \cat{C}\tickar \cat{C}$ in $\dProf$. The unit is a profunctor morphism
  $i\colon\Hom_{\cat{C}}\to M$. So for any $f\colon c\to d$ in $\cat{C}$ there is an element
  $i(f)\in M(c,d)$ such that
  \begin{equation}
      \label{eq:Prof_monoid_unit}
    g\cdot i(f)\cdot h = i(g\circ f\circ h)
  \end{equation}
  whenever this makes sense.

  The multiplication $M\odot M\to M$ is an associative operation assigning to any elements $m_1\in M(c,d)$ and
  $m_2\in M(d,e)$ an element $m_2\bullet m_1\in M(c,e)$ satisfying the
  following equations whenever they make sense:
  \begin{gather}
    (f\cdot m_2)\bullet(m_1\cdot h) = f\cdot(m_2\bullet m_1)\cdot h
      \label{eq:Prof_monoid_A}
    \\ (m_3\cdot g)\bullet m_1 = m_3\bullet(g\cdot m_1)
      \label{eq:Prof_monoid_B}
    \\ m\bullet i(f) = m\cdot f
          \quad\text{and}\quad
      i(g)\bullet m = g\cdot m
      \label{eq:Prof_monoid_C}
  \end{gather}
  Specifically, equations \eqref{eq:Prof_monoid_A} and \eqref{eq:Prof_monoid_B} simply say that
  $\bullet$ is a well-defined morphism $M\odot M\to M$, while \eqref{eq:Prof_monoid_C} says that
  $\bullet$ is unital with respect to $i$.

  The collapse $\Col{M}$ of $M$ is the category with the same objects as $\cat{C}$, with morphisms
  $\Col{M}(c,d)\coloneqq M(c,d)$, and with composition given by $\bullet$. The unit $i$ of $M$ gives
  a functor $i_M\colon\cat{C}\to\Col{M}$.
\end{example}

\begin{remark}
    \label{rem:suffices_for_monoid}
  The equations \eqref{eq:Prof_monoid_unit}--\eqref{eq:Prof_monoid_C} are actually overdetermined.
  It is easy to see that equations \eqref{eq:Prof_monoid_A} and \eqref{eq:Prof_monoid_B} follow from
  \eqref{eq:Prof_monoid_C} and the associativity of $\bullet$. Thus, when proving that
  $\bullet\colon M\odot M\to M$ and $i\colon\Hom_{\cat{C}}\to M$ form a monoid, it suffices to prove
  \eqref{eq:Prof_monoid_unit}, \eqref{eq:Prof_monoid_C}, and associativity of $\bullet$. These
  observations will be used to slightly simplify the proof of
  Proposition~\ref{prop:unit_implies_monoid}.
\end{remark}

\begin{remark}
    \label{rem:canonical_actions}
  For any monoid $M\colon c\tickar c$, the companion $\comp{i}_M\colon c\tickar \Col{M}$ (resp.\ the
  conjoint $\conj{i}_M\colon \Col{M}\tickar c$) of the embedding $i_M\colon c\to\Col{M}$ has the
  structure of a left (resp.\ right) $M$-module. Indeed, the horizontal composition of
  $\vec{\imath}_M$ and the left hand cartesian 2-cell from \eqref{dia:comapanion conjoint} defining
  $\comp{i}_M$ factors uniquely through some $l_{\comp{i}_M}$, as follows:
  \[ \begin{tikzcd}
    c \ar[r,tick,"M" domA] \ar[d,"i_M"']
      & c \ar[r,tick, "\comp{i}_M" domB] \ar[d,"i_M"']
      & \Col{M} \ar[d,equal] \\
    \Col{M} \ar[r,tick,"\Col{M}"' codA] \ar[d,equal]
      & |[alias=domC]| \Col{M} \ar[r,tick,"\Col{M}"' codB]
      & \Col{M} \ar[d,equal]\\
    \Col{M} \ar[rr,tick,"\Col{M}"' codC]
      && \Col{M}
    \twocellA{\vec{\imath}_M}
    \twocellB{\tn{cart}}
    \twocelliso{C}{}
  \end{tikzcd}
  \quad = \quad
  \begin{tikzcd}
    c \ar[r,tick,"M"] \ar[d,equal]
      & |[alias=domA]| c \ar[r,tick,"\comp{i}_M"]
      & \Col{M} \ar[d,equal] \\
    c \ar[rr,tick,"\comp{i}_M"' {codA,domB}] \ar[d,"i_M"']
      && \Col{M} \ar[d,equal]\\
    \Col{M} \ar[rr,tick,"\Col{M}"' codB]
      && \Col{M}.
    \twocellA{l_{\comp{i}_M}}
    \twocellB{\tn{cart}}
  \end{tikzcd}\]
  The right $M$-action on $\conj{i}_M$ is obtained similarly.
\erase{=======
  structure of a left (resp.\ right) $M$-module. Indeed, these are given by globularizing the collapse
  2-cell $\vec{\imath}_M$ as in \eqref{eqn:globularize}:\todo{I don't understand how to construct these diagrams.  It
  seems to me that we should consider the horizontal composition of $\vec{\imath}_M$ with the
  cartesian two-cell defining $\comp{i}_M$ to get a two-cell into $\Col{M}$, which then factors
  through $\comp{i}_M$ using the cartesian property.}
  \[
    \begin{tikzcd}
      c \ar[r,tick,"M"] \ar[d,equal]
        & |[alias=domA]| c \ar[r,tick,"\comp{i}_M"]
        & \Col{M} \ar[d,equal] \\
      c \ar[r,tick,"\comp{i}_M"']
        & |[alias=codA]| \Col{M} \ar[r,tick,"\Col{M}"']
        & \Col{M}
      \twocellA{}
    \end{tikzcd}
    \qquad\qquad
    \begin{tikzcd}
      \Col{M} \ar[r,tick,"\conj{i}_M"] \ar[d,equal]
        & |[alias=domA]| c \ar[r,tick,"M"]
        & c \ar[d,equal] \\
      \Col{M} \ar[r,tick,"\Col{M}"']
        & |[alias=codA]| \Col{M} \ar[r,tick,"\conj{i}_M"']
        & c
      \twocellA{}
    \end{tikzcd}
  \]}
\end{remark}

\begin{lemma}
  Let $M\colon c\tickar c$ and $N\colon d\tickar d$ be monoids in an equipment $\dcat{D}$, and assume
  they admit collapses $\Col{M}$ and $\Col{N}$, respectively. Then restriction induces a functor
  \[
    \HHor(\dcat{D})\big(\Col{M},\Col{N}\big)\to\Bimod{M}{N}.
  \]
\end{lemma}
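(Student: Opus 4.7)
The plan is to define the functor by sending a proarrow $P\colon\Col{M}\tickar\Col{N}$ to its restriction $P(i_M,i_N)\colon c\tickar d$, and to equip this proarrow with a canonical $(M,N)$-bimodule structure coming from the monoid homomorphism data on $\vec{\imath}_M$ and $\vec{\imath}_N$.

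First I would construct the left $M$-action on $P(i_M,i_N)$. Let $\kappa\colon P(i_M,i_N)\to P$ denote the cartesian 2-cell with frame $(i_M,i_N)$ that defines the restriction. Horizontally compose $\vec{\imath}_M$ with $\kappa$, and then post-compose with the unitor isomorphism $\Col{M}\odot P\iso P$, to obtain a 2-cell
\[
  M\odot P(i_M,i_N)\longrightarrow P
\]
with frame $(i_M,i_N)$. By the cartesian property of $\kappa$, this factors uniquely through $\kappa$ as a globular 2-cell $\ell_{P(i_M,i_N)}\colon M\odot P(i_M,i_N)\to P(i_M,i_N)$. The right $N$-action $r_{P(i_M,i_N)}$ is constructed symmetrically using $\vec{\imath}_N$ and the right unitor $P\odot\Col{N}\iso P$. (An equivalent but more concrete description is to use the isomorphism $P(i_M,i_N)\iso\comp{i}_M\odot P\odot\conj{i}_N$ recalled after \eqref{dia:comapanion conjoint}, and to transport the left $M$-action on $\comp{i}_M$ and the right $N$-action on $\conj{i}_N$ from Remark~\ref{rem:canonical_actions} through this isomorphism.)

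Next I would verify the bimodule axioms—unitality, associativity of each action, and commutativity of the two actions. In each case, the two sides of the required equation are globular 2-cells out of some horizontal composite involving $P(i_M,i_N)$, and by the cartesian property of $\kappa$ it suffices to check that they agree after post-composing with $\kappa$. Once translated through $\kappa$, each identity reduces to one of the monoid-homomorphism axioms for $\vec{\imath}_M$ or $\vec{\imath}_N$ (i.e.\ the equations expressing that $\vec{\imath}_M$ respects the unit $i_M$ and multiplication $m_M$), together with the coherence of the horizontal unitors and associator of $\dcat{D}$.

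For the action on morphisms: a 2-cell $\phi\colon P\to P'$ in $\HHor(\dcat{D})(\Col{M},\Col{N})$ is a globular 2-cell, so by functoriality of the restriction operation (equivalently, by the universal property of cartesian 2-cells) it induces a globular 2-cell $\phi(i_M,i_N)\colon P(i_M,i_N)\to P'(i_M,i_N)$. That this is a bimodule homomorphism is again an equation between globular 2-cells with target $P'(i_M,i_N)$, which can be checked after post-composing with $\kappa'$; both sides then reduce to the horizontal composite of $\vec{\imath}_M$, $\phi$, and $\kappa$ (respectively the mirror for the right action), so they coincide. Functoriality in $\phi$ and preservation of identities follow from the functoriality of restriction.

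The only real obstacle is bookkeeping: each verification is a pasting-diagram equality whose two sides agree after appeal to the universal property of $\kappa$ and a single monoid axiom of $M$ or $N$. No deeper ingredient is required, since the whole construction is built by factoring canonical 2-cells through cartesian 2-cells.
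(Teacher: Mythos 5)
Your proposal is correct and follows essentially the same route as the paper: define the functor by restriction along $(i_M,i_N)$, obtain the left and right actions by horizontally composing $\vec{\imath}_M$ (resp.\ $\vec{\imath}_N$) with the cartesian restriction cell and the unitor and then factoring uniquely through the cartesian cell, and check the bimodule axioms and functoriality via the universal property of cartesian 2-cells. The paper's proof is simply a terser version of the same argument.
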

\begin{proof}
  For a proarrow $X\colon\Col{M}\tickar\Col{N}$ of $\dcat{D}$, define $\tilde X\colon c\tickar d$ by
  the cartesian 2-cell
  \begin{equation}
      \label{eq:bimodule}
    \begin{tikzcd}
      c \ar[r,tick, "\tilde X" domA] \ar[d,"i_M"']
        & d\ar[d,"i_N"] \\
      \Col{M} \ar[r,tick,"X"' codA]
        & \Col{N}.
      \twocellA{\tn{cart}}
    \end{tikzcd}
  \end{equation}
  Then a 2-cell $X\Rightarrow Y$ immediately lifts to a 2-cell $\tilde X\Rightarrow \tilde Y$. Since
  \eqref{eq:bimodule} is cartesian, we obtain an equality
  \[ \begin{tikzcd}
    c \ar[r,tick,"M" domA] \ar[d,"i_M"']
      & c \ar[r,tick, "\tilde X" domB] \ar[d,"i_M"']
      & d\ar[d,"i_N"] \\
    \Col{M} \ar[r,tick,"\Col{M}"' codA] \ar[d,equal]
      & |[alias=domC]| \Col{M} \ar[r,tick,"X"' codB]
      & \Col{N} \ar[d,equal] \\
    \Col{M} \ar[rr,tick,"X"' codC]
      && \Col{N}
    \twocellA{\vec{\imath}_M}
    \twocellB{\tn{cart}}
    \twocelliso{C}{}
  \end{tikzcd}
  \quad = \quad
  \begin{tikzcd}
    c \ar[r,tick,"M"] \ar[d,equal]
      & |[alias=domA]| c \ar[r,tick,"\tilde X"]
      & d \ar[d,equal] \\
    c \ar[rr,tick,"\tilde X"' {codA,domB}] \ar[d,"i_M"']
      && d \ar[d,"i_N"]\\
    \Col{M} \ar[rr,tick,"X"' codB]
      && \Col{N}
    \twocellA{l_{\tilde X}}
    \twocellB{\tn{cart}}
  \end{tikzcd}\]
  giving the action of $M$ on $\tilde X$. The action $r_{\tilde X}$ of $N$ on $\tilde X$ is obtained
  similarly, and one easily checks the axioms making $\tilde X$ an $(M,N)$-bimodule.
\end{proof}

\begin{definition}\cite[Proposition 5.4]{Schultz2015}
    \label{def:exact_equipment}
  An equipment $\dcat{D}$ is \emph{exact} if the following hold:
  \begin{enumerate}
    \item every monoid $M\colon c\tickar c$ has a collapse $\Col{M}$ with $\vec{\imath}_M$
      cartesian;
    \item for every pair of monoids $M$ and $N$ the restriction functor
    \begin{equation}
        \label{eqn:exact_Hor_Bimod}
      \HHor(\dcat{D})\big(\Col{M},\Col{N}\big)\To{\raisebox{-1ex}{$\equiv$}}\Bimod{M}{N}
    \end{equation}
    is an equivalence of categories.
  \end{enumerate}
\end{definition}

\begin{remark}
    \label{rmk:exact_equiv_fibrations}
  The restriction functor \eqref{eqn:exact_Hor_Bimod} is clearly natural, giving a natural
  equivalence between pseudo-functors $\op{\Mon(\dcat{D})}\times\op{\Mon(\dcat{D})}\to\CCat$.  
  Equivalently this gives an equivalence of fibrations, the inverse of which gives rise to a strictly-commuting
  pseudo-pullback square
  \begin{equation*}
    \begin{tikzcd}
      \dMod(\dcat{D})_1 \ar[d,two heads,"{(\lframe,\rframe)}"'] \ar[r]
      & \dcat{D}_1 \ar[d,two heads,"{(\lframe,\rframe)}"] \\
      \Mon(\dcat{D})\times\Mon(\dcat{D}) \ar[r,"\Col{\textrm{--}}\times\Col{\textrm{--}}"']
      & \dcat{D}_0\times\dcat{D}_0.
    \end{tikzcd}
  \end{equation*}
  We will show in Proposition~\ref{prop:collapse_local_equivalence} that (under mild hypotheses)
  this preserves horizontal composition, thus defining a double functor and hence a local equivalence.
\end{remark}

\begin{example}
  It was proven in \cite[Proposition~5.2]{Schultz2015} that for any equipment $\dcat{D}$, its
  equipment $\dMod(\dcat{D})$ of monoids and bimodules is exact. Thus $\dProf$ is exact, since there
  is an equivalence $\dProf\cong\dMod(\dSpan)$, where $\dSpan$ is the equipment of spans in $\Set$;
  see \cite{Shulman}.
\end{example}

Exact equipments arising in practice almost always have local reflexive coequalizers, and in this
case it is possible to simplify the definition, as we show in Proposition~\ref{prop:exact_coeqs}.
Recall from Remark~\ref{rem:canonical_actions} the natural $M$-module structures on the companion
$\comp{i}_M\colon c\tickar \Col{M}$ and conjoint $\conj{i}_M\colon \Col{M}\tickar c$ of the collapse
embedding $i_M\colon c\to\Col{M}$. Recall also the notation $\unit(a)$ from Definition~\ref{def:double_cat}, and $\eta,\epsilon$ from \eqref{eqn:eta_epsilon}.

\begin{proposition}
    \label{prop:exact_coeqs}
  Suppose $\dcat{D}$ is an equipment with local reflexive coequalizers which satisfies Condition 1
  of Definition~\ref{def:exact_equipment}. Then $\dcat{D}$ satisfies Condition 2 if and only if for
  every monoid $M\colon c\tickar c$, the following diagram is a reflexive coequalizer in
  $\HHor(\dcat{D})(\Col{M},\Col{M})$:
  \begin{equation}
      \label{eqn:exact_coeq}
    \begin{tikzcd}
      \conj{i}_M\odot\comp{i}_M\odot\conj{i}_M\odot\comp{i}_M
        \ar[r,shift left=.32cm,"\epsilon_{i_M}\odot\conj{i}_M\odot\comp{i}_M"]
        \ar[r,shift right=.32cm,"\conj{i}_M\odot\comp{i}_M\odot\epsilon_{i_M}"']
      &[3.7em] \conj{i}_M\odot\comp{i}_M
        \ar[r,"\epsilon_{i_M}"]
        \ar[l,"\conj{i}_M\odot\eta_{i_M}\odot\comp{i}_M"{description,inner sep=1.5pt}]
      & \unit\Col{M}
    \end{tikzcd}
  \end{equation}
  or, equivalently, $\conj{i}_M\otimes_M\comp{i}_M\iso\unit\Col{M}$.
\end{proposition}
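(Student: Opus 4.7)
The plan is to construct a candidate inverse $L$ to the restriction functor $R\colon\HHor(\dcat{D})(\Col{M},\Col{N})\to\Bimod{M}{N}$ and to analyze the two composites, deducing the coequalizer condition from them. First I record two consequences of Condition 1. Since $\vec{\imath}_M$ is cartesian, $\unit\Col{M}(i_M,i_M)\iso\comp{i}_M\odot\conj{i}_M$ is canonically identified with $M$ as a monoid in $\HHor(\dcat{D})(c,c)$ (the monoid structure matches because $\vec{\imath}_M$ is a monoid homomorphism). Under this identification, the canonical $M$-actions on $\comp{i}_M$ and $\conj{i}_M$ from Remark~\ref{rem:canonical_actions} become the whiskered counits $\comp{i}_M\odot\epsilon_{i_M}$ and $\epsilon_{i_M}\odot\conj{i}_M$ respectively. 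I then define
\[
  L\colon\Bimod{M}{N}\to\HHor(\dcat{D})(\Col{M},\Col{N}),\qquad L(B)\coloneqq\conj{i}_M\otimes_M B\otimes_N\comp{i}_N,
\]
which is well-defined by local reflexive coequalizers.

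The key step consists of two natural isomorphisms, both flowing from the fact that horizontal composition preserves reflexive coequalizers in each variable (so $\odot$ commutes with $\otimes_M$ and $\otimes_N$):
\[
  R(L(B))\iso(\comp{i}_M\odot\conj{i}_M)\otimes_M B\otimes_N(\comp{i}_N\odot\conj{i}_N)\iso M\otimes_M B\otimes_N N\iso B,
\]
and
\[
  L(R(X))\iso(\conj{i}_M\otimes_M\comp{i}_M)\odot X\odot(\conj{i}_N\otimes_N\comp{i}_N).
\]
The equivalence of the two formulations of the condition is then essentially tautological: by the triangle identities for the adjunction $\comp{i}_M\dashv\conj{i}_M$, the counit $\epsilon_{i_M}$ coequalizes the parallel pair in \eqref{eqn:exact_coeq} (with reflexive splitting $\conj{i}_M\odot\eta_{i_M}\odot\comp{i}_M$), so the diagram is a reflexive coequalizer precisely when the induced comparison map $\conj{i}_M\otimes_M\comp{i}_M\to\unit\Col{M}$ is an isomorphism.

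For the ($\Leftarrow$) direction, if $\conj{i}_M\otimes_M\comp{i}_M\iso\unit\Col{M}$ for every monoid $M$, the second computation gives $L(R(X))\iso X$, and combined with $R(L(B))\iso B$ this establishes $R$ as an equivalence with quasi-inverse $L$, which is Condition 2. For the ($\Rightarrow$) direction, assume $R$ is an equivalence with some quasi-inverse $R^{-1}$. The natural isomorphism $R(L(B))\iso B$ forces $L(B)\iso R^{-1}(R(L(B)))\iso R^{-1}(B)$, so $L\iso R^{-1}$ as functors, giving $L(R(X))\iso X$ for all $X$. Applying this to $M=N$ and $X=\unit\Col{M}$, and using both $R(\unit\Col{M})\iso\comp{i}_M\odot\conj{i}_M\iso M$ (Condition 1) and $\conj{i}_M\otimes_M M\iso\conj{i}_M$, I obtain
\[
  \conj{i}_M\otimes_M\comp{i}_M\iso L(M)\iso L(R(\unit\Col{M}))\iso\unit\Col{M}.
\]

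The main obstacle is the bookkeeping in the isomorphism chains of the second paragraph, namely verifying that $\odot$ commutes with $\otimes_M$ (which follows formally from local reflexive coequalizers, since each tensor is defined as such a coequalizer and $\odot$ preserves them) and that the $(M,N)$-bimodule structures on $R(X)$ and on the iterated composite agree with the expected actions under the identification $M\iso\comp{i}_M\odot\conj{i}_M$. Both are routine once set up carefully, but the coherence is what ties the two natural isomorphisms to the correct module structures on either side.
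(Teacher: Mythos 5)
Your proof is correct and follows essentially the same route as the paper's: both directions rest on the same two isomorphism chains $R(L(B))\iso M\otimes_M B\otimes_N N\iso B$ and $L(R(X))\iso(\conj{i}_M\otimes_M\comp{i}_M)\odot X\odot(\conj{i}_N\otimes_N\comp{i}_N)$, obtained by commuting $\odot$ past the reflexive coequalizers defining $\otimes$. The only cosmetic difference is in the forward direction, where you evaluate $LR$ at $\unit\Col{M}$ using $L\iso R^{-1}$, while the paper applies $R$ to the comparison map and invokes conservativity of the equivalence; these are mirror images of the same computation.
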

\begin{proof}
  By Condition~1 of Definition~\ref{def:exact_equipment}, we have $M\iso\comp{i}_M\odot\conj{i}_M$,
  so the final equivalence is just the definition of horizontal composition in $\dMod(\dcat{D})$;
  see Definition~\ref{def:monoids_and_modules} and Remark~\ref{rem:canonical_actions}. Below we will
  use the fact that $\otimes$ is defined as a reflexive
  coequalizer, and that, by definition of $\dcat{D}$ having local reflexive coequalizers, $\odot$
  preserves reflexive coequalizers in each variable. Finally, note that the restriction functor \eqref{eqn:exact_Hor_Bimod} is isomorphic
  to the functor $B\mapsto\comp{i}_M\odot B\odot\conj{i}_N$, with the left and right actions given
  by the left action of $M$ on $\comp{i}_M$ and right action of $N$ on $\conj{i}_N$.

  Assuming $\conj{i}_M\otimes_M\comp{i}_M\iso\unit\Col{M}$, we can construct an inverse to this
  restriction functor, sending an $(M,N)$-bimodule $B$ to $\conj{i}_M\otimes_M
  B\otimes_N\comp{i}_N$. It is easy to check that this gives an equivalence of categories:
  \begin{align*}
    \conj{i}_M\otimes_M (\comp{i}_M\odot B\odot\conj{i}_N)\otimes_N\comp{i}_N
      &\iso (\conj{i}_M\otimes_M\comp{i}_M)\odot B\odot (\conj{i}_N\otimes_N\comp{i}_N) \\
      &\iso \unit\Col{M}\odot B\odot \unit\Col{N} \\
      &\iso B \\
    \intertext{and}
    \comp{i}_M\odot (\conj{i}_M\otimes_M B\otimes_N\comp{i}_N)\odot \conj{i}_N
      &\iso (\comp{i}_M\odot\conj{i}_M)\otimes_M B\otimes_N (\comp{i}_N\odot\conj{i}_N) \\
      &\iso M\otimes_M B\otimes_N N \\
      &\iso B.
  \end{align*}

  Conversely, assuming the functor \eqref{eqn:exact_Hor_Bimod} is an equivalence of categories, then
  we can prove that $\conj{i}_M\otimes_M\comp{i}_M\iso\unit\Col{M}$ is an isomorphism by first
  applying the restriction functor:
  \begin{align*}
    \comp{i}_M\odot(\conj{i}_M\otimes_M\comp{i}_M)\odot\conj{i}_M
      &\iso (\comp{i}_M\odot\conj{i}_M)\otimes_M(\comp{i}_M\odot\conj{i}_M) \\
      &\iso M\otimes_M M \\
      &\iso M \\
      &\iso \comp{i}_M\odot\conj{i}_M \\
      &\iso \comp{i}_M\odot\unit\Col{M}\odot\conj{i}_M. \qedhere
  \end{align*}
\end{proof}

\begin{proposition}
    \label{prop:collapse_local_equivalence}
  If $\dcat{D}$ is an exact equipment with local reflexive coequalizers, then collapse induces an equipment
  functor $\ColDash\colon\dMod(\dcat{D})\to\dcat{D}$ which is a local equivalence.
\end{proposition}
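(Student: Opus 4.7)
The plan is to build the equipment functor $\ColDash$ piece-by-piece and then reduce the local-equivalence statement to the exactness hypothesis (Condition~2) via Remark~\ref{rem:strict_vs_pseudo_pullback}.

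First I would define $\ColDash$ on all four types of data. On objects, Condition~1 of Definition~\ref{def:exact_equipment} already gives $M\mapsto\Col{M}$. On vertical arrows (monoid homomorphisms $(f,\vec f)\colon M\to N$), composing $\vec f$ with $\vec{\imath}_N$ yields an embedding of $M$ into $\Col{N}$, and the universal property \eqref{eqn:universal_embedding} factors this uniquely through $\vec{\imath}_M$, producing a vertical arrow $\ColDash(f)\colon\Col{M}\to\Col{N}$; functoriality is immediate from uniqueness. On proarrows (bimodules $B\colon M\tickar N$), I would follow the formula arising from the proof of Proposition~\ref{prop:exact_coeqs} and set
\[\ColDash(B)\coloneqq\conj{i}_M\otimes_M B\otimes_N\comp{i}_N,\]
which is a $(\unit\Col{M},\unit\Col{N})$-bimodule, i.e., a proarrow $\Col{M}\tickar\Col{N}$; on 2-cells we extend by functoriality of $\otimes$. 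That these proarrows and 2-cells have the right frames follows because $\conj{i}_M,\comp{i}_N$ are horizontal arrows $\Col{M}\tickar c$ and $d\tickar\Col{N}$.

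Next I would verify the strong double functor axioms. Preservation of the unit proarrow amounts to showing
\[\ColDash(\unit_M)=\conj{i}_M\otimes_M M\otimes_M\comp{i}_M\iso\conj{i}_M\otimes_M\comp{i}_M\iso\unit\Col{M},\]
the first isomorphism being that $M$ is the unit in $\Bimod{M}{M}$, the second being Proposition~\ref{prop:exact_coeqs}. The main obstacle is preservation of horizontal composition: for $B_1\colon M\tickar M'$ and $B_2\colon M'\tickar M''$ I need a coherent isomorphism $\ColDash(B_1\otimes_{M'}B_2)\iso\ColDash(B_1)\odot\ColDash(B_2)$. The chain of isomorphisms
\[\ColDash(B_1)\odot\ColDash(B_2)\iso\conj{i}_M\otimes_M B_1\otimes_{M'}(\comp{i}_{M'}\odot\conj{i}_{M'})\otimes_{M'}B_2\otimes_{M''}\comp{i}_{M''}\]
uses that $\odot$ preserves the reflexive coequalizers defining $\otimes$ in each variable (the local reflexive coequalizers hypothesis). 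Since the collapse 2-cell $\vec{\imath}_{M'}$ is cartesian by Condition~1, Shulman's identification $N(f,f')\iso\comp{f}\odot N\odot\conj{f'}$ gives $M'\iso\comp{i}_{M'}\odot\conj{i}_{M'}$, so the middle expression collapses to $\conj{i}_M\otimes_M(B_1\otimes_{M'}B_2)\otimes_{M''}\comp{i}_{M''}=\ColDash(B_1\otimes_{M'}B_2)$. Coherence of these iso's with associators and unitors follows from the universal properties of the reflexive coequalizers involved.

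Finally, to see that $\ColDash$ is a local equivalence, I would invoke Remark~\ref{rem:strict_vs_pseudo_pullback}: it suffices to check that for every pair of monoids $M,N$ the induced functor
\[\Bimod{M}{N}=\HHor(\dMod(\dcat{D}))(M,N)\too\HHor(\dcat{D})(\Col{M},\Col{N})\]
is an equivalence of categories. But this functor is precisely (a two-sided version of) the inverse to the restriction functor \eqref{eqn:exact_Hor_Bimod} constructed in the proof of Proposition~\ref{prop:exact_coeqs}, and the computation there establishes the unit and counit iso's from Condition~2 of exactness. Thus $\ColDash$ is a local equivalence, completing the proof. The conceptually delicate step is the horizontal-composition calculation, which is where \emph{both} axioms of exactness and the local coequalizer hypothesis interact; everything else is formal from the universal properties already established.
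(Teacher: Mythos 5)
Your proposal is correct and takes essentially the same route as the paper: the paper defines $\ColDash$ on proarrows as the inverse of the restriction functor from Remark~\ref{rmk:exact_equiv_fibrations} (i.e.\ $B\mapsto\conj{i}_M\otimes_M B\otimes_N\comp{i}_N$) and checks the strong double functor axioms ``using the method of the proof of Proposition~\ref{prop:exact_coeqs}'', which is exactly the unit and horizontal-composition computation you spell out. You have simply filled in the details that the paper leaves as a sketch.
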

\begin{proof}
  It is easy to use the universal property of collapse to construct, from any monoid homomorphism
  $(f,\vec{f}\mspace{2mu})\colon(c,M)\to(d,N)$, a vertical morphism $\Col{f}\colon\Col{M}\to\Col{N}$
  in $\dcat{D}$, thus defining a functor $\Mon(\dcat{D})\to\dcat{D}_0$.

  The functor $\ColDash$ is defined on horizontal arrows and 2-cells as in
  Remark~\ref{rmk:exact_equiv_fibrations}. It is straightforward to verify that this is a strong
  double functor, and hence a local equivalence, using the method of the proof of
  Proposition~\ref{prop:exact_coeqs}.
\end{proof}

With these definitions in place we can now introduce two distinguished classes of vertical morphisms
in an equipment $\dcat{D}$. When $\dcat{D}$ is exact, these will become the left and right classes in an orthogonal
factorization system on $\VVer(\dcat{D})$.

\begin{definition}\cite[Definitions~4.3~and~4.5]{Schultz2015}
    \label{def:boff}
  Let $\dcat{D}$ be an equipment and $f\colon c\to d$ a vertical morphism of $\dcat{D}$. Consider
  the restriction square and unit square shown below:
  \begin{equation*}
    \begin{tikzcd}
      c \ar[r,tick,"{d(f,f)}" domA] \ar[d,"f"']
      & c \ar[d,"f"]
      \\
      d \ar[r,tick,"d"' codA]
      & d
      \twocellA{\tn{cart}}
    \end{tikzcd}
    \qquad\qquad
    \begin{tikzcd}
      c \ar[r,tick,"c" domA] \ar[d,"f"']
      & c \ar[d,"f"]
      \\
      d \ar[r,tick,"d"' codA]
      & d
      \twocellA{\vec{\id_f}}
    \end{tikzcd}
  \end{equation*}
  We say that $f$ is $\bo$ if the restriction square, where $d(f,f)$ has the induced monoid
  structure, is a collapse. We say that $f$ is $\ff$ if the unit square is cartesian.
\end{definition}

In Section~\ref{sec:monoidal_profunctors} we will define equipments of profunctors on monoidal
categories, and we will verify their exactness directly in Section~\ref{sec:exactness_proofs}. The key
ingredient in verifying that the equipment of traced profunctors is exact will be orthogonal
factorization systems. Thus we briefly recall the notion of orthogonal factorization systems for
1-categories and strict 2-categories. Additional background on orthogonal factorization systems can
be found in \cite[Chapter 5.5]{BorceuxV1}. The main result below is that exact equipments admit
orthogonal factorization systems.

\begin{definition}
    \label{def:orthogonal}
  Let $\cat{V}$ be either $\Set$ or $\Cat$, and suppose that $\cat{C}$ is a $\cat{V}$-enriched
  category. An \emph{orthogonal factorization system in $\cat{C}$} consists of two distinguished
  classes of morphisms, $(\cat{L},\cat{R})$, with the following properties:
  \begin{itemize}
    \item Each morphism $f\in\cat{C}$ factors as $f=e\circ m$, where $m\in\cat{L}$ and
      $e\in\cat{R}$.
    \item If $m\colon a\to b$ in $\cat{L}$ and $e\colon c\to d$ in $\cat{R}$, then the left-hand
      square below is a pullback in $\cat{V}$:
      \begin{equation}
          \label{eqn:OrthFactSys}
        \begin{tikzcd}
          \cat{C}(b,c)\ar[r]\ar[d]\ar[rd,phantom,"\lrcorner" very near start]&\cat{C}(a,c)\ar[d,"e\circ -"]\\
          \cat{C}(b,d)\ar[r,"-\circ m"']&\cat{C}(a,d)
        \end{tikzcd}
        \hspace{.9in}
        \begin{tikzcd}
          a\ar[r,"\forall"]\ar[d,two heads,"m"']&c\ar[d,hook, "e"]\\
          b\ar[r,"\forall"']\ar[ur,dashed,"\exists!"]&d
        \end{tikzcd}
      \end{equation}
      In particular, for all solid arrow squares, as in the right-hand diagram, there exists a
      unique diagonal filler. We say that $m$ is ``left-orthogonal'' to $e$, or that $e$ is
      ``right-orthogonal'' to $m$, and denote this relation as $m\boxslash e$.
    \item If $m\boxslash e$ for all $e\in\cat{R}$, then $m\in\cat{L}$. Likewise, if $m\boxslash e$
      for all $m\in\cat{L}$, then $e\in\cat{R}$.
  \end{itemize}
  As shown, we often indicate morphisms in $\cat{L}$ using a two-headed arrow and morphisms in
  $\cat{R}$ using a hooked arrow.%
  \footnote{
    We sometimes also use the two-headed arrow symbol $\twoheadrightarrow$
    to indicate fibrations of categories (e.g.\ as we did in \ref{thm:TheoremB} or
    when defining the frame fibration for equipments, Definition~\ref{def:equipment}). Whether
    we mean a $\bo$ map in an equipment or a fibration of categories should be clear from context.
  }
\end{definition}

\begin{theorem}\cite[Theorem~4.17]{Schultz2015}
    \label{thm:orthogonal}
  If an equipment $\dcat{D}$ is exact, then the vertical 2-category $\VVer(\dcat{D})$ admits a
  2-orthogonal factorization system $(\bo,\ff)$ as in Definition~\ref{def:boff}. In particular,
  there is an orthogonal factorization system $(\bo,\ff)$ on the vertical 1-category $\dcat{D}_0$.
\end{theorem}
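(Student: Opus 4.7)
The plan is to construct the factorization explicitly from the collapse construction and then verify orthogonality using both exactness conditions. Given a vertical morphism $f\colon c\to d$, I would first set $M_f \coloneqq \unit(d)(f,f)$, the restriction of the unit proarrow on $d$ along $f$ on both sides. By functoriality of restriction (Lemma~\ref{lemma:Mon_und_fib}), $M_f$ inherits a canonical monoid structure from $\unit(d)$. Exactness Condition~1 yields a collapse with cartesian embedding $\vec{\imath}_{M_f}$; denote its vertical component by $m_f\colon c\to \Col{M_f}$. The universal property \eqref{eqn:universal_embedding} applied to the tautological monoid embedding $M_f \to d$ given by the cartesian cell defining $M_f$ produces $e_f\colon \Col{M_f}\to d$ with $e_f\circ m_f = f$.

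By construction, $m_f$ is $\bo$: the cartesian cell $\vec{\imath}_{M_f}$ is a collapse of $M_f \iso \unit(\Col{M_f})(m_f, m_f)$. To check that $e_f$ is $\ff$, I would examine the comparison $\phi\colon \unit(\Col{M_f})\to \unit(d)(e_f, e_f)$ obtained by factoring the unit 2-cell $\vec{\id}_{e_f}$ through the cartesian cell defining $\unit(d)(e_f, e_f)$. By exactness Condition~2, restriction along $(m_f, m_f)$ is an equivalence of categories $\HHor(\dcat{D})(\Col{M_f}, \Col{M_f}) \equiv \Bimod{M_f}{M_f}$, and therefore reflects isomorphisms. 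On restrictions, $\phi(m_f, m_f)$ is an isomorphism since both sides are canonically identified with $M_f$, so $\phi$ itself is an isomorphism and $e_f$ is $\ff$.

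For orthogonality, given $m\colon a\to b$ in $\bo$, $e\colon c\to d$ in $\ff$, and a commutative square $(g, h)$ as in \eqref{eqn:OrthFactSys}, I would construct the diagonal filler as follows. Functoriality of restriction applied to $\vec{\id}_h\colon \unit(b)\to\unit(d)$ along $(m,m)$ produces a 2-cell $\unit(b)(m,m) \to \unit(d)(eg, eg) \iso \unit(c)(g,g)$, where the isomorphism uses that $e$ is $\ff$. Composing with the cartesian cell $\unit(c)(g,g)\to \unit(c)$ assembles into a monoid embedding $\unit(b)(m,m) \to c$ with vertical component $g$. Since $m$ is $\bo$, so that $b$ represents the collapse of $\unit(b)(m,m)$, this embedding descends to a unique vertical morphism $k\colon b\to c$, which one checks makes both triangles commute. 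The 2-dimensional strengthening---that \eqref{eqn:OrthFactSys} is a pullback in $\Cat$ rather than just $\Set$---follows by applying the same argument to 2-cells using the equivalence of hom-categories in Condition~2.

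The main obstacle I expect is the 2-categorical bookkeeping required to realize \eqref{eqn:OrthFactSys} as a pullback in $\Cat$. Specifically, verifying that a 2-cell $\alpha\colon k_1\Rightarrow k_2$ between two candidate fillers corresponds uniquely to a bimodule morphism compatible with the data $(g,h)$ requires careful tracking of cartesian 2-cells and of how collapse interacts with 2-cells. Once these compatibilities are established, the result follows by formally iterating the 1-dimensional argument, as in \cite[Theorem~4.17]{Schultz2015}.
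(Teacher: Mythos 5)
The first thing to note is that the paper does not prove Theorem~\ref{thm:orthogonal} at all: it is imported verbatim from \cite[Theorem~4.17]{Schultz2015}, so there is no in-paper argument to measure your proposal against. Judged on its own terms, your reconstruction follows what is surely the intended route: factor $f$ through the collapse of the monoid $M_f=\unit(d)(f,f)$, observe that the first leg $m_f$ is $\bo$ essentially by Definition~\ref{def:boff} together with the cartesianness of $\vec{\imath}_{M_f}$ from Condition~1, use Condition~2 of Definition~\ref{def:exact_equipment} to see that the second leg $e_f$ is $\ff$, and produce diagonal fillers by feeding a constructed embedding of $\unit(b)(m,m)$ into the universal property \eqref{eqn:universal_embedding}. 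The 1-categorical skeleton is sound, and the lifting argument---including uniqueness of the filler, which rests on $\vec{\id}_e$ being cartesian---is correctly assembled.

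Three places need shoring up. (i) In the proof that $e_f$ is $\ff$, the phrase ``both sides are canonically identified with $M_f$'' does not by itself make $\phi(m_f,m_f)$ an isomorphism; a map between isomorphic objects need not be invertible. You must check that under those identifications $\phi(m_f,m_f)$ \emph{is} the canonical map, which follows from uniqueness of factorizations through the cartesian cell $\unit(d)(f,f)\to\unit(d)$ together with the defining equation expressing the tautological embedding as $\vec{\id}_{e_f}$ pasted onto $\vec{\imath}_{M_f}$. One line, but a necessary one. (ii) You verify factorization and orthogonality but not the third clause of Definition~\ref{def:orthogonal}, namely that $\bo$ and $\ff$ are each exactly the orthogonal complement of the other; this is the standard retract argument, and it additionally requires checking from Definition~\ref{def:boff} that isomorphisms lie in both classes. (iii) The genuinely nontrivial residue is the 2-dimensional clause. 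The collapse is defined in this paper by a purely 1-dimensional universal property---it represents the $\Set$-valued functor $\Emb(M,\textrm{--})$---so ``apply the same argument to 2-cells'' is not yet available: you would first have to upgrade the collapse to a 2-dimensional universal property (or argue, e.g.\ via Condition~2, that $\Emb(M,\textrm{--})$ lifts to a representable $\Cat$-valued functor). That upgrade is precisely what \cite{Schultz2015} supplies; your sketch correctly identifies this as the main remaining work, but as written it is a gap rather than a deferral of routine bookkeeping.
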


In an exact equipment, there is a close connection between monoids and $\bo$ morphisms. This
connection is formalized in Theorem~\ref{thm:Mod_vs_bo} below, which is a key ingredient in the
proofs of our main theorems.

\begin{definition}
  Let $\dcat{D}$ be an exact equipment. We define the equipment $\dcat{D}^{\bo}$ as follows: the
  vertical category $\dcat{D}_0^{\bo}\ss\dcat{D}_0^{\rightarrow}$ is the full subcategory of the
  arrow category of $\dcat{D}_0$ spanned by the arrows in the class $\bo$. As such, we have functors
  $\dom,\cod\colon\dcat{D}_0^{\bo}\to\dcat{D}_0$. The rest of the structure of $\dcat{D}^{\bo}$ is
  defined by setting $\dcat{D}^{\bo}\coloneqq\cod^*\dcat{D}$, i.e.\ by the following strict pullback of categories (see Definition~\ref{def:induced_locally_equivalent_equipment}):
  \[ \begin{tikzcd}[column sep=large]
    \dcat{D}_1^{\bo} \ar[r] \ar[d, two heads] \ar[dr,phantom,"\lrcorner" very near start]
      & \dcat{D}_1 \ar[d, two heads] \\
    \dcat{D}_0^{\bo}\times\dcat{D}_0^{\bo} \ar[r,"\cod\times\cod"']
      & \dcat{D}_0\times\dcat{D}_0.
  \end{tikzcd} \]
\end{definition}

Section~\ref{chap:equipments_monoidal_profunctors} begins with an outline of our proof of
\ref{thm:TheoremA} as a sequence of equivalences. The last two results of the present section
provide the step in that sequence which is not about traced or compact categories specifically, but
in fact holds for any exact equipment. They form the bridge we need between profunctors and
bijective-on-objects functors. They can be seen as a stronger statement of the idea (mentioned in
the introduction to this section) that in an exact equipment $\dcat{D}$, there is a very close
connection between monoids in $\dcat{D}$ and objects of $\dcat{D}$.

\begin{proposition}
    \label{prop:Mon_vs_bo}
  Let $\dcat{D}$ be an exact equipment. There is an equivalence of fibrations on the left such
  that the triangle on the right also commutes:
  \begin{equation*}
    \begin{tikzcd}[column sep=0em]
      \Mon(\dcat{D}) \ar[rr,"\equiv"] \ar[dr,two heads,"\MOb"' pos=.3]
        && \dcat{D}_0^{\bo} \ar[dl,two heads,"\dom" pos=.3] \\
      & \dcat{D}_0 &
    \end{tikzcd}
    \qquad\quad
    \begin{tikzcd}[column sep=0em]
      \Mon(\dcat{D}) \ar[rr,"\equiv"] \ar[dr,"\ColDash"' pos=.3]
        && \dcat{D}_0^{\bo} \ar[dl,"\cod" pos=.3] \\
      & \dcat{D}_0 &
    \end{tikzcd}
  \end{equation*}
\end{proposition}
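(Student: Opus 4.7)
The plan is to construct mutually quasi-inverse functors $\Psi\colon\Mon(\dcat{D})\to\dcat{D}_0^{\bo}$ and $\Phi\colon\dcat{D}_0^{\bo}\to\Mon(\dcat{D})$ over $\dcat{D}_0$; since the projections $\MOb$ and $\dom$ are both fibrations (by Lemma~\ref{lemma:Mon_und_fib} and the $(\bo,\ff)$-factorization from Theorem~\ref{thm:orthogonal}, respectively), any equivalence of categories commuting strictly with the projections will automatically be an equivalence of fibrations.

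Define $\Psi(M)\coloneqq(i_M\colon|M|\to\Col{M})$. This lies in $\bo$ because Condition~1 of Definition~\ref{def:exact_equipment} makes $\vec{\imath}_M$ cartesian, so it serves simultaneously as the restriction square $\Col{M}(i_M,i_M)$ (with induced monoid structure matching $M$ by Lemma~\ref{lemma:Mon_und_fib}) and as a collapse---which is exactly the condition of being $\bo$ in Definition~\ref{def:boff}. On a monoid homomorphism $(f,\vec{f}\,)\colon M\to N$, I would vertically compose $\vec{f}$ with $\vec{\imath}_N$ to obtain an embedding of $M$ into $\Col{N}$, and invoke the universal property of $\Col{M}$ to produce a unique $\Col{f}\colon\Col{M}\to\Col{N}$ making the square commute; functoriality is then automatic from the uniqueness clauses.

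Define $\Phi(f\colon c\to d)\coloneqq d(f,f)$, the restriction of the unit monoid on $d$, equipped with its induced monoid structure from Lemma~\ref{lemma:Mon_und_fib}. On a square $(g,h)\colon f\to f'$ in $\dcat{D}_0^{\bo}$, vertically compose the cartesian 2-cell defining $d(f,f)$ with the horizontal unit 2-cell $\unit(h)$ and then factor through the cartesian 2-cell defining $d'(f',f')$; the resulting 2-cell over $g$ respects both monoid structures by the uniqueness of cartesian lifts. For the natural isomorphism $\Phi\Psi\iso\id_{\Mon(\dcat{D})}$, note that $\Phi\Psi(M)=\Col{M}(i_M,i_M)\iso M$ is immediate because $\vec{\imath}_M$ already is the defining cartesian restriction square. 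For $\Psi\Phi\iso\id_{\dcat{D}_0^{\bo}}$, given $f\in\bo$ we have $\Psi\Phi(f)=i_{d(f,f)}\colon c\to\Col{d(f,f)}$, and the definition of $\bo$ says the cartesian restriction square for $f$ is itself a collapse of $d(f,f)$, so uniqueness of collapses yields an isomorphism $\Col{d(f,f)}\iso d$ identifying $i_{d(f,f)}$ with $f$.

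Finally, the two triangles commute by inspection of $\Psi$: $\dom(\Psi(M))=|M|=\MOb(M)$ gives the left triangle, and $\cod(\Psi(M))=\Col{M}=\ColDash(M)$ gives the right one. The main obstacle is purely bookkeeping---checking functoriality of $\Psi$ and $\Phi$ and naturality of the unit and counit of the equivalence---but every such check reduces mechanically to either the universal property of a collapse or the cartesianness of a restriction square, so no conceptually new input beyond the exactness hypothesis is required.
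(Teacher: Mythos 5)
Your proposal is correct and follows essentially the same route as the paper: the equivalence sends a monoid $(c,M)$ to its collapse map $i_M\colon c\twoheadrightarrow\Col{M}$ (functorial via the universal property of the collapse), with inverse sending a $\bo$ map $f\colon c\to d$ to the restriction $d(f,f)$ with its induced monoid structure. The only cosmetic difference is that you justify the fibration-morphism property by the general fact that an equivalence over the base between fibrations is automatically cartesian, whereas the paper cites a specific lemma of \cite{Schultz2015} relating cartesianness of $\vec{f}$ and $\vec{\id}_{\tilde f}$; both are fine.
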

\begin{proof}
  The functor $\dom\colon\dcat{D}_0^{\bo}\to\dcat{D}_0$ is a fibration via the factorization system
  in Theorem~\ref{thm:orthogonal}. The equivalence sends a monoid $(c,M)$ to the collapse morphism
  $i_M\colon c\twoheadrightarrow\Col{M}$, which is in $\bo$ by the exactness of $\dcat{D}$. Since
  $\vec{\imath}_M$ is the universal embedding \eqref{eqn:universal_embedding} of $M$, any monoid homomorphism
  $(f,\vec{f}\mspace{2mu})$ gives rise to a unique $\tilde{f}$ such that
  \begin{equation*}
    \begin{tikzcd}
      c \ar[r,tick,"M" domA] \ar[d,"f"']
        & c \ar[d,"f"] \\
      d \ar[r,tick,"N"{codA,domB}] \ar[d,two heads,"i_N"']
        & d \ar[d,two heads,"i_N"] \\
      \Col{N} \ar[r,tick,"\Col{N}"' codB]
        & \Col{N}
      \twocellA{\vec{f}}
      \twocellB{\vec{\imath}_N}
    \end{tikzcd}
    \quad = \quad
    \begin{tikzcd}
      c \ar[r,tick,"M" domA] \ar[d,two heads,"i_M"']
        & c \ar[d,two heads,"i_M"] \\
      \Col{M} \ar[r,tick,"\Col{M}"{codA,domB}] \ar[d,"\tilde{f}"']
        & \Col{M} \ar[d,"\tilde{f}"] \\
      \Col{N} \ar[r,tick,"\Col{N}"' codB]
        & \Col{N}.
      \twocellA{\vec{\imath}_M}
      \twocellB{\vec{\id}_{\tilde{f}}}
    \end{tikzcd}
  \end{equation*}
  Moreover, the pair $(f,\tilde{f})$ defines a morphism of arrows $i_M\to i_N$ in
  $\dcat{D}_0^{\bo}$. By \cite[Lemma 4.14]{Schultz2015}, if $\vec{f}$ is cartesian then so is
  $\vec{\id}_{\tilde{f}}$, and clearly the converse also holds. It follows that the left triangle is
  a morphism of fibrations since $\vec{f}$ being cartesian over $f$ implies $(f,\tilde{f})$ is as
  well.

  The inverse equivalence $\dcat{D}^\bo_0\to\Mon(\dcat{D})$ sends a $\bo$ map $f\colon c\to d$ to
  the restriction $d(f,f)$ with its induced monoid structure.
\end{proof}

\begin{theorem}
    \label{thm:Mod_vs_bo}
  Let $\dcat{D}$ be an exact equipment with local reflexive coequalizers. There is an equivalence of
  equipments
  \[ \begin{tikzcd}[row sep=2.5ex, column sep=0em]
    \dMod(\dcat{D}) \ar[rr,"\equiv"]\ar[rd,"\Col{-}"' pos=.3] && \dcat{D}^{\bo}\ar[ld,"\cod" pos=.3]\\
    &\dcat{D}
  \end{tikzcd} \]
\end{theorem}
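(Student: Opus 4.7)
The plan is to exploit the universal property of pseudo-pullbacks: both $\dMod(\dcat{D})$ and $\dcat{D}^{\bo}$ sit over $\dcat{D}$ via local equivalences, and once their vertical 1-categories are identified, their 1-categories of proarrows must agree up to equivalence.

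First, I would record the two local equivalences to $\dcat{D}$. By Proposition~\ref{prop:collapse_local_equivalence}, collapse gives an equipment functor $\ColDash\colon\dMod(\dcat{D})\to\dcat{D}$ which is a local equivalence. On the other side, $\dcat{D}^{\bo}$ is by construction $\cod^*\dcat{D}$, so Definition~\ref{def:induced_locally_equivalent_equipment} ensures that the canonical equipment functor $\dcat{D}^{\bo}\to\dcat{D}$, covering $\cod\colon\dcat{D}^{\bo}_0\to\dcat{D}_0$, is also a local equivalence.

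Next I would invoke Proposition~\ref{prop:Mon_vs_bo} to obtain an equivalence of vertical 1-categories $\Mon(\dcat{D})\equiv\dcat{D}^{\bo}_0$ which commutes with the two projections $\ColDash$ and $\cod$ to $\dcat{D}_0$. By Remark~\ref{rem:strict_vs_pseudo_pullback}, being a local equivalence exhibits the 1-category of proarrows as a pseudo-pullback of the frame fibration $\dcat{D}_1\twoheadrightarrow\dcat{D}_0\times\dcat{D}_0$ along the induced square of the vertical map. Since these two maps are equivalent under the identification of Proposition~\ref{prop:Mon_vs_bo}, the universal property of pseudo-pullbacks of fibrations produces a canonical equivalence $\dMod(\dcat{D})_1\equiv\dcat{D}^{\bo}_1$ over $\dcat{D}_1$, lifting the vertical equivalence.

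The main obstacle is verifying that this data assembles to an equivalence of \emph{equipments}, not merely of underlying double categories: units and horizontal compositions must correspond. This should be essentially automatic because both $\ColDash$ and the canonical $\dcat{D}^{\bo}\to\dcat{D}$ are equipment functors, hence preserve units and horizontal composition up to coherent isomorphism; since both equipments share $\dcat{D}$ as their common image, the equivalence over $\dcat{D}$ is forced to respect this structure. For $\dMod(\dcat{D})$ the relevant preservation by $\ColDash$ is essentially the content of Proposition~\ref{prop:exact_coeqs} (which gives $\conj{i}_M\otimes_M\comp{i}_M\iso\unit\Col{M}$), while for $\dcat{D}^{\bo}$ it follows immediately from the strict pullback construction. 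Together these yield the desired equivalence of equipments over $\dcat{D}$.
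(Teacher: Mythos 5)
Your proposal is correct and follows essentially the same route as the paper: both proofs combine the local equivalences $\ColDash\colon\dMod(\dcat{D})\to\dcat{D}$ (Proposition~\ref{prop:collapse_local_equivalence}) and $\cod\colon\dcat{D}^{\bo}\to\dcat{D}$ (by construction) with the vertical equivalence of Proposition~\ref{prop:Mon_vs_bo}, using the pseudo-pullback characterization of local equivalences to lift it to the proarrow level. The paper's proof is just a terser statement of exactly this argument, so your elaboration of why the lift respects units and horizontal composition is a faithful expansion rather than a different approach.
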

\begin{proof}
  By Proposition~\ref{prop:collapse_local_equivalence} the equipment functor
  $\ColDash\colon\dMod(\dcat{D})\to\dcat{D}$ is a local equivalence, and
  $\cod\colon\dcat{D}^{\bo}\to\dcat{D}$ is a local equivalence by definition of $\dcat{D}^{\bo}$. It
  follows that the equivalence of fibrations from Proposition~\ref{prop:Mon_vs_bo} extends to an
  equivalence of equipments.
\end{proof}

\chapter{Equipments of monoidal profunctors}
  \label{chap:equipments_monoidal_profunctors}

With the needed material on general equipments out of the way, we now move on to topics specific to
monoidal, traced monoidal, and compact closed categories, culminating in the proofs of
\ref{thm:TheoremA} and \ref{thm:TheoremB}. In the introduction, we stated versions of these theorems
for traced categories, but versions about compact categories seem to be their most natural and
general statements. We will prove both.

After a brief review of monoidal, traced, and compact categories in
Section~\ref{sec:monoidal,compact,traced}, we define equipments $\dMonProf$, $\dTrProf$, and
$\dCompProf$ in Section~\ref{sec:monoidal_profunctors}, whose objects are respectively monoidal,
traced, and compact categories. \ref{thm:TheoremB} will be proven first through chains of
equivalences of fibrations over $\CompCat$ and $\TrCat$ respectively as follows:
\begin{equation*}
  \begin{tikzcd}[column sep=.8em]
    \int\limits^{\mathclap{\cat{C}\in\CompCat}} \Lax(\cat{C},\Set)
        \ar[r,"\equiv"]
      & \CPsh(\dCompProf)
        \ar[r,"\equiv"]
      & \Ptd(\dCompProf)
        \ar[r,"\equiv"]
      & \Mon(\dCompProf)
        \ar[r,"\equiv"]
      & \CompCat^{\bo} \\
    \int\limits^{\mathclap{\cat{T}\in\TrCat}} \Lax\big(\Int(\cat{T}),\Set\big)
        \ar[r,"\equiv"]
      & \CPsh(\dTrProf)
        \ar[rr,"\equiv"]
      && \Mon(\dTrProf)
        \ar[r,"\equiv"]
      & \TrCat^{\bo}
  \end{tikzcd}
\end{equation*}
The leftmost equivalences follow trivially from the definition of $\CPsh$,
Definition~\ref{def:copresheaves}. The rightmost equivalences will follow directly from
Proposition~\ref{prop:Mon_vs_bo} once we prove that $\dTrProf$ and $\dCompProf$ are exact in
Section~\ref{sec:exactness_proofs}. The middle equivalences are specific to traced and compact
categories; these are established in Section~\ref{sec:special_CompProf}.

To prove \ref{thm:TheoremA}, we need to deal with the issue of objectwise-freeness. This is
the purpose of Section~\ref{sec:monoids_on_free}. Finally in
Section~\ref{sec:characterization_of_traced} we give our ``traceless characterization of $\TTrCat$''. 

\section{Monoidal, Compact, and Traced Categories}
  \label{sec:monoidal,compact,traced}

We begin by reminding the reader of some categorical preliminaries: basic definitions and facts
about monoidal, traced, and compact categories, lax and strong functors, and the Int construction.
Standard references include \cite{KellyLaplaza}, \cite{JoyalStreet}, and \cite{JoyalStreetVerity}.

A \emph{strict monoidal category} $\cat{M}$ is a category equipped with a functor
$\otimes\colon\cat{M}\times\cat{M}\to\cat{M}$ and an object $I\in\cat{M}$, satisfying the usual
monoid axioms.%
\footnote{
  We also used the notation $\otimes$ to denote bimodule composition in
  Definition~\ref{def:monoids_and_modules}; hopefully the intended meaning of the symbol will be
  clear from context.
}
In other words, a strict monoidal category is a monoid object in the category $\Cat$. Such a
category $\cat{M}$ is \emph{symmetric} if there are in addition natural isomorphisms
\[
  \sigma_{X,Y}\colon X\otimes Y\to Y\otimes X
\]
satisfying equations $\sigma_{X,Y\otimes Z}=(\id_X\otimes\sigma_{X,Z})\circ(\sigma_{X,Y}\otimes
\id_Z)$ and $\sigma_{Y,X}\circ\sigma_{X,Y}=\id_{X\otimes Y}$.

\begin{warning}
    \label{warn:symmetric}
  Aside from the appendix, whenever we discuss monoidal categories in this article, we will mean
  symmetric strict monoidal categories.
\end{warning}

Let $\cat{M}$ and $\cat{N}$ be monoidal categories. A functor $F\colon\cat{M}\to\cat{N}$ is called
\emph{lax monoidal} if it is equipped with coherence morphisms
\begin{equation*}
  \begin{tikzcd}
    I_{\cat{N}} \rar{\mu} & F(I_{\cat{M}})
  \end{tikzcd}
  \quad\text{and}\quad
  \begin{tikzcd}
    F(X) \otimes_{\cat{N}} F(Y) \rar{\mu_{X,Y}} & F(X\otimes_{\cat{M}} Y)
  \end{tikzcd}
\end{equation*}
satisfying certain compatibility equations (see, e.g.\ \cite{Leinster,BorceuxV2}). If all coherence
morphisms are identities (resp.\ isomorphisms), then $F$ is \emph{strict} (resp.\ \emph{strong}). Let
$\Lax(\cat{M},\cat{N})$ denote the category of lax monoidal functors and monoidal transformations
from $\cat{M}$ to $\cat{N}$.

Write $\MMonCat$ for the 2-category of strict symmetric monoidal categories, strict symmetric
monoidal functors, and monoidal transformations. Let $\MonCat$ denote the underlying 1-category.

A \emph{compact category} is a (symmetric) monoidal category $\cat{C}$ with the property that for
every object $X\in\cat{C}$ there exists an object $X^*$ and morphisms $\eta_X\colon I\to X^*\otimes
X$ and $\epsilon_X\colon X\otimes X^*\to I$ such that the following diagrams commute:
\begin{equation*}
  \begin{tikzcd}[column sep=small]
    X\arrow[r,"\id_X"]\arrow[d,"\cong"'] & X \\
    X\otimes I\arrow[d,"X\otimes\eta_X"'] & I\otimes X\arrow[u,"\cong"'] \\
    X\otimes(X^*\otimes X)\arrow[r,"\cong"'] & (X\otimes X^*)\otimes X\arrow[u,"\epsilon_X\otimes X"']
  \end{tikzcd}
  \hspace{.6in}
  \begin{tikzcd}[column sep=small]
    X^*\arrow[r,"\id_{X^*}"]\arrow[d,"\cong"'] & X^*\\
    I\otimes X^*\arrow[d,"\eta_X\otimes X^*"'] & X^*\otimes I\arrow[u,"\cong"'] \\
    (X^*\otimes X)\otimes X^*\arrow[r,"\cong"'] & X^*\otimes (X\otimes X^*)\arrow[u,"X^*\otimes\epsilon_X"']
  \end{tikzcd}
\end{equation*}
We will denote by $\CCompCat$ the full sub-2-category of $\MMonCat$ spanned by the compact
categories and write $\UCM\colon\CCompCat\to\MMonCat$ for the corresponding forgetful functor. Let
$\CompCat$ denote the underlying 1-category.

Given a morphism $f\colon X\to Y$ in $\cat{C}$, we denote by $f^*\colon Y^*\to X^*$ the composite
\begin{equation*}
  Y^*\To{\eta_X}X^*\otimes X\otimes Y^* \To{f}X^*\otimes Y\otimes Y^*\To{\epsilon_Y}X^*.
\end{equation*}
It is easy to check that a strong functor $F\colon\cat{C}\to\cat{M}$ to a monoidal category
preserves all duals that exist in $\cat{C}$, i.e.\ there is a natural isomorphism $F(c^*)\iso
F(c)^*$. From this, it follows that if $F,G\colon\cat{C}\to\cat{C'}$ are functors between compact
categories, then any natural transformation $\alpha\colon F\to G$ is a natural isomorphism. Indeed,
for any object $c\in\cat{C}$, the inverse of the $c$-component $\alpha_c\colon Fc\to Gc$ is given by
the dual morphism $(\alpha_{c^*})^*\colon Gc\to Fc$ to the dual component. Thus all 2-cells in
$\CCompCat$ are invertible.

A \emph{trace structure} on a (symmetric) monoidal category $\cat{T}$ is a collection of functions
\begin{equation}
    \label{dia:trace_function}
  \Tr^U_{X,Y}\colon\Hom_{\cat{T}}(U\otimes X,U\otimes Y)\to\Hom_{\cat{T}}(X,Y)
\end{equation}
for $U,X,Y\in\Ob(\cat{T})$ satisfying seven equational axioms, we refer the reader to
\cite{JoyalStreetVerity} for more details. If $\cat{T}$ and $\cat{U}$ are traced categories, then a
\emph{(strict) traced functor} is simply a strict symmetric monoidal functor which commutes with the
trace operation.

In \cite{JoyalStreetVerity}, it is shown that every traced category $\cat{T}$ embeds as a full
subcategory of a compact category $\Int(\cat{T})$ whose objects are pairs
$(\inp{X},\outp{X})\in\Ob(\cat{T})\times\Ob(\cat{T})$ with morphisms given by
\begin{equation*}
  \Hom_{\Int(\cat{T})}\big((\inp{X},\outp{X}),(\inp{Y},\outp{Y})\big)
    = \Hom_{\cat{T}}(\inp{X}\otimes\outp{Y},\outp{X}\otimes\inp{Y})
\end{equation*}
and with compositions computed using the trace of $\cat{T}$.

\begin{remark}
    \label{rem:traced_2morphisms}
  Traced categories were first defined in~\cite{JoyalStreetVerity}, which defines the 2-morphisms
  between traced functors to simply be monoidal transformations. However, this choice does not
  behave appropriately with the $\Int$ construction (for example $\Int$ would not be 2-functorial).
  The error was corrected in \cite{HK}, where it was shown that the appropriate 2-morphisms between
  traced functors are natural \emph{isomorphisms}.
\end{remark}

We denote by $\TTrCat$ the corrected 2-category of traced categories (where 2-cells are invertible),
and we denote its underlying 1-category by $\TrCat$. Write $\UTM\colon\TTrCat\to\MMonCat$ for the
forgetful functor.

Every compact category $\cat{C}$ has a canonical trace structure, defined on a morphism $f\colon
U\otimes X\to U\otimes Y$ morally (up to symmetries and identities) to be $\epsilon_U\circ f\circ
\eta_U$. More precisely, one defines $\Tr^U_{X,Y}[f]$ to be the composite
\begin{equation*}
  \begin{tikzcd}[column sep=29pt]
    X\ar[r,"\eta_U\otimes X"]&
    U^*\otimes U\otimes X\ar[r,"U^*\otimes f"]&
    U^*\otimes U\otimes Y\ar[r,"\sigma_{U^*,U}\otimes Y"]&[6pt]
    U\otimes U^*\otimes Y\ar[r,"\epsilon_U\otimes Y"]&
    Y
  \end{tikzcd}
\end{equation*}
Thus we have a functor $\UCT\colon\CCompCat\to\TTrCat$. It is shown
in~\cite{JoyalStreetVerity}~and~\cite{HK} that this functor is the right half of a 2-adjunction
\begin{equation}
    \label{dia:traced_compact_adjunction}
  \begin{tikzcd}
    \TTrCat\arrow[r,shift left=.5ex, "\Int"]&\CCompCat\arrow[l,shift left=.5ex,"\UCT"].
  \end{tikzcd}
\end{equation}
Note that $\UCM=\UCT\UTM$. In Section~\ref{sec:characterization_of_traced} we will be able to
formally define the 2-category $\TTrCat$ without mentioning the trace structure
\eqref{dia:trace_function} or the usual seven axioms, but instead in terms of the relationship between
compact and monoidal categories.

\begin{remark}
    \label{rmk:fully_faithful_and_trace}
  We record the following facts, which hold for any traced category $\cat{T}$; each is shown in, or
  trivially derived from,~\cite{JoyalStreetVerity}:
  \begin{enumerate}[label={\upshape\roman*}.]
    \item The component $\cat{T}\to\Int(\cat{T})$ of the unit of the adjunction
      \eqref{dia:traced_compact_adjunction} is fully faithful. It follows that
      $\Int\colon\TTrCat\to\CCompCat$ is locally fully faithful.
    \item If $\cat{M}$ is a monoidal category and $F\colon\cat{M}\to\cat{T}$ is a fully faithful
      symmetric monoidal functor, then $\cat{M}$ has a unique trace for which $F$ is a traced
      functor.
    \item If $\cat{T}$ is compact then the counit
      $\Int(\cat{T})\To{\raisebox{-1ex}{$\equiv$}}\cat{T}$ is an equivalence.
    \item Suppose that $\cat{T'}$ is a traced category and that $F\colon \cat{T}\to \cat{T}'$ is a
      traced functor. Then $F$ is bijective-on-objects (resp.\ fully faithful) if and only if
      $\Int(F)$ is.
  \end{enumerate}
\end{remark}

\section{Monoidal profunctors}
  \label{sec:monoidal_profunctors}

Suppose $\cat{C}$ and $\cat{D}$ are monoidal categories. We define a \emph{monoidal profunctor} $M$
from $\cat{C}$ to $\cat{D}$ to be an ordinary profunctor (see Example~\ref{ex:profunctors}) $M\colon
\op{\cat{C}}\times\cat{D}\to\Set$ which is equipped with a lax-monoidal structure, where $\Set$ is
endowed with the cartesian monoidal structure. In the bimodule notation, this means that there is an
associative operation assigning to any elements $m_1\in M(c_1,d_1)$ and $m_2\in M(c_2,d_2)$ an
element $m_1\boxtimes m_2\in M(c_1\otimes c_2,d_1\otimes d_2)$ such that
\[
   (f_1\cdot m_1\cdot g_1)\boxtimes(f_2\cdot m_2\cdot g_2)
      = (f_1\otimes f_2)\cdot(m_1\boxtimes m_2)\cdot(g_1\otimes g_2),
\]
as well as a distinguished element $I_M\in M(I,I)$ such that $I_M\boxtimes m = m = m\boxtimes I_M$
for any $m\in M(c,d)$. If moreover $m_2\boxtimes m_1 = \sigma_{d_1,d_2}\cdot(m_1\boxtimes
m_2)\cdot\sigma_{c_1,c_2}^{-1}$, then one says $M$ is \emph{symmetric monoidal}.%
\footnote{
  We will generally suppress the word \emph{symmetric} since all monoidal categories and monoidal
  profunctors are symmetric by assumption; see Warning~\ref{warn:symmetric}.
}

A \emph{monoidal profunctor morphism} $\phi\colon M\to N$ is simply a monoidal transformation.
Spelling this out in bimodule notation, $\phi$ is an ordinary morphism of profunctors such that
$\phi(m_1\boxtimes m_2)=\phi(m_1)\boxtimes\phi(m_2)$ and $\phi(I_M)=I_N$.

We define a double category $\dMonProf$ whose objects are (symmetric) monoidal categories, vertical
arrows are strict (symmetric) monoidal functors, horizontal arrows are (symmetric) monoidal
profunctors, and 2-cells are defined as in \eqref{eqn:Prof2cells}, requiring $\phi$ to be a monoidal
transformation. It remains to check that the horizontal composition of monoidal profunctors is
monoidal. This follows from the fact that reflexive coequalizers---namely the ones from
\eqref{eqn:coendComp}---commute with products in $\Set$. Note that $\dMonProf$ is in fact an
equipment since the cartesian 2-cell \eqref{eq:cartesian profunctor morphism} is a monoidal
transformation if $N$, $F$, and $F'$ are monoidal functors. We leave it as an exercise for the
reader to check that there is an isomorphism of 2-categories $\VVer(\dMonProf)\iso\MMonCat$, i.e.\
that for any pair of strict symmetric monoidal functors $F,G\colon\cat{C}\to\cat{D}$, there is a
bijection between monoidal transformations
$\cat{C}(\textrm{--},\textrm{--})\to\cat{D}(F(\textrm{--}),G(\textrm{--}))$ and monoidal
transformations $F\to G$.

The fully faithful functors $\UCM\colon\CompCat\to\MonCat$ and $\Int\colon\TrCat\to\CompCat$,
defined above, induce equipments $\dCompProf\coloneqq\UCM^*(\dMonProf)$ and
$\dTrProf\coloneqq\Int^*(\dCompProf)$ as in
Definition~\ref{def:induced_locally_equivalent_equipment}. In particular, the vertical 1-categories
of these equipments are given by
\[
  \dMonProf_0=\MonCat,\quad\dCompProf_0=\CompCat,\quad\dTrProf_0=\TrCat.
\]

It may seem strange at first to define a proarrow $\cat{T}\tickar \cat{T}'$ between traced
categories to be a monoidal profunctor $\Int(\cat{T})\tickar\Int(\cat{T}')$. The next proposition serves as a
first sanity check on this definition, and the remainder of this paper provides further support.

\begin{proposition}
    \label{prop:2iso_traced}
  There is an isomorphism of 2-categories, $\VVer(\dTrProf)\iso\TTrCat$.
\end{proposition}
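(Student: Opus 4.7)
The plan is to peel off the two layers of pullback used to define $\dTrProf$ and then invoke the local fully faithfulness of $\Int$. The underlying 1-category $\dTrProf_0 = \TrCat$ is correct by construction (Definition~\ref{def:induced_locally_equivalent_equipment}), so the real content is the identification of 2-cells between traced functors $F, G \colon \cat{T} \to \cat{T}'$.

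By the strict pullback defining $\dTrProf = \Int^*(\dCompProf)$, a 2-cell in $\VVer(\dTrProf)$ from $F$ to $G$ is the same data as a 2-cell in $\VVer(\dCompProf)$ from $\Int(F)$ to $\Int(G)$. Applying the same observation to $\dCompProf = \UCM^*(\dMonProf)$, this is in turn the same as a 2-cell in $\VVer(\dMonProf)$ between the underlying strict monoidal functors $\UCM\Int(F)$ and $\UCM\Int(G)$. The (isomorphism) exercise $\VVer(\dMonProf) \iso \MMonCat$ noted just after the definition of $\dMonProf$ identifies this with a monoidal natural transformation $\Int(F) \Rightarrow \Int(G)$. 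Since $\Int(\cat{T}')$ is compact, every such transformation is automatically invertible, as recalled in Section~\ref{sec:monoidal,compact,traced} (via dualization of components), so we obtain precisely a 2-cell $\Int(F) \Rightarrow \Int(G)$ in $\CCompCat$.

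Now I would invoke Remark~\ref{rmk:fully_faithful_and_trace}(i): the 2-functor $\Int \colon \TTrCat \to \CCompCat$ is locally fully faithful, so the map on hom-sets of monoidal natural isomorphisms $\TTrCat(F,G)(\alpha) \to \CCompCat(\Int F, \Int G)(\Int\alpha)$ is a bijection. Composing this bijection with the chain of identifications above yields a bijection between 2-cells $F \Rightarrow G$ in $\VVer(\dTrProf)$ and 2-cells $F \Rightarrow G$ in $\TTrCat$. One then checks that this bijection is strictly compatible with vertical composition and with horizontal (whiskering) composition along traced functors; this is routine because every identification in the chain above is induced by a strict 2-functor or a pullback of such, and each composition is inherited coherently from composition of ordinary natural transformations in $\MMonCat$.

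The only real obstacle is bookkeeping: one must be careful that the isomorphism $\VVer(\dMonProf) \iso \MMonCat$ extends through the two induced-equipment pullbacks to give $\VVer(\dCompProf) \iso \CCompCat$ as 2-categories (and not merely on objects and 1-cells). But since $\CCompCat$ is defined as the full sub-2-category of $\MMonCat$ on compact categories and $\dCompProf_0 = \CompCat$ by the pullback, no non-formal work is required: every piece of structure on both sides is transported directly along the pullback squares.
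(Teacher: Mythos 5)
Your proposal is correct and follows essentially the same route as the paper: identify the underlying 1-categories, unwind the pullback definition of $\dTrProf$ to see that $\VVer(\dTrProf)(F,G)=\CCompCat(\Int(F),\Int(G))$, and conclude via the local fully faithfulness of $\Int$ from Remark~\ref{rmk:fully_faithful_and_trace}. The paper compresses your explicit two-step peeling through $\dMonProf$ (and the automatic invertibility of transformations into compact categories) into the phrase ``by the definition of $\dTrProf$,'' but the argument is the same.
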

\begin{proof}
  Clearly these 2-categories have the same underlying 1-category, so it suffices to show that there
  is a bijection $\VVer(\dTrProf)(F,G)\iso\TTrCat(F,G)$ for any traced functors $F,G\colon
  \cat{T}\to \cat{T}'$ which preserve units and composition. By the definition of $\dTrProf$, we
  have $\VVer(\dTrProf)(F,G)=\CCompCat(\Int(F),\Int(G))$. The result then follows since $\Int$ is
  locally fully faithful \cite{JoyalStreetVerity}.
\end{proof}

Thus from the definitions and Proposition~\ref{prop:2iso_traced}, we see that the vertical
2-categories of these equipments are as expected:
\[
  \VVer(\dMonProf)\iso\MMonCat,\quad\VVer(\dCompProf)\iso\CCompCat,\quad\VVer(\dTrProf)\iso\TTrCat.
\]

\begin{proposition}
  Each of the equipments $\dMonProf$, $\dTrProf$, and $\dCompProf$ has local reflexive coequalizers.
\end{proposition}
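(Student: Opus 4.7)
The plan is to reduce everything to the case of $\dMonProf$, and then reduce the case of $\dMonProf$ to the known local-reflexive-coequalizer property of $\dProf$, using in an essential way that reflexive coequalizers in $\Set$ commute with finite products.

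First I would handle $\dMonProf$. Given monoidal categories $\cat{C}, \cat{D}$, a reflexive pair $M_1 \rightrightarrows M_2$ in $\HHor(\dMonProf)(\cat{C}, \cat{D})$ forgets to a reflexive pair in $\HHor(\dProf)(\cat{C}, \cat{D})$, which admits a reflexive coequalizer $M$ computed pointwise in $\Set$ (this is standard and is what makes $\dProf$ locally cocomplete). To lift the monoidal structure, note that $M \boxtimes M$ must be computed from $M_2 \boxtimes M_2$; the relevant coequalizer diagrams involve taking products such as $M_2(c_1,d_1) \times M_2(c_2,d_2)$, and since reflexive coequalizers commute with finite products in $\Set$, the operations $\boxtimes$ and $I_M$ descend uniquely to $M$, with all axioms (associativity, unitality, symmetry) inherited from $M_2$. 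The universal property in $\dMonProf$ then follows from the universal property in $\dProf$ together with the uniqueness of the lifted monoidal structure. Similarly, for horizontal composition: since $\odot$ in $\dMonProf$ is computed on underlying profunctors by the same coend formula \eqref{eqn:coendComp} (with induced monoidal structure via commutativity of reflexive coequalizers and finite products), and since $\odot$ preserves reflexive coequalizers variable-by-variable in $\dProf$, the same holds in $\dMonProf$.

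Next I would transfer this to $\dTrProf$ and $\dCompProf$. Both are defined in Section~\ref{sec:monoidal_profunctors} as induced equipments $\UCM^*(\dMonProf)$ and $\Int^*(\dCompProf) = \Int^*\UCM^*(\dMonProf)$. By Definition~\ref{def:induced_locally_equivalent_equipment} and Remark~\ref{rem:strict_vs_pseudo_pullback}, the horizontal hom-categories of an induced equipment are equal (strictly) to horizontal hom-categories of the target equipment over the images of the objects: for compact $\cat{C}, \cat{C}'$ we have $\HHor(\dCompProf)(\cat{C}, \cat{C}') = \HHor(\dMonProf)(\UCM\cat{C}, \UCM\cat{C}')$, and for traced $\cat{T}, \cat{T}'$ we have $\HHor(\dTrProf)(\cat{T}, \cat{T}') = \HHor(\dMonProf)(\UCM\Int\cat{T}, \UCM\Int\cat{T}')$. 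Moreover, horizontal composition of proarrows and the action of $\odot$ on 2-cells are likewise strictly inherited from $\dMonProf$. Hence existence of reflexive coequalizers and their preservation by $\odot$ in each variable are immediate consequences of the corresponding properties in $\dMonProf$.

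The main (only) real content is the first paragraph, and within it the only nontrivial step is verifying that the monoidal structure on a reflexive coequalizer $M$ of monoidal profunctors is well-defined; this relies entirely on the commutativity of reflexive coequalizers with finite products in $\Set$, which is the same fact that was already used in Section~\ref{sec:monoidal_profunctors} to show that $\odot$ of two monoidal profunctors is again monoidal. Everything else is formal, so the proof should be short: verify once that reflexive coequalizers lift from $\dProf$ to $\dMonProf$, observe that $\odot$ is computed on underlying profunctors and so inherits the preservation property, then cite the pullback definition of $\dCompProf$ and $\dTrProf$ to conclude.
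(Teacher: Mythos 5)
Your proposal is correct and follows essentially the same route as the paper: reduce to $\dMonProf$ via the local equivalences defining $\dTrProf$ and $\dCompProf$, observe that reflexive coequalizers of monoidal profunctors are created from the underlying profunctors because reflexive coequalizers commute with finite products in $\Set$, and deduce preservation by $\odot$ from the fact that horizontal composition is itself built from a reflexive coequalizer. The paper's version is just a terser statement of the same argument (phrased as closure of lax functors $\cat{M}\to\Set$ under reflexive coequalizers), so no further comment is needed.
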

\begin{proof}
  It suffices to prove this for $\dMonProf$, since the other two are locally equivalent to it.
  For any monoidal category $\cat{M}$, the category of lax monoidal functors $\cat{M}\to\Set$ is
  closed under reflexive coequalizers. This follows easily from the fact that reflexive coequalizers
  commute with finite products. (In fact, the same argument shows that the category $\cat{O}\alg$ of
  algebras for any colored operad $\cat{O}$ is closed under reflexive coequalizers.) Thus in
  particular the category of monoidal profunctors $\cat{M}\tickar\cat{N}$, i.e.\ the category of lax
  functors $\op{\cat{M}}\times\cat{N}\to\Set$, is closed under reflexive coequalizers.

  The fact that tensor product of monoidal profunctors preserves reflexive coequalizers follows from
  the fact that the tensor product itself is constructed as a reflexive coequalizer.
\end{proof}

\begin{remark}
  The equipments $\dMonProf$, $\dTrProf$, and $\dCompProf$ are in fact locally cocomplete. The
  category of profunctors $\cat{C}\tickar\cat{D}$ in any of these equipments is equivalent to the
  category of algebras for a monad on $\Set^{\op{\cat{C}}\times\cat{D}}$, and it is a general fact
  that if the category of algebras for a monad on a cocomplete category has reflexive coequalizers,
  then it has all colimits.
\end{remark}

\section{$\dMonProf$, $\dCompProf$, and $\dTrProf$ are exact}
  \label{sec:exactness_proofs}
Our next goal is to prove that the equipments $\dMonProf$, $\dTrProf$, and $\dCompProf$ from Section~\ref{sec:monoidal_profunctors} are
all exact, as in Definition~\ref{def:exact_equipment}. We then discuss a few consequences of exactness: the $(\bo,\ff)$ factorization systems (Proposition~\ref{prop:(bo,ff)_really_is}) and the relationship between monoids and $\bo$-maps in these equipments (Corollary~\ref{cor:Mod_vs_bo}).

\begin{proposition}
    \label{prop:MonProf_exact}
  The equipment $\dMonProf$ is exact.
\end{proposition}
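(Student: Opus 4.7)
The plan is to mimic the $\dProf$ case from Example~\ref{ex:monoid_in_Prof}: given a monoid $M\colon\cat{C}\tickar\cat{C}$ in $\dMonProf$, with unit $i\colon\Hom_{\cat{C}}\to M$ and multiplication $\bullet\colon M\odot M\to M$, the underlying ordinary-profunctor collapse is the category $\Col{M}$ with the same objects as $\cat{C}$, with hom-sets $\Hom_{\Col{M}}(c,d)\coloneqq M(c,d)$ and composition given by $\bullet$. The task is to upgrade $\Col{M}$ to a strict symmetric monoidal category, show the embedding $i_M\colon\cat{C}\to\Col{M}$ is strict monoidal, verify that $\vec{\imath}_M$ is cartesian in $\dMonProf$, and finally establish Condition~2 of Definition~\ref{def:exact_equipment} via Proposition~\ref{prop:exact_coeqs}.

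For Condition~1, I define the tensor on $\Col{M}$ by $c\otimes d\coloneqq c\otimes_{\cat{C}} d$ on objects and by $m_1\otimes m_2\coloneqq m_1\boxtimes m_2$ on morphisms, with unit object $I_{\cat{C}}$, identities $\id_c\coloneqq i(\id_c)$, and symmetries $\sigma_{c,d}\coloneqq i(\sigma_{c,d})$. That this defines a strict symmetric monoidal category reduces to three facts about the monoidal-profunctor structure of $M$: associativity and unitality of $\boxtimes$; the symmetry axiom $m_2\boxtimes m_1=\sigma\cdot(m_1\boxtimes m_2)\cdot\sigma^{-1}$; and the crucial interchange law
\[
  (m_2\bullet m_1)\boxtimes(n_2\bullet n_1) = (m_2\boxtimes n_2)\bullet(m_1\boxtimes n_1),
\]
which is precisely the statement that $\bullet\colon M\odot M\to M$ is a morphism of monoidal profunctors (the monoidal structure on $M\odot M$ being induced termwise by $\boxtimes$). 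The strict monoidality of $i_M$ follows from $i$ being a monoidal transformation, namely $i(f\otimes g)=i(f)\boxtimes i(g)$ and $i(\id_I)=I_M$. Cartesianness of $\vec{\imath}_M$ is immediate: by construction the restricted monoidal proarrow $\unit(\Col{M})(i_M,i_M)$ agrees with $M$ on the nose.

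The universal property is inherited from the $\dProf$ case with one extra monoidality check: an embedding $(f,\vec{f}\mspace{2mu})\colon(\cat{C},M)\to\cat{X}$ in $\dMonProf$ induces a functor $\tilde{f}\colon\Col{M}\to\cat{X}$ by $\tilde{f}(m)\coloneqq\vec{f}(m)$, and the condition that $\vec{f}$ is a monoidal 2-cell---i.e.\ $\vec{f}(m_1\boxtimes m_2)=\vec{f}(m_1)\otimes\vec{f}(m_2)$ and $\vec{f}(I_M)=\id_{I_{\cat{X}}}$---translates directly into strict monoidality of $\tilde{f}$. Uniqueness is automatic because $i_M$ is identity-on-objects and surjective on morphisms. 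For Condition~2, since $\dMonProf$ has local reflexive coequalizers, Proposition~\ref{prop:exact_coeqs} reduces the task to checking $\conj{i}_M\otimes_M\comp{i}_M\iso\unit(\Col{M})$ in $\dMonProf$. The underlying isomorphism of ordinary profunctors holds because $\dProf$ is exact; since $\comp{i}_M$ and $\conj{i}_M$ inherit monoidal-profunctor structures from the unit profunctor of the monoidal category $\Col{M}$, and the isomorphism is assembled from the structural 2-cells of the companion/conjoint adjunction (which are monoidal by construction), it automatically lives in $\dMonProf$.

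The main obstacle is the interchange law in Step~1: without it the formula $m_1\otimes m_2\coloneqq m_1\boxtimes m_2$ does not define a bifunctor on $\Col{M}$. Once one observes that this identity is simply the monoidal-transformation axiom for $\bullet$, every other verification is a routine diagram chase transporting monoidal-profunctor identities across $\boxtimes$, $\bullet$, and $i$.
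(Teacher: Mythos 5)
Your proof is correct, and for Condition~1 of Definition~\ref{def:exact_equipment} it coincides with the paper's: the same explicit construction of $\Col{M}$ (objects of $\cat{C}$, hom-sets $M(c,d)$, identities $i(\id_c)$, composition $\bullet$, tensor $\boxtimes$), the same identity-on-objects embedding, and the same observation that $\vec{\imath}_M$ is cartesian because the restriction $\unit(\Col{M})(i_M,i_M)$ is literally $M$. Where you diverge is Condition~2. The paper disposes of it in one line by checking the equivalence \eqref{eqn:exact_Hor_Bimod} directly: an $(M,N)$-bimodule structure on a monoidal profunctor $\cat{C}\tickar\cat{D}$ is precisely the data of a monoidal profunctor $\Col{M}\tickar\Col{N}$, since the actions of $M$ and $N$ extend the actions of $\Hom_{\cat{C}}$ and $\Hom_{\cat{D}}$ to actions of the larger hom-sets of the collapses. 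You instead invoke Proposition~\ref{prop:exact_coeqs} and reduce to the single isomorphism $\conj{i}_M\otimes_M\comp{i}_M\iso\unit\Col{M}$, deducing it from exactness of $\dProf$ plus the fact that the forgetful functor to $\dProf$ preserves the relevant reflexive coequalizers. That route is legitimate (local reflexive coequalizers for $\dMonProf$ are established before the exactness propositions, so there is no circularity), and it has the virtue of isolating exactly where the underlying-profunctor computation enters; but it is slightly longer than necessary and leans on an unstated preservation fact, whereas the direct bimodule identification is immediate from the construction. Two small imprecisions worth fixing: the functor $i_M$ is \emph{not} surjective on morphisms in general (its action is $f\mapsto i(f)$); uniqueness of $\tilde{f}$ follows because the components of the 2-cell $\vec{\imath}_M$ are identities $M(c,d)\to\Hom_{\Col{M}}(c,d)$, so the factorization equation \eqref{eqn:universal_embedding} determines $\tilde{f}$ on every morphism of $\Col{M}$. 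And in your last step you should say explicitly that $\conj{i}_M\otimes_M\comp{i}_M$ computed in $\dMonProf$ has the same underlying profunctor as the one computed in $\dProf$ (this is the content of the local-reflexive-coequalizer proposition), after which a monoidal profunctor morphism whose underlying map is invertible is automatically invertible.
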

\begin{proof}
  Suppose that $M\colon \cat{C}\tickar \cat{C}$ is a monoid in $\dMonProf$. One uses $M$ to
  construct a category $\Col{M}$ with the same objects as $\cat{C}$, and with hom sets defined by
  $\Col{M}(c,d)\coloneqq M(c,d)$ for any pair of objects $c,d\in\Ob(\cat{C})$. For any object $c$,
  the identity is provided by $i(\id_{\cat{C}})$, while the multiplication $\bullet$ on $M$ defines composition in $\Col{M}$.

  The unit of $M$ can also be used to construct an identity-on-objects functor
  $i_M\colon\cat{C}\to\Col{M}$ and an embedding 2-cell $\vec{\imath}_M$ sending any element of $M$
  to itself as a morphism of $\Col{M}$. It is easy to see that $\vec{\imath}_M$ is cartesian and
  that $(i_M,\vec{\imath}_M)$ is a collapse. The category $\Col{M}$ has a canonical monoidal
  structure, which on objects is just that of $\cat{C}$ and on morphisms is induced by the monoidal
  profunctor structure of $M$. It is also simple to verify the second part of
  Definition~\ref{def:exact_equipment}: an $(M,N)$-bimodule is precisely the data of a profunctor
  $\Col{M}\tickar\Col{N}$.
\end{proof}

\begin{proposition}
    \label{prop:CompProf_exact}
  The equipment $\dCompProf$ is exact.
\end{proposition}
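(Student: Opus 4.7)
The plan is to leverage Proposition~\ref{prop:MonProf_exact} together with the fact that $\dCompProf = \UCM^*(\dMonProf)$ is defined by pullback along the fully faithful $\UCM\colon\CompCat\to\MonCat$, so that its proarrows (and 2-cells, and horizontal composition) agree with those of $\dMonProf$ whenever the source and target are compact. The second exactness condition will then be essentially automatic, and the real work is confined to verifying that collapses in $\dMonProf$ of monoids living over compact categories remain inside $\CompCat$.

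Given a monoid $M\colon \cat{C}\tickar\cat{C}$ in $\dCompProf$, regard it as a monoid in $\dMonProf$ and apply Proposition~\ref{prop:MonProf_exact} to obtain the monoidal collapse $i_M\colon\cat{C}\to\Col{M}$ together with the cartesian embedding $\vec{\imath}_M$. The category $\Col{M}$ has the same objects as $\cat{C}$ and its tensor product agrees with that of $\cat{C}$ on objects, while $i_M$ is strict monoidal and bijective-on-objects. I would then transport the compact structure from $\cat{C}$ to $\Col{M}$ by the only available choice: for each $X\in\Ob(\cat{C})=\Ob(\Col{M})$, take $X^{*}$ to be the dual chosen in $\cat{C}$, and take the unit and counit in $\Col{M}$ to be $i_M(\eta_X)$ and $i_M(\epsilon_X)$. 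Since $i_M$ is strict symmetric monoidal, it sends the triangle identities for $(X,X^*,\eta_X,\epsilon_X)$ in $\cat{C}$ to the required triangle identities in $\Col{M}$, so $\Col{M}$ is compact and $i_M$ is a morphism in $\CompCat$.

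For the universal property in $\dCompProf$, consider any embedding $(f,\vec{f}\mspace{2mu})\colon(c,M)\to x$ with $x\in\CompCat$. By exactness of $\dMonProf$ there is a unique strict symmetric monoidal $\tilde f\colon\Col{M}\to x$ in $\MonCat$ factoring $\vec{f}$ through $\vec{\imath}_M$ as in \eqref{eqn:universal_embedding}. Since $\CCompCat\subset\MMonCat$ is a full sub-2-category, $\tilde f$ automatically lies in $\CompCat$, giving the required factorization; moreover $\vec{\imath}_M$, being cartesian in $\dMonProf_1$, is still cartesian in $\dCompProf_1$ because the frame fibration of $\dCompProf$ is the strict pullback of that of $\dMonProf$ along $\UCM\times\UCM$. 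This establishes Condition~1 of Definition~\ref{def:exact_equipment}.

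For Condition~2, observe that for compact $\cat{C},\cat{D}$ the categories $\HHor(\dCompProf)(\cat{C},\cat{D})$ and $\HHor(\dMonProf)(\cat{C},\cat{D})$ coincide (by the defining pullback), and likewise the categories of $(M,N)$-bimodules in $\dCompProf$ and in $\dMonProf$ coincide because bimodules are determined by data in the horizontal bicategory together with the monoid structure. The restriction functor $\HHor(\dCompProf)(\Col{M},\Col{N})\to\Bimod{M}{N}$ is therefore identified with the corresponding restriction functor for $\dMonProf$, which is an equivalence by Proposition~\ref{prop:MonProf_exact}. The main (and only substantive) obstacle in this plan is the verification that $\Col{M}$ actually admits a compact structure with $i_M$ compact-preserving; once one commits to defining the duality data as the image under $i_M$ of the data in $\cat{C}$, the required axioms are obtained by applying the strict monoidal functor $i_M$ to the corresponding axioms in $\cat{C}$, so even this step is essentially formal.
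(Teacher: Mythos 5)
Your proposal is correct and takes essentially the same route as the paper: collapse the monoid in $\dMonProf$ via Proposition~\ref{prop:MonProf_exact}, observe that the bijective-on-objects strict monoidal functor $i_M$ transports duals so that $\Col{M}$ is compact, and then use the fact that $\dCompProf$ is the full sub-equipment induced by the fully faithful $\UCM$ to conclude that the collapse and Condition~2 carry over. You merely spell out more explicitly (the duality data on $\Col{M}$, the fullness argument for the universal property, the creation of cartesian 2-cells by the strict pullback) what the paper compresses into the phrases ``any strong monoidal functor preserves duals'' and ``$\UCM$ is a fully faithful local equivalence.''
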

\begin{proof}
  We can consider a monoid $M\colon \cat{C}\tickar \cat{C}$ in $\dCompProf$ as a monoid in
  $\dMonProf$; it has a collapse embedding $i_M\colon M\to \Col{M}$ by
  Proposition~\ref{prop:MonProf_exact}. The collapse $\Col{M}$ is a monoidal category and, by
  Theorem~\ref{thm:Mod_vs_bo}, $i_M$ is a (strict symmetric monoidal) $\bo$ functor. But any strong
  monoidal functor preserves duals, so every object of $\Col{M}$ has a dual and hence $\Col{M}$ is
  compact. The map $\UCM\colon\dCompProf\to\dMonProf$ is a fully faithful local equivalence and so
  $\Col{M}$ being a collapse in $\dMonProf$ implies it is a collapse in $\dCompProf$.
\end{proof}

We record the following consequence of Theorem~\ref{thm:orthogonal} in the current notation.

\begin{corollary}
  Each of the 2-categories $\MMonCat$ and $\CCompCat$ admits a 2-orthogonal $(\bo,\ff)$ factorization system.
\end{corollary}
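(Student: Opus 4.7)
The plan is to invoke Theorem~\ref{thm:orthogonal} applied to the two exact equipments we have just established. Specifically, Propositions~\ref{prop:MonProf_exact} and \ref{prop:CompProf_exact} show that $\dMonProf$ and $\dCompProf$ are exact, and the vertical 2-categories were identified in Section~\ref{sec:monoidal_profunctors} as $\VVer(\dMonProf)\iso\MMonCat$ and $\VVer(\dCompProf)\iso\CCompCat$. Theorem~\ref{thm:orthogonal} then transports the $(\bo,\ff)$ factorization system from the vertical 2-category of each exact equipment directly to $\MMonCat$ and $\CCompCat$ respectively.

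The only step that might warrant a sentence of commentary is checking that the abstractly-defined $\bo$ and $\ff$ classes from Definition~\ref{def:boff} match the expected notions in these 2-categories. For $\dMonProf$ this follows from the explicit description of collapses in the proof of Proposition~\ref{prop:MonProf_exact}: the collapse of a monoid $M\colon\cat{C}\tickar\cat{C}$ is identity-on-objects, and a restriction square at a functor $f$ being cartesian amounts to the monoidal profunctor $\cat{D}(f(-),f(-))$ agreeing with $\cat{C}(-,-)$, i.e.\ $f$ is fully faithful. The same characterization transfers to $\dCompProf$ through the fully faithful local equivalence $\UCM\colon\dCompProf\to\dMonProf$.

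There is no substantive obstacle here, since exactness has already been verified and Theorem~\ref{thm:orthogonal} does all the work. The proof is essentially a one-line application of the theorem.

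\begin{proof}
By Propositions~\ref{prop:MonProf_exact} and \ref{prop:CompProf_exact}, the equipments $\dMonProf$ and $\dCompProf$ are exact. Theorem~\ref{thm:orthogonal} thus endows their vertical 2-categories $\VVer(\dMonProf)\iso\MMonCat$ and $\VVer(\dCompProf)\iso\CCompCat$ with 2-orthogonal $(\bo,\ff)$ factorization systems.
\end{proof}
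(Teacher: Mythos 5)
Your proof is correct and matches the paper exactly: the paper records this corollary with no written proof, introducing it as ``the following consequence of Theorem~\ref{thm:orthogonal}'' immediately after establishing exactness of $\dMonProf$ and $\dCompProf$, which is precisely the one-line application you give. Your extra remark identifying the abstract $\bo$/$\ff$ classes with the usual bijective-on-objects/fully faithful functors is not needed for the statement itself, but it correctly anticipates what the paper later proves as Proposition~\ref{prop:(bo,ff)_really_is}.
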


We abuse notation slightly and use the same name ($\bo,\ff$) for the factorization systems on
different categories.

The exactness of $\dTrProf$ is significantly more difficult to establish. We use the exactness of
$\dCompProf$ and the close relationship between $\dTrProf$ and $\dCompProf$ provided by the $\Int$
construction. We first prove a lemma which is needed for the proof. Recall the adjunction
\eqref{dia:traced_compact_adjunction} and write $\eta_{\cat{T}}\colon\cat{T}\to\UCT\Int(\cat{T})$
for the unit component on $\cat{T}\in\TTrCat$.

\begin{lemma}
    \label{lem:Tr_bo_Int}
  Let $\cat{T}$ be a traced category, $\cat{C}$ a compact category, and
  $F\colon\Int(\cat{T})\twoheadrightarrow \cat{C}$ a bijective-on-objects monoidal functor. Consider
  the factorization in $\MonCat$ of $\UCM F\circ\UTM\eta_{\cat{T}}$ into a bijective-on-objects $G$
  followed by a fully faithful $H$, as follows:
  \[ \begin{tikzcd}[column sep=large]
    \UTM \cat{T} \ar[r,hook,"\UTM\eta_{\cat{T}}"] \ar[d,two heads,"\exists G"']
      & \UCM\Int(\cat{T}) \ar[d,two heads,"\UCM F"] \\
    \cat{M} \ar[r,hook,"\exists H"'] & \UCM \cat{C}.
  \end{tikzcd} \]
  There is a unique trace structure on $\cat{M}$, i.e.\ a unique traced category $\cat{T}'$ with
  $\UTM \cat{T}'=\cat{M}$, such that the factorization lifts to $\TrCat$:
  \[ \begin{tikzcd}[column sep=large]
    \cat{T} \ar[r,hook,"\eta_{\cat{T}}"] \ar[d,two heads,"G"']
      & \UCT\Int(\cat{T}) \ar[d,two heads,"\UCT F"] \\
    \cat{T}' \ar[r,hook,"H"'] & \UCT \cat{C}.
  \end{tikzcd} \]
  Moreover, there is an isomorphism $\alpha\colon\Int(\cat{T}')\iso \cat{C}$ such that
  $\UCT\alpha\circ \eta_{\cat{T}'}=H$ and $\alpha\circ\Int(G)=F$.
\end{lemma}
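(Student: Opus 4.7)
The plan is to exploit the adjunction $\Int\dashv\UCT$ of \eqref{dia:traced_compact_adjunction} together with the three properties recorded in Remark~\ref{rmk:fully_faithful_and_trace}.

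First, the monoidal category $\UCM\cat{C}$ underlies the traced category $\UCT\cat{C}$, and $H\colon\cat{M}\hookrightarrow\UCM\cat{C}$ is fully faithful and strict monoidal; so Remark~\ref{rmk:fully_faithful_and_trace}(ii) produces a unique trace structure on $\cat{M}$---yielding the traced category I shall call $\cat{T}'$---for which $H$ lifts to a traced functor $H\colon\cat{T}'\to\UCT\cat{C}$. This simultaneously delivers the uniqueness of $\cat{T}'$.

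Next, to see that $G\colon\cat{T}\to\cat{T}'$ also preserves traces, I observe that both $\eta_{\cat{T}}$ and $\UCT F$ are morphisms in $\TTrCat$ (the first as the unit of the 2-adjunction at $\cat{T}$, the second by 2-functoriality of $\UCT$), so the composite $\UCT F\circ\eta_{\cat{T}}$ is a traced functor. Its underlying strict symmetric monoidal functor coincides with $H\circ G$; since $H$ is faithful and trace-preserving, this forces $G$ to preserve traces as well.

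For the isomorphism $\alpha$, I transpose $H\colon\cat{T}'\to\UCT\cat{C}$ across the adjunction to obtain $\alpha\colon\Int(\cat{T}')\to\cat{C}$ satisfying $\UCT\alpha\circ\eta_{\cat{T}'}=H$ by construction; concretely, $\alpha$ is the composite of $\Int(H)\colon\Int(\cat{T}')\to\Int(\UCT\cat{C})$ with the counit of $\Int\dashv\UCT$ at $\cat{C}$. To check that $\alpha\circ\Int(G)=F$, I transpose both sides back: the mate of $\alpha\circ\Int(G)$ is
\[
  \UCT\alpha\circ\UCT\Int(G)\circ\eta_{\cat{T}}=\UCT\alpha\circ\eta_{\cat{T}'}\circ G=H\circ G=\UCT F\circ\eta_{\cat{T}},
\]
which is the mate of $F$. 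Finally, $\alpha$ is bijective on objects because $F=\alpha\circ\Int(G)$ is and $\Int(G)$ is (by Remark~\ref{rmk:fully_faithful_and_trace}(iv) applied to the $\bo$ map $G$), and $\alpha$ is fully faithful because $\Int(H)$ is (by Remark~\ref{rmk:fully_faithful_and_trace}(iv) applied to the fully faithful $H$) while the counit at $\cat{C}$ is an equivalence by Remark~\ref{rmk:fully_faithful_and_trace}(iii). Being both $\bo$ and $\ff$, $\alpha$ is an isomorphism. The only delicate point is verifying that $\UCT F$ is indeed a traced functor---something I do not expect to be hard, as it is immediate from the 2-functoriality of $\UCT\colon\CCompCat\to\TTrCat$.
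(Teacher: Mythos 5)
Your proposal is correct and follows essentially the same route as the paper's proof: the unique trace on $\cat{M}$ comes from Remark~\ref{rmk:fully_faithful_and_trace}(ii) applied to the fully faithful $H$, the tracedness of $G$ is deduced from that of $\UCT F\circ\eta_{\cat{T}}=H\circ G$ via faithfulness of $H$, and $\alpha$ is the adjunct of $H$, shown to be both $\ff$ and $\bo$ and hence an isomorphism. Your explicit mate computation for $\alpha\circ\Int(G)=F$ and the factorization of $\alpha$ through $\Int(H)$ and the counit merely spell out steps the paper leaves to its diagram.
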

\begin{proof}
  This derives mainly from basic properties of the $\Int$ construction; see
  Remark~\ref{rmk:fully_faithful_and_trace}. Since $H\colon\cat{M}\to\UCM\cat{C}$ is fully faithful,
  the trace on $\UCT\cat{C}$ uniquely determines the desired trace structure on $\cat{T}'$ by which
  $H$ is a traced functor. It also follows that $G$ respects the trace in $\cat{T}$ since $\UCT
  F\circ\eta_{\cat{T}}$ does.

  For the final claim, consider the diagram
  \[ \begin{tikzcd}[column sep=4em, row sep=5ex]
    \cat{T} \ar[r,hook,"\eta_{\cat{T}}"] \ar[d,two heads,"G"']
      & \UCT\Int(\cat{T}) \ar[d,two heads,"\UCT\Int(G)"']
        \ar[ddr,two heads,bend left=25,"\UCT F"] &[between origins] \\
    \cat{T}' \ar[r,hook,"\eta_{\cat{T}'}"] \ar[drr,hook,bend right=15,"H"']
      & \UCT\Int(\cat{T}') \ar[dr,"\UCT\alpha"' {inner sep=0pt,pos=.25}] & \\[-10]
      &&\UCT \cat{C}
  \end{tikzcd} \]
  where $\alpha\colon\Int(\cat{T}')\to \cat{C}$ is the adjunct of $H$, which is fully faithful since
  $H$ is. Since $G$ is $\bo$, $\UCT\Int(G)$ will be $\bo$ as well. But $\UCT F$ is $\bo$, so $\UCT
  \alpha$ and hence $\alpha$ must be $\bo$ also. Since $\alpha$ is both $\ff$ and $\bo$, it is an
  isomorphism, completing the proof.
\end{proof}

\begin{proposition}
    \label{prop:TrProf_exact}
  The equipment $\dTrProf$ is exact.
\end{proposition}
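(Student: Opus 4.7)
The plan is to reduce exactness of $\dTrProf$ to the already-proved exactness of $\dCompProf$ (Proposition~\ref{prop:CompProf_exact}), using Lemma~\ref{lem:Tr_bo_Int} as the bridge that transfers collapses across the $\Int\dashv\UCT$ adjunction. Fix a monoid $M\colon\cat{T}\tickar\cat{T}$ in $\dTrProf$. By definition of $\dTrProf=\Int^*(\dCompProf)$ this is the same datum as a monoid $M$ on $\Int(\cat{T})$ in $\dCompProf$, so exactness of $\dCompProf$ gives a compact collapse $C$ and a bijective-on-objects universal embedding $i\colon\Int(\cat{T})\twoheadrightarrow C$ with cartesian 2-cell $\vec{\imath}$. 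Applying Lemma~\ref{lem:Tr_bo_Int} to the composite $\UCM i\circ\UTM\eta_{\cat{T}}$ then produces a traced category $\cat{T}'$, a $\bo$ traced functor $G\colon\cat{T}\twoheadrightarrow\cat{T}'$, and an isomorphism $\alpha\colon\Int(\cat{T}')\xrightarrow{\iso}C$ with $\UCT\alpha\circ\eta_{\cat{T}'}=H$ and $\alpha\circ\Int(G)=i$. I propose $\cat{T}'$ as the $\dTrProf$-collapse of $M$, with universal embedding $G$ and 2-cell obtained by transporting $\vec{\imath}$ back through $\alpha$; this 2-cell remains cartesian because cartesian 2-cells are preserved under the strict pullback defining $\dTrProf$.

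For the universal property, consider any embedding $(f,\vec{f}\mspace{2mu})\colon(\cat{T},M)\to\cat{U}$ in $\dTrProf$, i.e.\ a traced functor $f\colon\cat{T}\to\cat{U}$ and an embedding 2-cell. Pushed into $\dCompProf$ this is an embedding of $M$ into $\Int(\cat{U})$, so the $\dCompProf$ universal property yields a unique compact functor $\phi\colon C\to\Int(\cat{U})$ with $\phi\circ i=\Int(f)$. Precomposing $\UCT\phi\circ H\colon\cat{T}'\to\UCT\Int(\cat{U})$ with $G$ gives $\UCT\Int(f)\circ\eta_{\cat{T}}=\eta_{\cat{U}}\circ f$ by naturality of $\eta$, so on objects $\UCT\phi\circ H$ lands in the image of $\eta_{\cat{U}}$ (since objects of $\cat{T}'$ are in the image of $G$, as $G$ is $\bo$). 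Because $\eta_{\cat{U}}\colon\cat{U}\hookrightarrow\UCT\Int(\cat{U})$ is fully faithful (Remark~\ref{rmk:fully_faithful_and_trace}(i)), $\UCT\phi\circ H$ factors uniquely through $\eta_{\cat{U}}$ as a traced functor $\tilde f\colon\cat{T}'\to\cat{U}$. Faithfulness of $\eta_{\cat{U}}$ forces $\tilde f\circ G=f$, and the corresponding 2-cell identity is checked on the $\dCompProf$ side by unwinding through $\alpha$. Uniqueness of $\tilde f$ follows from uniqueness of $\phi$ and full-faithfulness of $\eta_{\cat{U}}$.

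For condition (2) of Definition~\ref{def:exact_equipment}, note that once collapses in $\dTrProf$ are given by the $\cat{T}'$ above, the restriction functor unwinds cleanly: for monoids $M$ on $\cat{T}$ and $N$ on $\cat{U}$ with collapses $\cat{T}',\cat{U}'$, we have by definition of $\dTrProf$ the identification $\HHor(\dTrProf)(\cat{T}',\cat{U}')=\HHor(\dCompProf)(\Int(\cat{T}'),\Int(\cat{U}'))$, and $\alpha$ identifies these with $\HHor(\dCompProf)(C_M,C_N)$ between the $\dCompProf$-collapses. Likewise $(M,N)$-bimodules in $\dTrProf$ agree with those in $\dCompProf$ since proarrows, 2-cells, and horizontal composition are pulled back from $\dCompProf$. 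The required equivalence is then exactly the one provided by exactness of $\dCompProf$.

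The main obstacle is the universal property argument in paragraph two: one must pay attention to the fact that an ``embedding into $\cat{U}$ in $\dTrProf$'' is \emph{not} the same as an arbitrary compact embedding into $\Int(\cat{U})$---only those whose vertical component is $\Int(f)$ for a traced $f$---and one must verify that the compact functor produced by the $\dCompProf$ universal property descends along $\eta_{\cat{U}}$ to a genuine traced functor, which is where Lemma~\ref{lem:Tr_bo_Int} and the full-faithfulness of the adjunction unit become essential. Everything else amounts to formally transporting structure through the pullback $\Int^*$, and checking that cartesian 2-cells and horizontal composition behave compatibly.
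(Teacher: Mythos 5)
Your proposal is correct and follows essentially the same route as the paper: reduce to the exactness of $\dCompProf$, transport the compact collapse back along the $\Int\dashv\UCT$ adjunction via Lemma~\ref{lem:Tr_bo_Int}, verify the universal property using the full-faithfulness of $\eta$ together with the $\bo$/$\ff$ orthogonality, and deduce condition (2) from the local equivalence $\dTrProf\to\dCompProf$. The only difference is presentational---the paper packages the universal-property check as a pasting of pullback squares (via Lemma~\ref{lemma:embed_for_LE}) where you argue element-wise---so there is nothing substantive to add.
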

\begin{proof}
  Let $M\colon \cat{T}\tickar \cat{T}$ be a monoid in $\dTrProf$. By definition of $\dTrProf$ this
  is a monoid $M\colon\Int(\cat{T})\tickar\Int(\cat{T})$ in $\dCompProf$, so $M=\Mon(\Int)(M)$ (in
  the language of Lemma~\ref{lemma:embed_for_LE}). Define $\Col{M}_{\MTCfont{C}}$ and
  $(i_M,\vec{\imath}_M)\colon (\Int(\cat{T}),M)\to\Col{M}_{\MTCfont{C}}$ to be the collapse
  embedding of $M$ in $\dCompProf$. Then applying Lemma~\ref{lem:Tr_bo_Int} with $F=i_M$ gives a
  traced category $\Col{M}_{\MTCfont{T}}$ and a $\bo$ functor
  $i'_M\colon\cat{T}\to\Col{M}_{\MTCfont{T}}$ as in the diagram below:
   \begin{equation} \begin{tikzcd}[column sep=large]
         \label{eqn:collapses_in_traced}
      \cat{T} \ar[r,hook,"\eta_{\cat{T}}"] \ar[d,two heads,"i'_M"']
         & \UCT\Int(\cat{T}) \ar[d,two heads,"\UCT i_M"] \\
      \Col{M}_{\MTCfont{T}} \ar[r,hook] & \UCT\Col{M}_{\MTCfont{C}}.
   \end{tikzcd} \end{equation}
  To see that $\Col{M}_{\MTCfont{T}}$ is a collapse in $\dTrProf$ we must establish the bijection
  $\Emb_{\mathrm{Tr}}(M,\cat{T}')\iso\TrCat(\Col{M}_{\MTCfont{T}},\cat{T}')$, natural in the traced
  category $\cat{T}'$.

  Using the adjunction bijection and precomposition with the inverse of
  $\alpha\colon\Int(\Col{M}_{\MTCfont{T}})\iso\Col{M}_{\MTCfont{C}}$ from Lemma~\ref{lem:Tr_bo_Int},
  we get an isomorphism
  \[
    \TrCat(\Col{M}_{\MTCfont{T}},\UCT\Int(\cat{T}'))
    \iso \CompCat(\Int(\Col{M}_{\MTCfont{T}}),\Int(\cat{T}'))
    \iso \CompCat(\Col{M}_{\MTCfont{C}},\Int(\cat{T}')).
  \]
  This isomorphism is the top right morphism in the diagram
  \[ \begin{tikzcd}[column sep=between origins,row sep=between origins]
    \TrCat(\Col{M}_{\MTCfont{T}},\cat{T}')
        \ar[r,"\eta_{\cat{T}'}\circ\textrm{--}"]
        \ar[d,"\textrm{--}\circ i'_M"']
        \ar[dr,phantom,"\lrcorner" very near start]
      &[2.4em,between borders] \TrCat(\Col{M}_{\MTCfont{T}},\UCT\Int(\cat{T}'))
        \ar[d,"\textrm{--}\circ i'_M"']
        \ar[r,"\iso"]
        \ar[dr,phantom,"\lrcorner" very near start]
      &[12.8em] \CompCat(\Col{M}_{\MTCfont{C}},\Int(\cat{T}')) \ar[d,"-\circ i_M"'] \\[5.4em]
    \TrCat(\cat{T},\cat{T}')
        \ar[r,"\eta_{\cat{T}'}\circ\textrm{--}"]
        \ar[rr,bend right=12,"\Int"']
      & \TrCat(\cat{T},\UCT\Int(\cat{T}'))
        \ar[r,"\iso"]
      & \CompCat(\Int(\cat{T}),\Int(\cat{T}'))
  \end{tikzcd} \]
  The right square commutes by the naturality of the $(\Int,\UCT)$ adjunction, together with the
  equality $i_M=\alpha\circ\Int(i'_M)$ from Lemma~\ref{lem:Tr_bo_Int}. The left square is a
  pullback, by the orthogonality of $i'_M\in\bo$ and $\eta_{\cat{T}'}\in\ff$, and the right square
  is a pullback because the top and bottom maps are isomorphisms. Hence the outer square is a
  pullback as well. Since $\Col{M}_{\MTCfont{C}}$ is a collapse in $\dCompProf$, there is a
  bijection $\Emb_{\mathrm{Cp}}(M,\Int(\cat{T}'))\iso\CompCat(\Col{M}_{\MTCfont{C}},\Int(\cat{T}'))$
  so by Lemma~~\ref{lemma:embed_for_LE}, the outer pullback produces the desired natural isomorphism
  $\Emb_{\mathrm{Tr}}(M,\cat{T}')\iso\TrCat(\Col{M}_{\MTCfont{T}},\cat{T}')$.

  Since the trivial monoid on $\Col{M}_{\MTCfont{T}}$ in $\dTrProf$ is by definition the trivial
  monoid on $\Int(\Col{M}_{\MTCfont{T}})\iso\Col{M}_{\MTCfont{C}}$ in $\dCompProf$, the collapse
  embedding $M\Rightarrow\Col{M}_{\MTCfont{T}}$ is (after composition with the isomorphism $\alpha$)
  just the collapse $(i_M,\vec{\imath}_M)$ in $\dCompProf$, and hence is cartesian. Since the
  inclusion $\dTrProf\to\dCompProf$ is a local equivalence and $\dCompProf$ is exact, the second
  condition of Definition~\ref{def:exact_equipment} follows immediately.
\end{proof}

\begin{proposition}
    \label{prop:(bo,ff)_really_is}
  In $\dMonProf$, $\dTrProf$, and $\dCompProf$, a vertical map is $\ff$ (resp.\ $\bo$) if and only if
  it is fully faithful (resp.\ bijective-on-objects) in the usual sense.
\end{proposition}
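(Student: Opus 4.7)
The plan is to reduce both characterizations to the explicit descriptions of unit proarrows and collapses already obtained in the exactness proofs, and then to invoke the $(\bo,\ff)$ factorization system from Theorem \ref{thm:orthogonal}.

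For the $\ff$ case: by Definition \ref{def:boff}, $F\colon\cat{C}\to\cat{D}$ is $\ff$ iff its unit square is cartesian, i.e.\ $\unit(\cat{C})\iso\unit(\cat{D})(F,F)$. In $\dMonProf$, $\unit(\cat{D})$ is the hom-profunctor $\Hom_{\cat{D}}$ with its canonical monoidal structure, and restriction along $(F,F)$ is just precomposition, yielding $\Hom_{\cat{D}}(F{-},F{-})$. So the condition amounts to a natural isomorphism $\Hom_{\cat{C}}(x,y)\iso\Hom_{\cat{D}}(Fx,Fy)$ (the monoidal coherence of which is automatic from naturality), i.e.\ $F$ is fully faithful in the usual sense. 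The inclusions $\dTrProf\hookrightarrow\dCompProf\hookrightarrow\dMonProf$ are fully faithful local equivalences, so they preserve and reflect unit proarrows and cartesian 2-cells; combined with Remark \ref{rmk:fully_faithful_and_trace}(iv), which transfers the condition across $\Int$, this propagates the $\ff$ characterization to all three equipments.

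For one direction of the $\bo$ case: by Proposition \ref{prop:Mon_vs_bo}, every $\bo$ map is, up to isomorphism, a collapse embedding $i_M$ of some monoid. The explicit collapses built in the proofs of Propositions \ref{prop:MonProf_exact}, \ref{prop:CompProf_exact}, and \ref{prop:TrProf_exact} all present $i_M$ as bijective on objects in the usual sense: in $\dMonProf$ the collapse $\Col{M}$ has the same object set as the source and $i_M$ is identity-on-objects; in $\dCompProf$ the collapse coincides with that of $\dMonProf$; and in $\dTrProf$, diagram \eqref{eqn:collapses_in_traced} exhibits $i'_M$ as the bijective-on-objects leg of an $(\bo,\ff)$ factorization in $\MonCat$, which is bijective on objects on $\cat{T}$ as a traced functor by Remark \ref{rmk:fully_faithful_and_trace}(iv). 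Hence every abstract $\bo$ map is bijective on objects.

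For the converse, apply the orthogonal factorization system of Theorem \ref{thm:orthogonal} to an arbitrary bijective-on-objects $F\colon\cat{C}\to\cat{D}$ to write $F=m\circ e$ with $e\in\bo$ and $m\in\ff$. By the two previous paragraphs, $e$ is bijective on objects and $m$ is fully faithful in the usual sense; since $F$ is bijective on objects and $e$ is, so is $m$; and a fully faithful bijective-on-objects functor is an isomorphism. Thus $F\iso e$ lies in $\bo$. The only delicate point in the whole argument is the $\dTrProf$ case of the $\bo$ direction, where collapses are constructed indirectly via $\Int$, but the needed explicit description was already extracted in the proof of Proposition \ref{prop:TrProf_exact}, so no new work is required.
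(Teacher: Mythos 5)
Your proposal is correct, and the $\ff$ half together with the propagation to $\dCompProf$ and $\dTrProf$ matches the paper's route (the paper phrases the $\dMonProf$ step as ``the forgetful double functor to $\dProf$ creates cartesian 2-cells,'' which is the same computation you do by hand). Where you genuinely diverge is the $\bo$ half. The paper treats it symmetrically with the $\ff$ half: it argues that the forgetful functors create \emph{collapse} 2-cells as well, so that a map is $\bo$ in $\dMonProf$ (resp.\ $\dCompProf$, $\dTrProf$) exactly when it is $\bo$ in $\dProf$, and then leans on the prototypical fact that $\bo$ in $\dProf$ is bijective-on-objects; the only bespoke work is the final check that a traced functor $F$ is bijective-on-objects iff $\Int(F)$ is. You instead split the $\bo$ statement into two implications: for $\bo\Rightarrow$ bijective-on-objects you use that every $\bo$ map is, up to isomorphism under its domain, the explicitly constructed collapse embedding (identity-on-objects in $\dMonProf$ and $\dCompProf$, and the $\bo$ leg of the factorization in \eqref{eqn:collapses_in_traced} for $\dTrProf$); for the converse you factor an arbitrary bijective-on-objects functor through the $(\bo,\ff)$ system of Theorem~\ref{thm:orthogonal} and observe that the $\ff$ leg, being both fully faithful and bijective-on-objects in the usual sense, is an isomorphism. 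This cancellation trick is a nice way to get the converse without verifying directly that the restriction square of an arbitrary bijective-on-objects functor satisfies the universal property of a collapse --- a verification the paper leaves implicit in its appeal to $\dProf$. The trade-off is that your argument is longer and leans on Proposition~\ref{prop:Mon_vs_bo} and the exactness of all three equipments up front, whereas the paper's creation argument is shorter and more uniform. Both are sound.
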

\begin{proof}
  It is clear that the forgetful double functor $U\colon\dMonProf\to\dProf$ creates cartesian 2-cells: a
  2-cell in $\dMonProf$ is cartesian if and only if its underlying 2-cell in $\dProf$ is cartesian.
  In particular, this implies that a vertical map in $\dMonProf$ is $\ff$ if and only if its
  underlying map in $\dProf$ is $\ff$, hence is fully faithful in the usual sense.

  By the construction of collapses in $\dMonProf$, it is easy to see that $U$ similarly creates
  collapse 2-cells. Thus a vertical map in $\dMonProf$ is $\bo$ if and only if its underlying map in
  $\dProf$ is $\bo$, hence is bijective-on-objects in the usual sense.

  Because the forgetful double functor $\UCM\colon\dCompProf\to\dMonProf$ is a fully faithful local
  equivalence, it follows that it too creates cartesian 2-cells, and from the construction of
  collapses in $\dCompProf$ it also creates collapse 2-cells. Hence a vertical map in $\dCompProf$
  is in $\bo/\ff$ if and only if its underlying map in $\dMonProf$ is.

  Likewise, $\Int\colon\dTrProf\to\dCompProf$ creates cartesian and collapse 2-cells. It only remains 
  to show that a traced functor $F\colon\cat{T}\to\cat{T}'$ is fully faithful (resp.\
  bijective-on-objects) in the usual sense if and only if $\Int(F)$ is. For fully faithfulness, this
  follows easily from the fact that the unit $\eta\colon\cat{T}\to\UCT\Int T$ is fully faithful. It
  is also clear that $\Int(F)$ is bijective-on-objects by construction when $F$ is. Finally, suppose
  $\Int(F)$ is bijective-on-objects. Because the unit $\eta$ is injective-on-objects, $F$ must be
  injective-on-objects. If $x\in\cat{T}'$ is any object, then there is an object $(t_1,t_2)\in\Int
  T$ such that $\Int(F)(t_1,t_2)=(x,I)$, but $\Int(F)(t_1,t_2)=(Ft_1,Ft_2)$, hence $Ft_1=x$, showing
  that $F$ is also surjective-on-objects.
\end{proof}

Finally, we record for reference the application of Theorem~\ref{thm:Mod_vs_bo} to the exact
equipments $\dTrProf$ and $\dCompProf$.

\begin{corollary}\label{cor:Mod_vs_bo}
There are equivalences of equipments
  \[
  \begin{tikzcd}[row sep=2.5ex, column sep=0em]
    \dMod(\dTrProf) \ar[rr,"\equiv"]\ar[rd,"\Col{-}"' pos=.3] && \dTrProf^{\bo}\ar[ld,"\cod" pos=.3]\\
    &\dTrProf
  \end{tikzcd}
  \quad\tn{and}\quad
  \begin{tikzcd}[row sep=2.5ex, column sep=0em]
    \dMod(\dCompProf) \ar[rr,"\equiv"]\ar[rd,"\Col{-}"' pos=.3] && \dCompProf^{\bo}\ar[ld,"\cod" pos=.3]\\
    &\dCompProf
  \end{tikzcd}
  \]
\end{corollary}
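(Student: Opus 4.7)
The plan is to simply invoke Theorem~\ref{thm:Mod_vs_bo} twice, once for $\dcat{D}=\dTrProf$ and once for $\dcat{D}=\dCompProf$. Since that theorem has exactly two hypotheses on $\dcat{D}$---that it be an exact equipment and that it have local reflexive coequalizers---the entire content of the corollary is a matter of verifying that both equipments satisfy both hypotheses, which has already been done in the preceding material.

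More precisely, I would first note that exactness of $\dCompProf$ is Proposition~\ref{prop:CompProf_exact}, and exactness of $\dTrProf$ is Proposition~\ref{prop:TrProf_exact}. Second, both equipments have local reflexive coequalizers by the proposition at the end of Section~\ref{sec:monoidal_profunctors} (whose proof reduces the claim to $\dMonProf$ via the fully faithful local equivalences $\Int^*$ and $\UCM^*$, and then uses that reflexive coequalizers commute with finite products in $\Set$). With both hypotheses in hand for each equipment, Theorem~\ref{thm:Mod_vs_bo} immediately produces the two equivalences of equipments, each commuting with the relevant structural functors $\Col{-}$ and $\cod$ down to $\dTrProf$ and $\dCompProf$ respectively.

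There is essentially no obstacle here: the corollary is a direct bookkeeping consequence of the theorem, once one observes that exactness and local reflexive coequalizers have been established in the prior two sections. The only thing worth explicitly recording is that the commuting triangle over $\dTrProf$ (resp.~$\dCompProf$) displayed in the corollary is precisely the one produced by Theorem~\ref{thm:Mod_vs_bo} applied to $\dcat{D}=\dTrProf$ (resp.~$\dcat{D}=\dCompProf$), so no additional compatibility check is needed.
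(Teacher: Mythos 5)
Your proposal is correct and matches the paper exactly: the paper states this corollary with no separate proof, introducing it as ``the application of Theorem~\ref{thm:Mod_vs_bo} to the exact equipments $\dTrProf$ and $\dCompProf$,'' with exactness supplied by Propositions~\ref{prop:CompProf_exact} and~\ref{prop:TrProf_exact} and local reflexive coequalizers by the proposition at the end of Section~\ref{sec:monoidal_profunctors}. Nothing further is needed.
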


\section{Special properties of $\dCompProf$}
  \label{sec:special_CompProf}

In Section~\ref{sec:exactness_proofs}, we saw that the equipments $\dTrProf$ and $\dCompProf$ are
exact, connecting traced and compact categories to the machinery reviewed in
Section~\ref{chap:background_equipments}. All that has come before can be seen as general machinery
used to translate the statement of \ref{thm:TheoremB} into a more convenient form. In this section,
we collect the properties unique to traced and compact categories, and use these to prove
\ref{thm:TheoremB} via this translation.

Before we get to that, though, we must quickly say how we talk about (co)presheaves in the language
of equipments.

\subsection{Internal copresheaves and endo-proarrows in an equipment}

Copresheaves on a category $\cat{C}$ can be identified with profunctors $1\tickar\cat{C}$ in
$\dProf$. Motivated by this, we will think of proarrows $1\tickar c$ in any equipment $\dcat{D}$
with a terminal object 1 as ``internal copresheaves'' on the object $c$. For each object, there is a
category of copresheaves $\HHor(\dcat{D})(1,c)$. We can give a direct construction of the
bifibration over $\dcat{D}_0$ whose fiber over an object $c$ is the category of copresheaves on $c$:

\begin{definition}
    \label{def:copresheaves}
  Let $\dcat{D}$ be an equipment with a terminal object $1\in\dcat{D}_0$.%
  \footnote{
    In fact, such a definition makes sense for any object of $d\in\dcat{D}_0$, but we will only use the case $d=1$.
  }
  We define the category $\CPsh(\dcat{D})$, bifibered over $\dcat{D}_0$, by the strict pullback of
  categories
  \begin{equation*}
    \begin{tikzcd}
      \CPsh(\dcat{D}) \ar[r] \ar[d,two heads,"\MOb"']
          \ar[dr,phantom,"\lrcorner",very near start]
        & \dcat{D}_1 \ar[d,two heads,"{(\lframe,\rframe)}"] \\
      1\times\dcat{D}_0 \ar[r,"1\times\dcat{D}_0"']
        & \dcat{D}_0\times\dcat{D}_0.
    \end{tikzcd}
  \end{equation*}
\end{definition}

\begin{lemma}
    \label{lem:Psh_pullback}
  Let $F\colon\dcat{C}\to\dcat{D}$ be an equipment functor. Suppose that $\dcat{C}_0$ and
  $\dcat{D}_0$ have terminal objects which are preserved by $F_0$. Then there is an induced morphism
  of fibrations
  \begin{equation} \begin{tikzcd}
      \label{eq:CPsh_square}
    \CPsh(\dcat{C}) \ar[r,"\tilde{F}"] \ar[d,two heads,"\MOb"'] 
      & \CPsh(\dcat{D}) \ar[d,two heads,"\MOb"] \\
    \dcat{C}_0 \ar[r,"F_0"']
      & \dcat{D}_0.
  \end{tikzcd} \end{equation}
  Moreover, if $F$ is a local equivalence, then~\eqref{eq:CPsh_square} is a pseudo-pullback.
\end{lemma}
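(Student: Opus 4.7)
The plan is to construct $\tilde F$ by applying $F_1$ to the underlying proarrow with left frame $1$, and then to deduce the pseudo-pullback claim via the pullback pasting lemma.

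First I would construct $\tilde F$ concretely. An object of $\CPsh(\dcat{C})$ is an object $c\in\dcat{C}_0$ together with a proarrow $M\colon 1_{\dcat{C}}\tickar c$, and I set $\tilde F(c,M)\coloneqq(F_0(c),F_1(M))$, viewing $F_1(M)$ as a proarrow $1_{\dcat{D}}\tickar F_0(c)$ via the canonical (and essentially unique) terminal-object isomorphism $F_0(1_{\dcat{C}})\cong 1_{\dcat{D}}$. Morphisms are handled similarly via $F_1$, and the square~\eqref{eq:CPsh_square} commutes strictly from the equalities $\lframe\circ F_1=F_0\circ\lframe$ and $\rframe\circ F_1=F_0\circ\rframe$ built into the definition of a double functor. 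To conclude the first assertion I need only check that $\tilde F$ preserves cartesian lifts; but a cartesian morphism in $\CPsh(\dcat{C})$ over $f\colon c\to c'$ is nothing but a cartesian 2-cell in $\dcat{C}_1$ over $(\id_{1_{\dcat{C}}},f)$, and $F$ preserves such cartesian 2-cells by virtue of being a strong double functor.

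For the pseudo-pullback claim I would paste~\eqref{eq:CPsh_square} onto the defining pullback of $\CPsh(\dcat{D})$ along $1\times\dcat{D}_0\to\dcat{D}_0\times\dcat{D}_0$ from Definition~\ref{def:copresheaves}. The right-hand square of this pasting is a strict (hence pseudo-)pullback by construction. Under the local-equivalence hypothesis, the outer pasted square agrees---after identifying $\CPsh(\dcat{C})$ with the strict pullback of $\dcat{C}_1\twoheadrightarrow\dcat{C}_0\times\dcat{C}_0$ along $1\times\dcat{C}_0\to\dcat{C}_0\times\dcat{C}_0$---with the composition of that defining pullback with the local-equivalence square~\eqref{eqn:local_equiv} of $F$; since the latter is a pseudo-pullback by hypothesis, so is the outer pasted square. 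The pullback pasting lemma then gives that~\eqref{eq:CPsh_square} is a pseudo-pullback. As an independent sanity check via Remark~\ref{rem:strict_vs_pseudo_pullback}, this is equivalent to the fiberwise equivalences $\HHor(\dcat{C})(1_{\dcat{C}},c)\To{\equiv}\HHor(\dcat{D})(1_{\dcat{D}},F_0(c))$ for each $c\in\dcat{C}_0$, which are instances of that remark's characterization of local equivalence.

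The only real subtlety will be handling terminal-object preservation at a strictness level compatible with the strict-pullback definition of $\CPsh$. Because terminal objects are unique up to unique isomorphism, this is purely a matter of bookkeeping; the substantive content of the lemma reduces to the pullback pasting lemma together with Remark~\ref{rem:strict_vs_pseudo_pullback}, with the equipment-functor axioms supplying preservation of cartesian cells.
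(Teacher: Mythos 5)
Your argument is correct and is essentially the paper's proof: the paper assembles the same data into a commuting cube whose left and right faces are the defining strict pullbacks of $\CPsh(\dcat{C})$ and $\CPsh(\dcat{D})$, obtains $\tilde F$ from the universal property of the right face, and deduces the pseudo-pullback claim by exactly your pasting argument (front face a pseudo-pullback by local equivalence, left and right faces pseudo-pullbacks via Remark~\ref{rem:strict_vs_pseudo_pullback}, hence the back face is one). Your explicit description of $\tilde F$ on objects and your check that it preserves cartesian lifts are harmless elaborations of what the paper leaves implicit.
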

\begin{proof}
  Consider the cube
  \[ \begin{tikzcd}[row sep={35,between origins}, column sep={55,between origins}]
    \CPsh(\dcat{C}) \ar[rr,"\tilde{F}"] \ar[dr] \ar[dd,two heads]
      &[-10] & \CPsh(\dcat{D}) \ar[dd,two heads] \ar[dr] &[-10] \\[-5]
    & \dcat{C}_1 \ar[rr,crossing over,"F_1" near start]
      && \dcat{D}_1 \ar[dd,two heads] \\
    1\times\dcat{C}_0 \ar[rr,"1\times F_0" pos=.75]
        \ar[dr,"1\times\dcat{C}_0"' {pos=.25,inner sep=2pt}]
      && 1\times\dcat{D}_0 \ar[dr,"1\times\dcat{D}_0"' {pos=.25,inner sep=2pt}] & \\[-5]
    & \dcat{C}_0\times\dcat{C}_0 \ar[rr,"F_0\times F_0"']
        \ar[from=uu,crossing over,two heads]
      && \dcat{D}_0\times\dcat{D}_0.
  \end{tikzcd} \]
  Since $F_0$ preserves terminal objects, the bottom face of the cube commutes. The left and right
  faces of the cube are strict pullbacks by definition, hence there is a unique $\tilde{F}$ making
  the cube commute.

  If $F$ is a local equivalence, then the front face is a pseudo-pullback. The left and right faces
  are strict pullbacks along fibrations, hence pseudo-pullbacks (see
  Remark~\ref{rem:strict_vs_pseudo_pullback}). It follows that the back face is a pseudo-pullback as
  well.
\end{proof}

\erase{
  \begin{example}
    If $\dcat{D}=\dProf$ then an object of $\CPsh(\dcat{D})$ is a pair $(\cat{C},H)$, where
    $\cat{C}$ is a category and $H\colon \cat{C}\to\Set$ is a functor. A morphism
    $(\cat{C},H)\to(\cat{C}',H')$ is a pair $(F,F^\sharp)$ of a functor and a natural
    transformation, as in the lax triangle
    \[ \begin{tikzcd}[column sep=.6cm]
      \cat{C}\ar[rr,"F"]\ar[rd,"H"']&\ar[d,phantom,"\overset{F^\sharp}{\Rightarrow}" near start]&\cat{C}'\ar[dl,"H'"]\\
      &\Set.
    \end{tikzcd} \]
  \end{example}
}

\begin{definition}
    \label{def:ptd}
  Given an equipment $\dcat{D}$, we define the fibration of \emph{endo-proarrows} by the strict pullback
  \[
  \begin{tikzcd}
    \End(\dcat{D}) \ar[d,two heads, "\MOb"'] \ar[r] \ar[dr,phantom,"\lrcorner" very near start]
      & \dcat{D}_1 \ar[d,two heads,"{(\lframe,\rframe)}"] \\
    \dcat{D}_0 \ar[r,"\Delta"']
      & \dcat{D}_0\times\dcat{D}_0.
  \end{tikzcd}
  \]
  We also define a fibration $\Ptd(\dcat{D})\twoheadrightarrow\dcat{D}_0$ whose objects are
  \emph{pointed endo-proarrows}, i.e.\ endo-proarrows $M\colon c\tickar c$ in $\dcat{D}$ equipped
  with a unit $i_M\colon \unit(c)\Rightarrow M$ as in \eqref{eqn:unit_and_mult} (but not a multiplication), and
  whose morphisms are 2-cells which preserve the units.
\end{definition}

\subsection{Copresheaves coincide with monoids in $\dCompProf$ and $\dTrProf$}

The component of the proof of \ref{thm:TheoremB} which is specific to traced and compact categories
is the following equivalence of fibrations:
\[ \begin{tikzcd}[column sep=-.5em]
  \CPsh(\dCompProf) \ar[rr,"\equiv"] \ar[dr,two heads,"\MOb"']
    && \Mon(\dCompProf) \ar[dl,two heads,"\MOb"] \\
  & \CompCat. &
\end{tikzcd} \]
We prove this equivalence in this section (which implies the corresponding equivalence for traced
categories).
To do so we introduce a third fibration---that of pointed endo-proarrows---and
establish its equivalence with each of $\CPsh(\dCompProf)$ and $\Mon(\dCompProf)$ in
Proposition~\ref{Prop:ptd_prof_equivalence} and Proposition~\ref{prop:unit_implies_monoid} below.
The introduction of $\Ptd(\dCompProf)$ is merely a convenient way of organizing the proof:
recovering a copresheaf from a monoidal endo-profunctor requires only a unit (and not a
multiplication), while for any monoidal endo-profunctor on a compact category, a unit extends
uniquely to a multiplication.

\begin{remark}
  One can think of compact categories as a categorification of groups, where duals of objects act
  like inverses of group elements. From this perspective, the results of this section can be seen as
  categorifications of basic facts from group theory.

  We can think of profunctors between compact categories as playing the role of relations between
  groups which are stable under multiplication. Pointed endo-profunctors act like reflexive
  relations, and monoids in profunctors act like reflexive and transitive relations. In fact, one
  can define an equipment of groups, group homomorphisms, and equivariant relations, in which
  monoids are precisely reflexive transitive relations. It is easy to see that copresheaves, i.e.\
  equivariant relations $1\tickar G$, are the same as subgroups of $G$.

  In this way, the equivalence $\CPsh(\dCompProf)\equiv\Mon(\dCompProf)$ categorifies the standard
  fact that a subgroup determines, and is determined by, the conjugacy congruence. The equivalence
  $\Ptd(\dCompProf)\equiv\Mon(\dCompProf)$ would seem to be saying that every reflexive relation
  (stable under multiplication) on a group is in fact transitive, which while true is perhaps less
  familiar than the conjugacy relation. 

  Note that in the definition of a Mal'cev category (see
  \cite{BorceuxBourn}) the last property above is singled out as characterizing categories in which some
  amount of classical group theory can be developed. By analogy, we might think of this section as
  proving that $\dCompProf$ is a ``Mal'cev equipment''.
\end{remark}

It will be helpful to work out what a monoid in $\dMonProf$ looks like using the bimodule notation
for profunctors. A unit for a monoidal profunctor $M\colon\cat{C}\tickar\cat{C}$ is a unit
$i\colon\Hom_{\cat{C}}\to M$ as in Example~\ref{ex:monoid_in_Prof} where
\begin{equation}
    \label{eq:MnProf_monoid_unit}
  i(\id_{I_{\cat{C}}})=I_M \quad\text{and}\quad i(f\otimes g)=i(f)\boxtimes i(g)
\end{equation}
for any morphisms $f$ and $g$ in $\cat{C}$. Similarly, the multiplication $\bullet$ on $M$ must
satisfy
\begin{gather}
  I_M\bullet I_M=I_M \label{eq:MnProf_monoid_I} \\
  (m_2\boxtimes m'_2)\bullet(m_1\boxtimes m'_1) = (m_2\bullet m_1)\boxtimes(m'_2\bullet m'_1)
    \label{eq:MnProf_monoid_exchange}
\end{gather}
for any $m_1\in M(c,d)$, $m'_1\in M(c',d')$, $m_2\in M(d,e)$, and $m'_2\in M(d'e')$, in addition to
the requirements from Example~\ref{ex:monoid_in_Prof}.

\begin{remark}
    \label{rem:suffices_for_monoidal_monoid}
  Equation \eqref{eq:MnProf_monoid_I} follows immediately from \eqref{eq:Prof_monoid_C} and the
  identification $i(\id_{I_{\cat{C}}})=I_M$. Thus, to prove that $i$ and $\bullet$ form a monoid in
  $\dMonProf$, it suffices to show \eqref{eq:MnProf_monoid_unit} and
  \eqref{eq:MnProf_monoid_exchange}, in addition to the requirements discussed in
  Remark~\ref{rem:suffices_for_monoid}.
\end{remark}

The next lemma shows that for any compact category $\cat{C}$ the standard natural equivalence
$\Hom_{\cat{C}}(a,b)\equiv\Hom_{\cat{C}}(I,a^*\otimes b)$ can be
extended to any pointed monoidal endo-profunctor on $\cat{C}$.

\begin{lemma}
    \label{Lem:comp_prof_bijection}
  Let $\cat{C}$ be a compact category. Given any pointed endo-profunctor $i\colon\Hom_\cat{C}\to M$
  in $\Ptd(\dCompProf)$, there is a natural bijection $M(a,b)\iso M(I,a^*\otimes b)$ for any objects
  $a,b\in \cat{C}$.
\end{lemma}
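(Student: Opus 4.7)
The strategy is to mimic the classical compact-category isomorphism $\cat{C}(a,b)\cong \cat{C}(I,a^*\otimes b)$ in the profunctor $M$, using the unit $i\colon\Hom_{\cat{C}}\to M$ and the monoidal structure $\boxtimes$ of $M$. Concretely, I would define mutually inverse maps
\[
  \phi(m) \coloneqq \bigl(i(\id_{a^*})\boxtimes m\bigr)\cdot\eta_a
  \qquad\text{and}\qquad
  \psi(n) \coloneqq (\epsilon_a\otimes\id_b)\cdot\bigl(i(\id_a)\boxtimes n\bigr),
\]
for $m\in M(a,b)$ and $n\in M(I,a^*\otimes b)$; note that $i(\id_{a^*})\boxtimes m\in M(a^*\otimes a,a^*\otimes b)$ and $i(\id_a)\boxtimes n\in M(a,a\otimes a^*\otimes b)$, so the remaining right/left actions by $\eta_a$ and $\epsilon_a\otimes\id_b$ land in the required hom-sets.

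For the roundtrip $\psi\phi(m) = m$, the key simplification is the identity
\[
  i(\epsilon_a)\boxtimes m \;=\; (I_M\cdot\epsilon_a)\boxtimes(m\cdot\id_a) \;=\; (I_M\boxtimes m)\cdot(\epsilon_a\otimes\id_a) \;=\; m\cdot(\epsilon_a\otimes\id_a),
\]
where the first equality uses the unit axiom $i(\epsilon_a) = I_M\cdot\epsilon_a$, the second is the bimodule exchange law for $\boxtimes$ with right actions, and the third uses $I_M\boxtimes m = m$. Combined with the monoidality $i(\id_a)\boxtimes i(\id_{a^*}) = i(\id_{a\otimes a^*})$ from \eqref{eq:MnProf_monoid_unit} and the bimodule law for commuting $i(\id_a)\boxtimes(-)$ past a right action by $\eta_a$, a short calculation reduces $\psi(\phi(m))$ to $m\cdot\bigl((\epsilon_a\otimes\id_a)\circ(\id_a\otimes\eta_a)\bigr)$. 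This equals $m$ by the compact-category zig-zag identity $(\epsilon_a\otimes\id_a)\circ(\id_a\otimes\eta_a) = \id_a$ in $\cat{C}$. The other roundtrip $\phi\psi(n) = n$ is symmetric: using instead the identity $i(\eta_a)\boxtimes n = (\eta_a\otimes\id_{a^*\otimes b})\cdot n$ (derived analogously from $i(\eta_a) = \eta_a\cdot I_M$), it reduces $\phi(\psi(n))$ to $\bigl((\id_{a^*}\otimes\epsilon_a\otimes\id_b)\circ(\eta_a\otimes\id_{a^*\otimes b})\bigr)\cdot n$, which collapses to $n$ via the companion zig-zag $(\id_{a^*}\otimes\epsilon_a)\circ(\eta_a\otimes\id_{a^*}) = \id_{a^*}$.

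Naturality in $a$ and $b$ then follows from the functoriality of $M$ and the naturality of $\eta$ and $\epsilon$ with respect to morphisms and duals in $\cat{C}$. The main obstacle is purely bookkeeping: one must carefully track which tensor factor each action is applied to, and the proof crucially relies on $i$ being \emph{monoidal}, not merely a profunctor morphism. Note that no use of a multiplication on $M$ is made, which is why the statement belongs to $\Ptd(\dCompProf)$ rather than $\Mon(\dCompProf)$; the compact structure of $\cat{C}$ alone---via its duals and zig-zag identities, lifted through the monoidal $i$---is what powers the bijection.
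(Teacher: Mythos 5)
Your proposal is correct and takes the same approach as the paper: the paper's proof consists precisely of the two formulas $\bigl(i(\id_{a^*})\boxtimes m\bigr)\cdot\eta_a$ and $(\epsilon_a\otimes\id_b)\cdot\bigl(i(\id_a)\boxtimes m'\bigr)$, leaving the verification as ``simple to check.'' Your roundtrip computations via the exchange law, the monoidality of $i$, and the zig-zag identities correctly supply exactly that omitted check.
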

\begin{proof}
  Given $m\in M(a,b)$, we can construct an element
  \[
    \big(i(\id_{a^*})\boxtimes m\big)\cdot\eta_a \in M(I,a^*\otimes b).
  \]
  Conversely, given $m'\in M(I,a^*\otimes b)$, we can construct an element
  \[
    (\epsilon_a\otimes\id_b)\cdot\big(i(\id_a)\boxtimes m'\big) \in M(a,b).
  \]
  It is simple to check that this defines a natural bijection.
\end{proof}

With the fibration $\MOb\colon\End(\dCompProf)\onto\CompCat$ from Definition~\ref{def:ptd}, we can
define the functors
\begin{equation} \begin{tikzcd}[column sep=-1em]
    \label{eqn:functors_1C_CC}
  \CPsh(\dCompProf) \ar[rr,shift left,"F"] \ar[dr,two heads,"\MOb"']
    && \End(\dCompProf) \ar[ll,shift left,"U"] \ar[dl,two heads,"\MOb"] \\
  & \CompCat &
\end{tikzcd} \end{equation}
where $FM\colon\op{\cat{C}}\times \cat{C}\to\Set$ is defined by $FM(a,b)\coloneqq M(a^*\otimes b)$
while $UN\colon \cat{C}\to\Set$ is given by $UN(a)\coloneqq N(I,a)$. It is simple to check that
$F$ and $U$ are morphisms of fibrations, i.e.\ that they preserve cartesian morphisms.

\begin{proposition}
    \label{Prop:canonical unit}
  The functor $F\colon\CPsh(\dCompProf)\to\End(\dCompProf)$ factors through $\Ptd(\dCompProf)$.
\end{proposition}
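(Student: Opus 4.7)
The plan is to construct an explicit canonical unit $i_{FM}\colon\unit(\cat{C})\Rightarrow FM$ and verify it is a well-defined morphism in $\Ptd(\dCompProf)$, functorial in $M$. Given $M\in\CPsh(\dCompProf)$ with base compact category $\cat{C}$, viewed as a lax symmetric monoidal functor $M\colon\cat{C}\to\Set$ with unit element $I_M\in M(I)$, set
\[ i_{FM}(f)\coloneqq\bigl((a^*\otimes f)\circ\eta_a\bigr)\cdot I_M\in M(a^*\otimes b) = FM(a,b) \]
for each $f\colon a\to b$ in $\cat{C}$, where $\cdot$ denotes the right action of $\cat{C}$ on $M$ and $\eta_a\colon I\to a^*\otimes a$ comes from the compact structure.

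I would then check three things. First, $i_{FM}$ is a natural transformation of ordinary profunctors $\Hom_{\cat{C}}\to FM$: the equation $i_{FM}(k\circ f\circ h)=(h^*\otimes k)\cdot i_{FM}(f)$ for $h\colon a'\to a$ and $k\colon b\to b'$ reduces, after using functoriality of the $\cat{C}$-action on $M$, to the identity
\[ (h^*\otimes\id_a)\circ\eta_a = (\id_{(a')^*}\otimes h)\circ\eta_{a'} \]
in $\cat{C}$, a standard consequence of the triangle identities and the definition of $h^*$ recalled in Section~\ref{sec:monoidal,compact,traced}. Second, $i_{FM}$ is a morphism of monoidal profunctors: the equation $i_{FM}(\id_I)=I_{FM}$ is immediate from $\eta_I\cong\id_I$ under the canonical identification $I^*\cong I$, while $i_{FM}(f\otimes g) = i_{FM}(f)\boxtimes i_{FM}(g)$ follows from the coherence relating $\eta_{a\otimes c}$ to $\eta_a$ and $\eta_c$ (up to the braiding needed to move $a$ past $c^*$), combined with the lax monoidality exchange law $(g_1\cdot m_1)\boxtimes(g_2\cdot m_2)=(g_1\otimes g_2)\cdot(m_1\boxtimes m_2)$ of $M$.

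Third, functoriality in $M$: any morphism $(f,\phi)\colon(\cat{C},M)\to(\cat{C}',M')$ in $\CPsh(\dCompProf)$ has $\phi\colon M\to M'$ monoidal (so $\phi(I_M)=I_{M'}$) and equivariant with respect to $f$, whence applying $\phi$ term-by-term to the explicit formula immediately shows that $F(f,\phi)$ intertwines $i_{FM}$ and $i_{FM'}$. This is exactly the condition that the induced morphism in $\End(\dCompProf)$ lifts to $\Ptd(\dCompProf)$, giving the desired factorization.

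The main obstacle will be the compatibility with $\boxtimes$ in the second step. Expanding $i_{FM}(f\otimes g)\in M((a\otimes c)^*\otimes(b\otimes d))$ and $i_{FM}(f)\boxtimes i_{FM}(g)$ (which lives in the same set after transport along the canonical iso $(a^*\otimes b)\otimes(c^*\otimes d)\cong(a\otimes c)^*\otimes(b\otimes d)$) yields two a priori different elements; equating them is a routine but intricate coherence diagram chase in the symmetric compact category $\cat{C}$, together with a careful application of the lax monoidality of $M$ to identify the transported $I_M\boxtimes I_M$ with $I_M$.
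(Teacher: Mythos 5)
Your proposal is correct and matches the paper's proof: the paper defines the unit by exactly the same formula, $i(f)\coloneqq\bigl((\id_{c^*}\otimes f)\circ\eta_c\bigr)\cdot I_M$, and then asserts that functoriality is "easy to check." The only difference is that you spell out the naturality, monoidality, and functoriality-in-$M$ verifications (including the sliding identity $(h^*\otimes\id)\circ\eta_a=(\id\otimes h)\circ\eta_{a'}$ and the coherence for $\boxtimes$) that the paper leaves implicit.
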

\begin{proof}
  Let $M\colon \cat{C}\to\Set$ be an object in $\CPsh(\dCompProf)$. Since $M$ is a monoidal
  profunctor, there is a given unit element $I_M\in M(I)$. Thus given any $f\colon c\to d$ in $
  \cat{C}$, we can define the element $i(f)\in FM(c,d)=M(c^*\otimes d)$ via
  \[
    i(f)\coloneqq\big((\id_{c^*}\otimes f)\circ\eta_c\big)\cdot I_M.
  \]
  It is easy to check that this construction of a unit $i$ is functorial.
\end{proof}

Thus, we have induced functors $F,U\colon\CPsh(\dCompProf)\leftrightarrows\Ptd(\dCompProf)$ giving
the diagram
\begin{equation} \begin{tikzcd}[row sep=tiny]
    \label{eqn:ptd_functors_1C_CC}
  & \Ptd(\dCompProf) \ar[dd] \ar[dl,shift left,"U"] \\
  \CPsh(\dCompProf) \ar[ur,shift left,"F"] \ar[dr,shift left,"F"] & \\
  & \End(\dCompProf) \ar[ul,shift left,"U"]
\end{tikzcd} \end{equation}
in which the triangle involving the $F$'s and the triangle involving the $U$'s both commute.

\begin{proposition}
    \label{Prop:ptd_prof_equivalence}
  The functors $F$ and $U$ from (\ref{eqn:ptd_functors_1C_CC}) form an equivalence of fibrations
  \[ \begin{tikzcd}[column sep=-.5em]
    \CPsh(\dCompProf) \ar[rr,"\equiv"] \ar[dr,two heads,"\MOb"']
      && \Ptd(\dCompProf) \ar[dl,two heads,"\MOb"] \\
    & \CompCat &
  \end{tikzcd} \]
\end{proposition}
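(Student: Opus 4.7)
The plan is to exhibit $F$ and $U$ as quasi-inverse morphisms of fibrations by constructing fiberwise natural isomorphisms $\alpha\colon UF\Rightarrow\id$ on $\CPsh(\dCompProf)$ and $\beta\colon FU\Rightarrow\id$ on $\Ptd(\dCompProf)$, and then checking they are themselves natural in base change along $\CompCat$. Because $F$ and $U$ are already morphisms of fibrations (they preserve cartesian 2-cells, by direct inspection of their defining formulas), this suffices to produce an equivalence of fibrations.

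For $\alpha$, in the fiber over $\cat{C}$ one computes $UF(M)(a)=FM(I,a)=M(I^*\otimes a)$, and applies $M$ to the canonical compact-category isomorphism $I^*\otimes a\iso a$ arising from the triangle identities and the fact that $I$ is self-dual. The verification that this is a monoidal natural transformation $UF(M)\to M$ is routine, using only the coherence of the compact structure together with the lax monoidality of $M$.

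For $\beta$, Lemma~\ref{Lem:comp_prof_bijection} has already produced the required fiberwise bijection
\[
  \beta_{a,b}\colon FU(N)(a,b)=N(I,a^*\otimes b)\To{\iso}N(a,b),\qquad m\mapsto(\epsilon_a\otimes\id_b)\cdot\bigl(i(\id_a)\boxtimes m\bigr),
\]
along with its naturality in $(a,b)$. The remaining, and principal, obstacle is the bookkeeping of three compatibilities: $(i)$ that $\beta$ respects $\boxtimes$, so that it is a morphism in $\dCompProf$ rather than merely in $\dProf$; $(ii)$ that $\beta$ carries the canonical unit on $FU(N)$ from Proposition~\ref{Prop:canonical unit} to the given unit $i$ on $N$, making $\beta$ a morphism in $\Ptd(\dCompProf)$; and $(iii)$ that both $\alpha$ and $\beta$ are natural in base change along strict monoidal functors in $\CompCat$.

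Items $(i)$ and $(ii)$ are direct diagram chases using the bimodule action axioms of Remark~\ref{rmk:profunctor_as_bimodule}, the interchange equation~\eqref{eq:MnProf_monoid_exchange}, the unit axioms~\eqref{eq:MnProf_monoid_unit}, and the triangle identities for the duality $(\eta_a,\epsilon_a)$. Item $(iii)$ is essentially automatic: every ingredient used in the definitions of $F$, $U$, the isomorphism $I^*\otimes a\iso a$, and the pairing $i(\id_{a^*})\boxtimes-$ is built from data strictly preserved by strict monoidal functors of compact categories. Of the three, I expect $(i)$ to be the most intricate calculation, since to align $\beta_{a_1\otimes a_2,b_1\otimes b_2}^{-1}(m_1\boxtimes m_2)$ with $\beta_{a_1,b_1}^{-1}(m_1)\boxtimes\beta_{a_2,b_2}^{-1}(m_2)$ one must commute the insertion of $i(\id_{(a_1\otimes a_2)^*})$ through the symmetry isomorphism relating $(a_1\otimes a_2)^*$ and $a_2^*\otimes a_1^*$ before invoking the interchange law.
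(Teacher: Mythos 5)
Your proposal follows essentially the same route as the paper's proof: the paper likewise identifies $UF(M)(a)=M(I^*\otimes a)\iso M(a)$ and $FU(N)(a,b)=N(I,a^*\otimes b)$ and invokes Lemma~\ref{Lem:comp_prof_bijection} for the latter, leaving the compatibility checks you enumerate as items $(i)$--$(iii)$ implicit. Your write-up is simply a more explicit account of the same argument, and the checks you flag are exactly the ones the paper's terse proof suppresses.
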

\begin{proof}
  If $M\in\CPsh(\dCompProf)$, i.e.\ $M$ is a lax functor $\cat{C}\to\Set$ for some compact
  $\cat{C}$, then $U(FM)(a)=(FM)(I,a)=M(I^*\otimes a)\iso M(a)$ for any $a\in \cat{C}$. On the
  other hand, given $N\in\Ptd(\dCompProf)$, we have $F(UN)(a,b)=N(I,a^*\otimes b)$, and the
  equivalence follows from Lemma~\ref{Lem:comp_prof_bijection}.
\end{proof}

To make the proof of Proposition~\ref{prop:unit_implies_monoid} easier to follow, we make use of an
extension of the standard string diagrams for (compact) monoidal categories to monoidal profunctors,
as well as monoids in $\dMonProf$. We summarize the use of these string diagrams in
Table~\ref{tab:string_diagrams}. We will only use these diagrams in the proof of
Proposition~\ref{prop:unit_implies_monoid}, and there only informally, as an aid to follow the
rigorous equational proofs.

\begin{table}
  \centering
  \setlength{\extrarowheight}{3pt}
  \tikzset{every picture/.style={
    string diagram,
    baseline,
    ampersand replacement=\&,
    execute at end picture={
      \node[fit=(current bounding box),inner xsep=1pt,inner ysep=4pt,fill=none] {};
    }
  }}
  \begin{tabular}{c}
    \toprule
    \begin{tabular}{@{\hspace{6pt}}cccc@{\hspace{6pt}}}
      \multicolumn{2}{c}{Profunctors (Rem.~\ref{rmk:profunctor_as_bimodule})}
      & \multicolumn{2}{c}{Monoids in $\dProf$ (Ex.~\ref{ex:monoid_in_Prof})} \\
      \cmidrule[\lightrulewidth](r){1-2}\cmidrule[\lightrulewidth](l){3-4}
      \begin{tikzpicture}
        \matrix {
          \node [draw, circle] (f) {f}; \&[.2cm]
          \node [draw]         (m) {m}; \\
        };
        \begin{scope}[on background layer]
          \node[fit={(m)}] (back) {};
        \end{scope}
        \draw[ar] ($(f)-(1cm,0)$) to["c'"] (f);
        \draw[ar] (f) to["c"] (m.west);
        \draw[ar] (m) to["d"] +(1cm,0);
        \node [caption] {$m\cdot f$};
      \end{tikzpicture}
      &
      \begin{tikzpicture}
        \matrix {
          \node [draw]        (m) {m}; \&[.2cm]
          \node [draw,circle] (g) {g}; \\
        };
        \begin{scope}[on background layer]
          \node[fit={(m)}] (back) {};
        \end{scope}
        \draw[ar] ($(m)-(1cm,0)$) to["c"] (m);
        \draw[ar] (m) to["d"] (g);
        \draw[ar] (g) to["d'"] +(1cm,0);
        \node [caption] {$g\cdot m$};
      \end{tikzpicture}
      &
      \begin{tikzpicture}
        \matrix {
          \node [draw] (m1) {m_1}; \&
          \node [draw] (m2) {m_2}; \\
        };
        \begin{scope}[on background layer]
          \node[fit={(m1) (m2)}] (back) {};
        \end{scope}
        \draw[ar] ($(m1)-(1cm,0)$) to["c"] (m1);
        \draw[ar] (m1) to["d"] (m2);
        \draw[ar] (m2) to["e"] +(1cm,0);
        \node [caption] {$m_2\bullet m_1$};
      \end{tikzpicture}
      &
      \begin{tikzpicture}
        \matrix {
          \node [draw,circle] (f)  {f};\\
        };
        \begin{scope}[on background layer]
          \node[fit={(f)}] (back) {};
        \end{scope}
        \draw[ar] ($(f)-(1cm,0)$) -- (f);
        \draw[ar] (f) -- +(1cm,0);
        \node [caption] {$i(f)$};
      \end{tikzpicture}
    \end{tabular}
    \\ \addlinespace

    Monoid equations \eqref{eq:Prof_monoid_unit}--\eqref{eq:Prof_monoid_C}
    \\ \cmidrule[\lightrulewidth](lr){1-1}
    \hspace{\fill}
    \begin{tikzpicture}
      \matrix {
        \node [draw,circle] (h)  {h};   \&
        \node [draw]        (m1) {m_1}; \&
        \node [draw]        (m2) {m_2}; \&
        \node [draw,circle] (f)  {f};   \\
      };
      \begin{scope}[on background layer]
        \node[fit={(m1) (m2)}] (back) {};
      \end{scope}
      \draw[ar] ($(h)-(1cm,0)$) -- (h);
      \draw[ar] (h) -- (m1);
      \draw[ar] (m1) -- (m2);
      \draw[ar] (m2) -- (f);
      \draw[ar] (f) -- +(1cm,0);
      \node [caption] {$(f\cdot m_2)\bullet (m_1\cdot h) = f\cdot(m_2\bullet m_1)\cdot h$};
    \end{tikzpicture}
    \hspace{\fill}
    \begin{tikzpicture}
      \matrix {
        \node [draw]        (m1) {m_1}; \&
        \node [draw,circle] (g)  {g};   \&
        \node [draw]        (m3) {m_3}; \\
      };
      \begin{scope}[on background layer]
        \node[fit={(m1) (g) (m3)}] (back) {};
      \end{scope}
      \draw[ar] ($(m1)-(1cm,0)$) -- (m1);
      \draw[ar] (m1) -- (g);
      \draw[ar] (g) -- (m3);
      \draw[ar] (m3) -- +(1cm,0);
      \node [caption] {$(m_3\cdot g)\bullet m_1 = m_3\bullet(g\cdot m_1)$};
    \end{tikzpicture}
    \hspace{\fill}
    \\
    \begin{tikzpicture}
      \matrix {
        \node [draw, circle] (f) {f}; \&[.2cm]
        \node [draw]         (m) {m}; \\
      };
      \begin{scope}[on background layer]
        \node[fit={(f) (m)}] (back) {};
      \end{scope}
      \draw[ar] ($(f)-(1cm,0)$) to (f);
      \draw[ar] (f) to (m.west);
      \draw[ar] (m) to +(1cm,0);
      \node [right=.8cm of m] (equals) {=};
      \matrix [right=.8cm of equals] {
        \node [draw, circle] (f) {f}; \&[.2cm]
        \node [draw]         (m) {m}; \\
      };
      \begin{scope}[on background layer]
        \node[fit={(m)}] (back) {};
      \end{scope}
      \draw[ar] ($(f)-(1cm,0)$) to (f);
      \draw[ar] (f) to (m.west);
      \draw[ar] (m) to +(1cm,0);
      \node [caption] {$m\bullet i(f) = m\cdot f$};
    \end{tikzpicture}
    \\ \addlinespace

    Monoidal profunctors (Sec.~\ref{sec:monoidal_profunctors})
    \\ \cmidrule[\lightrulewidth](lr){1-1}
    \hspace{\fill}
    \begin{tikzpicture}
      \matrix {
        \node [draw, circle, white!80!black, fill=white!80!black] (I) {};\\
      };
      \begin{scope}[on background layer]
        \node[fit={(I)}] (back) {};
      \end{scope}
      \draw [dashed] ($(I)-(1cm,0)$) -- +(2cm,0);
      \node at ($(back.west)+(-.25cm,.2cm)$) {\scriptsize $I$};
      \node at ($(back.east)+(.25cm,.2cm)$) {\scriptsize $I$};
      \node [caption] {$I_M\in M(I,I)$};
    \end{tikzpicture}
    \hspace{\fill}
    \begin{tikzpicture}
      \matrix {
        \node [draw] (m1) {m_1};\\
        \node [draw] (m2) {m_2};\\
      };
      \begin{scope}[on background layer]
        \node[fit={(m1) (m2)}] (back) {};
      \draw[ar] ($(m1)-(1cm,0)$) to["c_1"pos=.2] (m1);
      \draw[ar] ($(m2)-(1cm,0)$) to["c_2"pos=.2] (m2);
      \draw[ar] (m1) to["d_1"pos=.8] +(1cm,0);
      \draw[ar] (m2) to["d_2"pos=.8] +(1cm,0);
      \end{scope}
      \node [caption] {$m_1\boxtimes m_2$};
    \end{tikzpicture}
    \hspace{\fill}
    \begin{tikzpicture}
      \matrix {
        \node [draw, circle, white!80!black, fill=white!80!black] (I) {};\\
        \node [draw] (m) {m};\\
      };
      \begin{scope}[on background layer]
        \node[fit={(I) (m)}] (back) {};
      \end{scope}
      \draw [dashed] ($(I)-(1cm,0)$) -- +(2cm,0);
      \draw[ar] ($(m)-(1cm,0)$) -- (m);
      \draw[ar] (m) -- +(1cm,0);
      \node [right=.6cm of back] (equals) {=};
      \matrix [right=.6cm of equals] {
        \node [draw] (m)  {m};\\
      };
      \begin{scope}[on background layer]
        \node[fit={(m)}] (back) {};
      \end{scope}
      \draw[ar] ($(m)-(1cm,0)$) -- (m);
      \draw[ar] (m) -- +(1cm,0);
      \node [caption] {$I\boxtimes m =  m$};
    \end{tikzpicture}
    \hspace{\fill}
    \\
    \hspace{\fill}
    \begin{tikzpicture}[string diagram,ampersand replacement=\&]
      \matrix {
        \node [draw, circle] (f1) {f_1};  \&
        \node [draw]         (m1)  {m_1}; \&
        \node [draw, circle] (g1) {g_1};  \\
        \node [draw, circle] (f2) {f_2};  \&
        \node [draw]         (m2)  {m_2}; \&
        \node [draw, circle] (g2) {g_2};  \\
      };
      \begin{scope}[on background layer]
        \node[fit={(m1) (m2)}] (back) {};
        \draw[ar] ($(f1)-(1cm,0)$) -- (f1);
        \draw[ar] ($(f2)-(1cm,0)$) -- (f2);
        \draw[ar] (f1) -- (m1);
        \draw[ar] (f2) -- (m2);
        \draw[ar] (m1) -- (g1);
        \draw[ar] (m2) -- (g2);
        \draw[ar] (g1) -- +(1cm,0);
        \draw[ar] (g2) -- +(1cm,0);
      \end{scope}
      \node [caption] {
        $\begin{gathered}
          (g_1\cdot m_1\cdot f_1)\boxtimes (g_2\cdot m_2\cdot f_2) \\
          = (g_1\otimes g_2)\cdot(m_1\boxtimes m_2)\cdot (f_1\otimes f_2)
        \end{gathered}$
      };
    \end{tikzpicture}
    \hspace{\fill}
    \begin{tikzpicture}[string diagram,ampersand replacement=\&]
      \matrix {
        \node [draw] (m1) {m_1};\\
        \node [draw] (m2) {m_2};\\
      };
      \begin{scope}[on background layer]
        \node[fit={(m1) (m2)}] (back) {};
      \end{scope}
      \draw[ar] ($(m1)-(1cm,0)$) to["c_1"pos=.2] (m1);
      \draw[ar] ($(m2)-(1cm,0)$) to["c_2"'pos=.2] (m2);
      \draw[ar] (m1) to["d_1"pos=.8] +(1cm,0);
      \draw[ar] (m2) to["d_2"'pos=.8] +(1cm,0);
      \node [right=.6cm of back] (equals) {=};
      \matrix [right=.8cm of equals] {
        \node [draw] (m2) {m_2};\\
        \node [draw] (m1) {m_1};\\
      };
      \begin{scope}[on background layer]
        \node[fit={(m1) (m2)}] (back') {};
      \end{scope}
      \draw ($(m2)-(1.2cm,0)$) to["c_1"pos=0] (m1-|back'.west) -- (m1);
      \draw[over] ($(m1)-(1.2cm,0)$) to["c_2"'pos=0] (m2-|back'.west);
      \draw (m2-|back'.west) -- (m2);
      \draw (m1) -- (m1-|back'.east) to["d_1"pos=1] ($(m2)+(1.2cm,0)$);
      \draw (m2) -- (m2-|back'.east);
      \draw[over] (m2-|back'.east) to["d_2"'pos=1] ($(m1)+(1.2cm,0)$);
      \node [caption] {
        $m_1\boxtimes m_2 = \sigma_{d_1,d_2}\cdot(m_2\boxtimes m_1)\cdot\sigma_{c_1,c_2}^{-1}$
      };
    \end{tikzpicture}
    \hspace{\fill}
    \\ \addlinespace

    Monoids in $\dMonProf$ \eqref{eq:MnProf_monoid_unit}--\eqref{eq:MnProf_monoid_exchange}
    \\ \cmidrule[\lightrulewidth](lr){1-1}
    \hspace{\fill}
    \begin{tikzpicture}[string diagram,ampersand replacement=\&]
      \matrix {
        \node [draw, circle, white!80!black, fill=white!80!black] (I) {};\\
      };
      \begin{scope}[on background layer]
        \node[fit={(I)}] (back) {};
      \end{scope}
      \draw [dashed] ($(I)-(1cm,0)$) -- +(2cm,0);
      \node at ($(back.west)+(-.25cm,.2cm)$) {\scriptsize $I$};
      \node at ($(back.east)+(.25cm,.2cm)$) {\scriptsize $I$};
      \node [caption] {$I_M=i(\id_I)$};
    \end{tikzpicture}
    \hspace{\fill}
    \begin{tikzpicture}[string diagram,ampersand replacement=\&]
      \matrix {
        \node [draw,circle] (f1) {f_1};\\
        \node [draw,circle] (f2) {f_2};\\
      };
      \begin{scope}[on background layer]
        \node[fit={(f1) (f2)}] (back) {};
      \end{scope}
      \draw[ar] ($(f1)-(1cm,0)$) to["c_1"pos=.2] (f1);
      \draw[ar] ($(f2)-(1cm,0)$) to["c_2"pos=.2] (f2);
      \draw[ar] (f1) to["d_1"pos=.8] +(1cm,0);
      \draw[ar] (f2) to["d_2"pos=.8] +(1cm,0);
      \node [caption] {$i(f_1\otimes f_2) = i(f_1)\boxtimes i(f_2)$};
    \end{tikzpicture}
    \hspace{\fill}
    \begin{tikzpicture}[string diagram,ampersand replacement=\&]
      \matrix {
        \node [draw] (m1) {m_1};   \&
        \node [draw] (m2) {m_2};   \\
        \node [draw] (m1') {m_1'}; \&
        \node [draw] (m2') {m_2'}; \\
      };
      \begin{scope}[on background layer]
        \node[fit={(m1) (m2) (m1') (m2')}] (back) {};
      \end{scope}
      \draw[ar] ($(m1)-(1cm,0)$) -- (m1);
      \draw[ar] (m1) -- (m2);
      \draw[ar] (m2) -- +(1cm,0);
      \draw[ar] ($(m1')-(1cm,0)$) -- (m1');
      \draw[ar] (m1') -- (m2');
      \draw[ar] (m2') -- +(1cm,0);
      \node [caption] {
        $\begin{gathered}
          (m_2\boxtimes m_2')\bullet (m_1\boxtimes m_1') \\
          = (m_2\bullet m_1)\boxtimes(m_2'\bullet m_1')
        \end{gathered}$
      };
    \end{tikzpicture}
    \hspace{\fill}
    \\ \bottomrule
  \end{tabular}
  \caption{String diagrams for structured profunctors.\label{tab:string_diagrams}}
\end{table}

\begin{proposition}
    \label{prop:unit_implies_monoid}
  The forgetful functor $\Mon(\dCompProf)\to\Ptd(\dCompProf)$ is an equivalence of fibrations over
  $\CompCat$.
\end{proposition}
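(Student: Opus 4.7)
The plan is to construct an inverse to $U$ over $\CompCat$: given a pointed monoidal endo-proarrow $(M,i)\colon \cat{C}\tickar \cat{C}$ in $\dCompProf$, I will equip $M$ with a canonical multiplication $\bullet\colon M\odot M\to M$, verify that $(M,i,\bullet)$ is a monoid, and verify that $\bullet$ is uniquely determined by $(M,i)$. The key tool is Lemma~\ref{Lem:comp_prof_bijection}: because $M$ is a pointed monoidal endo-profunctor on a compact category, each element $m\in M(a,b)$ is equivalently encoded by a ``name'' in $M(I,a^*\otimes b)$. Given $m_1\in M(a,b)$ and $m_2\in M(b,c)$, the composite $m_2\bullet m_1$ is then built by pairing the names of $m_1$ and $m_2$ using $\boxtimes$ to obtain an element of $M(I,\, a^*\otimes b\otimes b^*\otimes c)$, acting on the left by the zigzag morphism $\id_{a^*}\otimes\epsilon_b\otimes\id_c$ of $\cat{C}$ to get an element of $M(I,\, a^*\otimes c)$, and finally transporting back through the bijection to $M(a,c)$. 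In essence, the construction mimics how composition in a compact category is recovered from the tensor together with the counit.

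To verify the monoid axioms, recall from Remarks~\ref{rem:suffices_for_monoid} and~\ref{rem:suffices_for_monoidal_monoid} that it suffices to check the unit laws $m\bullet i(f) = m\cdot f$ and $i(g)\bullet m = g\cdot m$, associativity of $\bullet$, and the monoidal compatibilities \eqref{eq:MnProf_monoid_unit} and \eqref{eq:MnProf_monoid_exchange}. The unit laws reduce to the snake identities for $\eta,\epsilon$ in $\cat{C}$ together with the interaction of $i$ with the actions, \eqref{eq:Prof_monoid_unit}. Equations \eqref{eq:MnProf_monoid_unit} and \eqref{eq:MnProf_monoid_exchange} follow from the functoriality of $\otimes$ in $\cat{C}$ and the monoidal structure of $M$. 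For uniqueness, any putative multiplication $\bullet'$ satisfying the monoid axioms can, via the unit law and the exchange law \eqref{eq:MnProf_monoid_exchange}, be rewritten on $(m_1,m_2)$ so that all instances of $\bullet'$ are absorbed into applications of $i$ and the $\cat{C}$-actions; what remains is precisely the formula above, forcing $\bullet' = \bullet$.

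The main obstacle will be verifying associativity: both $(m_3\bullet m_2)\bullet m_1$ and $m_3\bullet(m_2\bullet m_1)$ unfold into expressions involving two separate zigzag bends along the intermediate objects. Sliding these past one another using naturality of $\boxtimes$, the exchange law \eqref{eq:MnProf_monoid_exchange}, and the snake identities in $\cat{C}$, both expressions reduce to the same triple-$\boxtimes$ product $m_1\boxtimes m_2\boxtimes m_3$ acted upon by a compound zigzag morphism built from two $\epsilon$'s, which is manifestly independent of bracketing. This calculation is most transparent in the string-diagram calculus of Table~\ref{tab:string_diagrams}. Once associativity is established, the assignment $(M,i)\mapsto(M,i,\bullet)$ is evidently natural in 2-cells of $\Ptd(\dCompProf)$ and preserves cartesian arrows (since restriction commutes with each ingredient $\boxtimes$, $\epsilon$, $i$, and the $\cat{C}$-action used in the construction), yielding the desired inverse equivalence of fibrations over $\CompCat$.
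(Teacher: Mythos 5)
Your proposal is correct and follows essentially the same route as the paper: you construct the canonical multiplication from the point $i$ using the compact structure (your definition via the ``name'' bijection of Lemma~\ref{Lem:comp_prof_bijection} unwinds, after a snake identity, to the paper's explicit formula $n_2\bullet n_1=(\epsilon_d\otimes\id_e)\cdot(n_1\boxtimes i(\id_{d^*})\boxtimes n_2)\cdot(\id_c\otimes\eta_d)$), verify the monoid axioms via Remarks~\ref{rem:suffices_for_monoid} and~\ref{rem:suffices_for_monoidal_monoid}, and establish uniqueness by absorbing any given multiplication into the unit and exchange laws, exactly as in the paper's verification that $\star=\bullet$.
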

\begin{proof}
  It is clear that this forgetful functor, which we refer to as $U$ in the proof, is a morphism of
  fibrations, so we must show that $U$ is an equivalence of categories.

  To define an inverse functor $U^{-1}$, consider an object of $\Ptd(\dCompProf)$, i.e.\ a
  profunctor $N\colon \cat{C}\tickar \cat{C}$ with basepoint $i\colon\Hom_\cat{C}\to N$. We can
  define a multiplication on $N$ by the formula
  \[
    n_2\bullet n_1 \coloneqq (\epsilon_d\otimes\id_e)\cdot(n_1\boxtimes
    i(\id_{d^*})\boxtimes n_2)\cdot(\id_c\otimes \eta_d)
  \]
  for any $n_1\in N(c,d)$ and $n_2\in N(d,e)$, or in picture form:
  \begin{equation*}
    \begin{tikzpicture}[string diagram]
      \matrix {
        \node [draw]        (n1)  {n_1}; \\
        \node [draw]        (n2)  {n_2}; \\
      };
      \begin{scope}[on background layer]
        \node[fit={(n1) (n2)}] (back) {};
      \end{scope}
      \draw[ar] ($(n1)-(1.2cm,0)$) to["c"pos=.2] (n1);
      \draw[ar] (n1) -- (n1-|back.east) to[in=0,looseness=2,"d"] (back.east);
      \draw (back.east) -- (back.west);
      \draw[ar] (back.west) to[out=180,looseness=2,"d"'] (n2-|back.west) -- (n2);
      \draw[ar] (n2) to["e"pos=.8] +(1.2cm,0);
    \end{tikzpicture}
  \end{equation*}

  It is straightforward to check that this multiplication is associative.
  Remark~\ref{rem:suffices_for_monoidal_monoid} says that, in order to show that $N$ together with
  $i$ and $\bullet$ define an object in $\Mon(\dCompProf)$, we must additionally show that this
  multiplication satisfies the equations \eqref{eq:Prof_monoid_C} and
  \eqref{eq:MnProf_monoid_exchange}. We will begin by showing that $n\bullet i(f)=n\cdot f$ for any
  $n\in N(d,e)$ and $f\colon c\to d$:
  \begin{equation*}
    \begin{tikzpicture}[string diagram]
      \matrix {
        \node [draw,circle] (f) {f}; \\
        \node [draw]        (n) {n}; \\
      };
      \begin{scope}[on background layer]
        \node[fit={(f) (n)}] (back) {};
      \end{scope}
      \draw[ar] ($(f)-(1cm,0)$) -- (f);
      \draw[ar] (f) -- (f-|back.east) to[in=0,looseness=2] (back.east);
      \draw (back.east) -- (back.west);
      \draw[ar] (back.west) to[out=180,looseness=2] (n-|back.west) -- (n);
      \draw[ar] (n) to +(1cm,0);
    \end{tikzpicture}
    \quad = \quad
    \begin{tikzpicture}[string diagram]
      \matrix {
        \node [draw,circle] (f) {f}; &
        \node [fill=none] (phantom a) {}; \\
        \node (phantom b) {}; &
        \node [draw]        (n) {n}; \\
      };
      \begin{scope}[on background layer]
        \node[fit={(phantom a) (n)}] (back) {};
      \end{scope}
      \draw[ar] ($(f)-(1cm,0)$) -- (f);
      \draw (f.east) to[in=0,looseness=2] ($(f.east)!.5!(phantom b.east)$)
        -- ($(f.west)!.5!(phantom b.west)$) to[out=180,looseness=2] (phantom b.west);
      \draw[ar] (phantom b.west) -- (n.west);
      \draw[ar] (n) to +(1cm,0);
    \end{tikzpicture}
    \quad = \quad
    \begin{tikzpicture}[string diagram]
      \matrix {
        \node [draw,circle] (f) {f}; &
        \node [draw]        (n) {n}; \\
      };
      \begin{scope}[on background layer]
        \node[fit={(n)}] {};
      \end{scope}
      \draw[ar] ($(f)-(1cm,0)$) -- (f);
      \draw[ar] (f) -- (n);
      \draw[ar] (n) to +(1cm,0);
    \end{tikzpicture}
  \end{equation*}
  \begin{align*}
    n\bullet i(f)
    &= (\epsilon_d\otimes\id_e) \cdot \bigl(i(f)\boxtimes i(\id_{d^*})\boxtimes n\bigr)
          \cdot (\id_c\otimes \eta_d) \\
    &= (\epsilon_d\otimes\id_e) \cdot \bigl(i(f\otimes\id_{d^*})\boxtimes n\bigr)
          \cdot (\id_c\otimes \eta_d) \\
    &= \bigl((\epsilon_d\cdot i(f\otimes \id_{d^*}))\boxtimes (\id_{e}\cdot n)\bigr)
          \cdot (\id_c\otimes \eta_d) \\
    &= \bigl(i(\epsilon_d\circ (f\otimes \id_{d^*}))\boxtimes (n\cdot\id_d)\bigr)
          \cdot (\id_c\otimes \eta_d) \\
    &= \bigl(i(\id_I)\boxtimes n\bigr)
          \cdot \bigl(((\epsilon_d\circ (f\otimes \id_{d^*}))\otimes\id_d)
             \circ(\id_c\otimes \eta_d)\bigr) \\
    &= \bigl(I_N\boxtimes n\bigr)
          \cdot \bigl((\epsilon_d\otimes\id_d)\circ(\id_d\otimes\eta_d)
             \circ(f\otimes\id_I)\bigr) \\
    &= \bigl(I_N\boxtimes n\bigr) \cdot (f\otimes\id_I) \\
    &= n\cdot f.
  \end{align*}
  The equation $i(f)\bullet n=f\cdot n$ follows similarly, so we have verified
  \eqref{eq:Prof_monoid_C}.

  Finally, we must check \eqref{eq:MnProf_monoid_exchange}. Recall that this says
  \[
    (n_2\boxtimes n'_2)\bullet(n_1\boxtimes n'_1)=(n_2\bullet n_1)\boxtimes(n'_2\bullet n'_1)
  \]
  for any $n_1\in N(c,d)$, $n'_1\in N(c',d')$, $n_2\in N(d,e)$, and $n'_2\in N(d',e')$, which we
  prove below:
  \begin{equation*}
    \begin{tikzpicture}[string diagram]
      \matrix {
        \node [draw] (n1)  {n_1};  \\
        \node [draw] (n1') {n'_1}; \\[.2cm]
        \node [draw] (n2)  {n_2};  \\
        \node [draw] (n2') {n'_2}; \\
      };
      \begin{scope}[on background layer]
        \node[fit={(n1) (n2')}] (back) {};
      \end{scope}
      \begin{scope}[xshift=1cm]
      \coordinate (ld') at ($(n1'.south west-|back.west)!.333!(n2.north west-|back.west)$);
      \coordinate (ld) at ($(n1'.south west-|back.west)!.667!(n2.north west-|back.west)$);
      \coordinate (rd') at ($(n1'.south east-|back.east)!.333!(n2.north east-|back.east)$);
      \coordinate (rd) at ($(n1'.south east-|back.east)!.667!(n2.north east-|back.east)$);
      \end{scope}
      \draw[ar] ($(n1)-(1cm,0)$)  to["c"pos=.2]  (n1);
      \draw[ar] ($(n1')-(1cm,0)$) to["c'"pos=.2] (n1');
      \draw[ar] (n1)  -- (n1-|back.east)  to[in=0,looseness=1.1,"d"pos=.1] (rd);
      \draw[ar] (n1') -- (n1'-|back.east) to[in=0,looseness=2,"d'"pos=.05] (rd');
      \draw (rd') -- (ld');
      \draw (rd) -- (ld);
      \draw[ar] (ld') to[out=180,looseness=1.1,"d'"'pos=.9] (n2'-|back.west) -- (n2');
      \draw[ar] (ld) to[out=180,looseness=2,"d"'pos=.95]  (n2-|back.west)  -- (n2);
      \draw[ar] (n2)  to["e"pos=.8]  +(1cm,0);
      \draw[ar] (n2') to["e'"pos=.8] +(1cm,0);
    \end{tikzpicture}
    \quad = \quad
    \begin{tikzpicture}[string diagram]
      \matrix {
        \node [draw] (n1)  {n_1};  \\
        \node [draw] (n2)  {n_2};  \\[.2cm]
        \node [draw] (n1') {n'_1}; \\
        \node [draw] (n2') {n'_2}; \\
      };
      \begin{scope}[on background layer]
        \node[fit={(n1) (n2')}] (back) {};
      \end{scope}
      \coordinate (ld') at ($(n1'.south west-|back.west)!.5!(n2'.north west-|back.west)$);
      \coordinate (ld) at ($(n1.south west-|back.west)!.5!(n2.north west-|back.west)$);
      \coordinate (rd') at ($(n1'.south east-|back.east)!.5!(n2'.north east-|back.east)$);
      \coordinate (rd) at ($(n1.south east-|back.east)!.5!(n2.north east-|back.east)$);
      \coordinate (ll1) at ($(n1.west)-(1cm,0)$);
      \coordinate (ll1') at (ll1|-n2);
      \coordinate (lld') at ($(n2.south-|ll1)!.333!(n1'.north-|ll1)$);
      \coordinate (lld) at ($(n2.south-|ll1)!.667!(n1'.north-|ll1)$);
      \coordinate (ll2) at (ll1|-n1');
      \coordinate (ll2') at (ll1|-n2');
      \coordinate (rr1) at ($(n1.east)+(1cm,0)$);
      \coordinate (rr1') at (rr1|-n2);
      \coordinate (rrd') at ($(n2.south-|rr1)!.333!(n1'.north-|rr1)$);
      \coordinate (rrd) at ($(n2.south-|rr1)!.667!(n1'.north-|rr1)$);
      \coordinate (rr2) at (rr1|-n1');
      \coordinate (rr2') at (rr1|-n2');
      \draw[ar] ($(ll1)-(.5cm,0)$)  to["c"pos=.2]  (n1);
      \draw[ar=.2] ($(ll1')-(.5cm,0)$) to["c'"pos=.2] (ll1')
        to[looseness=.8] (n1'-|back.west) -- (n1');
      \draw[ar=.45] (n1') -- (n1'-|back.east)
        to[in=200,looseness=.8] (rr1')
        to[in=60,out=20,looseness=1.6,"d'"pos=.05] (rrd')
        to[out=240,in=0,looseness=.8] (rd');
      \draw (n1) -- (rr1);
      \draw[ar=.3,over] (rr1) to[in=-25,looseness=1.1,"d"pos=.1] (rrd)
        to[out=155,in=0,looseness=.8] (rd);
      \draw (rd') -- (ld');
      \draw (rd) -- (ld);
      \draw[ar=.6] (ld') to[out=180,in=-25,looseness=.8] (lld')
        to[out=155,looseness=1.1,"d'"'pos=.9] (ll2')
        to[looseness=.8] (n2');
      \draw[ar=.6,over] (ld) to[out=180,in=60,looseness=.8] (lld)
        to[out=240,in=200,looseness=1.6,"d"'pos=.95] (ll2)
        to[out=20,looseness=.8] (n2-|back.west);
      \draw (n2-|back.west) -- (n2);
      \draw (n2) -- (n2-|back.east);
      \draw[ar=.8,over] (n2-|back.east) to[looseness=.8] (rr2) to["e"] +(.5cm,0);
      \draw[ar] (n2') to["e'"pos=.8] ($(rr2')+(.5cm,0)$);
    \end{tikzpicture}
    \quad = \quad
    \begin{tikzpicture}[string diagram]
      \matrix {
        \node [draw] (n1)  {n_1};  \\
        \node [draw] (n2)  {n_2};  \\[.2cm]
        \node [draw] (n1') {n'_1}; \\
        \node [draw] (n2') {n'_2}; \\
      };
      \begin{scope}[on background layer]
        \node[fit={(n1) (n2')}] (back) {};
      \end{scope}
      \coordinate (ld') at ($(n1'.south west-|back.west)!.5!(n2'.north west-|back.west)$);
      \coordinate (ld) at ($(n1.south west-|back.west)!.5!(n2.north west-|back.west)$);
      \coordinate (rd') at ($(n1'.south east-|back.east)!.5!(n2'.north east-|back.east)$);
      \coordinate (rd) at ($(n1.south east-|back.east)!.5!(n2.north east-|back.east)$);
      \draw[ar] ($(n1)-(1cm,0)$)  to["c"pos=.2]  (n1);
      \draw[ar] ($(n1')-(1cm,0)$) to["c'"pos=.2] (n1');
      \draw[ar] (n1)  -- (n1-|back.east)  to[in=0,looseness=2,"d"pos=.1] (rd);
      \draw[ar] (n1') -- (n1'-|back.east) to[in=0,looseness=2,"d'"pos=.05] (rd');
      \draw (rd') -- (ld');
      \draw (rd) -- (ld);
      \draw[ar] (ld') to[out=180,looseness=2,"d'"'pos=.9] (n2'-|back.west) -- (n2');
      \draw[ar] (ld) to[out=180,looseness=2,"d"'pos=.95]  (n2-|back.west)  -- (n2);
      \draw[ar] (n2)  to["e"pos=.8]  +(1cm,0);
      \draw[ar] (n2') to["e'"pos=.8] +(1cm,0);
    \end{tikzpicture}
  \end{equation*}
  \begin{align*}
    &(n_2\boxtimes n'_2)\bullet(n_1\boxtimes n'_1) \\
    &= (\epsilon_{d\otimes d'}\otimes\id_{e\otimes e'})
       \cdot \left((n_1\boxtimes n'_1)\boxtimes i(\id_{d^*\otimes d'^{*}})
          \boxtimes(n_2\boxtimes n'_2)\right)
       \cdot (\id_{c\otimes c'}\otimes\eta_{d\otimes d'}) \\
    &= \bigl((\epsilon_d\otimes\id_{e\otimes I\otimes e'})
        \circ(\id_d\otimes\sigma_{I,d^*\otimes e}\otimes\id_{e'})\bigr) \\
    &\qquad \cdot \Bigl[n_1\boxtimes\bigl(\epsilon_{d'}\cdot(n'_1\boxtimes i(\id_{d'^{*}}))\bigr)
        \boxtimes \bigl((i(\id_{d*})\boxtimes n_2)\cdot\eta_d\bigr)\boxtimes n'_2\Bigr] \\
    &\qquad \cdot \bigl((\id_c\otimes\sigma_{I,c'\otimes d'^*}\otimes\id_{d'})
        \circ (\id_{c\otimes I\otimes c'}\otimes\eta_{d'}) \bigr) \\
    &= (\epsilon_d\otimes\id_{e\otimes I\otimes e'})
        \cdot \Bigl[n_1\boxtimes\bigl((i(\id_{d*})\boxtimes n_2)\cdot\eta_d\bigr)
        \boxtimes \bigl(\epsilon_{d'}\cdot(n'_1\boxtimes i(\id_{d'^{*}}))\bigr)
        \boxtimes n'_2\Bigr] \\
    &\qquad \cdot (\id_{c\otimes I\otimes c'}\otimes\eta_{d'}) \\
    &= (\epsilon_d\otimes\id_e\otimes\epsilon_{d'}\otimes\id_{e'})
        \cdot \Bigl[\bigl(n_1\boxtimes i(\id_{d^*})\boxtimes n_2\bigr)
        \boxtimes \bigl(n'_1\boxtimes i(\id_{d'^*})\boxtimes n'_2\bigr)\Bigr] \\
    &\qquad \cdot(\id_c\otimes\eta_d\otimes\id_{c'}\otimes\eta_{d'}) \\
    &= \Bigl[(\epsilon_d\otimes\id_e)
        \cdot \bigl(n_1\boxtimes i(\id_{d^*})\boxtimes n_2\bigr)
        \cdot (\id_c\otimes\eta_d)\Bigr] \\
    &\qquad \boxtimes \Bigl[(\epsilon_{d'}\otimes\id_{e'})
        \cdot \bigl(n'_1\boxtimes i(\id_{d'^*})\boxtimes n'_2\bigr)
        \cdot (\id_{c'}\otimes\eta_{d'})\Bigr].
  \end{align*}
  Thus we have shown that the multiplication $\bullet$ defines a monoid $U^{-1}(N)$.

  To define $U^{-1}$ on morphisms, suppose that $M\in\MonProf(\cat{C}, \cat{C})$ is another monoidal
  profunctor with unit, and that $\phi\colon M\to N$ is a monoidal profunctor morphism which
  preserves units. Then $\phi$ also preserves the canonical multiplications:
  \begin{align*}
    \phi(n_2\bullet n_1) &=
    \phi\bigl[(\epsilon_d\otimes\id_e)\cdot(n_1\boxtimes i(\id_{d^*})\boxtimes n_2)
       \cdot(\id_c\otimes \eta_d)\bigr] \\
    &= (\epsilon_d\otimes\id_e)\cdot\phi\bigl(n_1\boxtimes i_M(\id_{d^*})\boxtimes n_2\bigr)
       \cdot(\id_c\otimes \eta_d) \\
    &= (\epsilon_d\otimes\id_e)
       \cdot\bigl(\phi(n_1)\boxtimes \phi(i_M(\id_{d^*}))\boxtimes \phi(n_2)\bigr)
       \cdot(\id_c\otimes \eta_d) \\
    &= (\epsilon_d\otimes\id_e)
       \cdot(\phi(n_1)\boxtimes i_N(\id_{d^*}))\boxtimes \phi(n_2))
       \cdot(\id_c\otimes \eta_d) \\
    &= \phi(n_2)\bullet\phi(n_1)
  \end{align*}

  Clearly $U\circ U^{-1}=\id_{\Ptd(\dCompProf)}$. For the other direction, consider a monoid
  $M\colon \cat{C}\tickar \cat{C}$ with unit $i$ and multiplication $\star$. Then the multiplication
  $\bullet$ defined above in fact coincides with $\star$:
  \begin{align*}
    \begin{tikzpicture}[string diagram,ampersand replacement=\&]
      \matrix {
        \node [draw] (n1) {n1}; \&
        \node [draw] (n2) {n2}; \\
      };
      \begin{scope}[on background layer]
        \node[fit={(n1) (n2)}] {};
      \end{scope}
      \draw[ar] ($(n1)-(1cm,0)$) to["c"pos=.2] (n1);
      \draw[ar] (n1) to["d"] (n2);
      \draw[ar] (n2) to["e"pos=.8] +(1cm,0);
    \end{tikzpicture}
    \quad &= \quad
    \begin{tikzpicture}[string diagram,looseness=1.5,ampersand replacement=\&]
      \matrix {
        \node [draw] (n1) {n1}; \&[.5cm]
        \node [draw] (n2) {n2}; \\
      };
      \coordinate (mid) at ($(n1.east)!.5!(n2.west)$);
      \coordinate (top) at ($(mid)+(0,.3cm)$);
      \coordinate (bot) at ($(mid)-(0,.3cm)$);
      \begin{scope}[on background layer]
        \node[fit={(n1) (n2) (top) (bot)}] {};
      \end{scope}
      \draw[ar] ($(n1)-(1cm,0)$) to["c"pos=.2] (n1);
      \draw[ar] (n1) to (top);
      \draw (top) to[in=0,"d"{pos=.45,inner sep=.5pt}] (mid) to[out=180] (bot);
      \draw[ar] (bot) to (n2);
      \draw[ar] (n2) to["e"pos=.8] +(1cm,0);
    \end{tikzpicture}
    \\[5pt] &= \quad
    \begin{tikzpicture}[string diagram,looseness=1.5,ampersand replacement=\&]
      \matrix {
        \node [draw]        (n1)  {n_1}; \&[-.4cm]
        \coordinate[left=.1cm] (counit); \\
        \coordinate[right=.1cm] (unit); \&
        \node [draw]        (n2)  {n_2}; \\
      };
      \coordinate (mid) at ($(n1)!.5!(n2)$);
      \begin{scope}[on background layer]
        \node[fit={(n1) (n2)}] (back) {};
      \end{scope}
      \draw[ar] ($(n1)-(1.2cm,0)$) to["c"pos=.2] (n1);
      \draw[ar=.7] (n1) to (counit);
      \draw (counit) to[in=0,"d"pos=.1] (counit|-mid) -- (unit|-mid) to[out=180] (unit);
      \draw[ar] (unit) -- (n2);
      \draw[ar] (n2) to["e"pos=.8] +(1.2cm,0);
    \end{tikzpicture}
    \quad = \quad
    \begin{tikzpicture}[string diagram,ampersand replacement=\&]
      \matrix {
        \node [draw]        (n1)  {n_1}; \\
        \node [draw]        (n2)  {n_2}; \\
      };
      \begin{scope}[on background layer]
        \node[fit={(n1) (n2)}] (back) {};
      \end{scope}
      \draw[ar] ($(n1)-(1.2cm,0)$) to["c"pos=.2] (n1);
      \draw[ar] (n1) -- (n1-|back.east) to[in=0,looseness=2,"d"pos=.4] (back.east);
      \draw (back.east) -- (back.west);
      \draw[ar] (back.west) to[out=180,looseness=2,"d"'pos=.6] (n2-|back.west) -- (n2);
      \draw[ar] (n2) to["e"pos=.8] +(1.2cm,0);
    \end{tikzpicture}
  \end{align*}
  \begin{align*}
    n_2\star n_1
      &= n_2 \star \Bigl[\bigl((\epsilon_d\otimes\id_d)
        \circ(\id_d\otimes\eta_d)\bigr) \cdot n_1 \Bigr] \\
    &= \bigl[n_2\cdot(\epsilon_d\otimes\id_d)\bigr]
        \star \bigl[(\id_d\otimes\eta_d)\cdot n_1\bigr] \\
    &= \bigl(i(\epsilon_d)\boxtimes n_2\bigr) \star \bigl(n_1\boxtimes i(\eta_d)\bigr) \\
    &= (\epsilon_d\otimes\id_e)
        \cdot \Bigl[\bigl(i(\id_d)\boxtimes i(\id_{d^*})\boxtimes n_2\bigr)
        \star \bigl(n_1\boxtimes i(\id_{d^*})\boxtimes i(\id_d)\bigr)\Bigr] \\
    & \qquad \cdot (\id_c\otimes\eta_d) \\
    &= (\epsilon_d\otimes\id_e) \cdot \bigl(n_1\boxtimes i(\id_{d^*})\boxtimes n_2\bigr)
        \cdot (\id_c\otimes \eta_d) \\
    &= n_2\bullet n_1
  \end{align*}
  Thus $U^{-1}\circ U=\id_{\Mon(\dCompProf)}$, and $U$ is an equivalence (in fact, isomorphism) of
  categories.
\end{proof}

\subsection{Deducing \ref{thm:TheoremB}}

We now have all the pieces in place to prove \ref{thm:TheoremB}. Recall that we use the notation
$\int$ to denote the Grothendieck construction.

\begin{named}{Theorem B}
    \label{thm:TheoremB}
  There are equivalences of fibrations
  \begin{equation*}
    \begin{tikzcd}[column sep=-1em]
      \int\limits^{\mathclap{\cat{C}\in\CompCat}} \Lax(\cat{C},\Set)
          \ar[rr,"\equiv"] \ar[dr,two heads]
        && \CompCat^{\bo} \ar[dl,two heads,"\dom" pos=.4] \\
      & \CompCat &
    \end{tikzcd}
    \quad\tn{and}\quad
    \begin{tikzcd}[column sep=-1em]
      \int\limits^{\mathclap{\cat{T}\in\TrCat}} \Lax\big(\Int(\cat{T}),\Set\big)
          \ar[rr,"\equiv"] \ar[dr,two heads]
        && \TrCat^{\bo} \ar[dl,two heads,"\dom" pos=.4] \\
      & \TrCat &
    \end{tikzcd}
  \end{equation*}
\end{named}
\begin{proof}
  Essentially by definition, we have isomorphisms of fibrations
  \begin{equation*}
      \int\limits^{\mathclap{\cat{C}\in\CompCat}}\Lax\big(\cat{C},\Set\big)
          \To{\iso}\CPsh(\dCompProf) 
    \quad\text{and}\quad
      \int\limits^{\mathclap{\cat{T}\in\TrCat}}\Lax\big(\Int(\cat{T}),\Set\big)\To{\iso}\CPsh(\dTrProf) 
  \end{equation*}
  over $\CompCat$ and $\TrCat$, respectively.
  Since $\dCompProf$ and $\dTrProf$ are exact by
  Proposition~\ref{prop:CompProf_exact}~and~\ref{prop:TrProf_exact}, we may apply
  Proposition~\ref{prop:Mon_vs_bo} to get equivalences of fibrations over $\CompCat$ and $\TrCat$:
  \begin{equation*}
      \Mon(\dCompProf) \To{\equiv} \CompCat^{\bo} 
    \quad\tn{and}\quad
      \Mon(\dTrProf) \To{\equiv}  \TrCat^{\bo} 
  \end{equation*}
  It then suffices to prove that there are equivalences of fibrations over $\CompCat$ and $\TrCat$:
  \begin{equation}\label{eqn:mon_prof_equivalence}
    \CPsh(\dCompProf) \To{\equiv}  \Mon(\dCompProf) 
  \quad\tn{and}\quad
    \CPsh(\dTrProf) \To{\equiv}  \Mon(\dTrProf) 
  \end{equation}
the first of which is a direct consequence of Propositions~\ref{Prop:ptd_prof_equivalence} and~\ref{prop:unit_implies_monoid}. 

For the second equivalence, we have that $\Int\colon\dTrProf\to\dCompProf$ is a local equivalence, and it preserves the terminal
  object. Thus using Lemma~\ref{lem:Mon_pullback} and Lemma~\ref{lem:Psh_pullback} we construct the
  desired equivalence $\CPsh(\dTrProf)\to\Mon(\dTrProf)$ as the pullback along $\Int$ of the
  equivalence $\CPsh(\dCompProf)\to\Mon(\dCompProf)$.

\end{proof}

\section{Objectwise-freeness}
  \label{sec:monoids_on_free}

If we momentarily denote the free traced category on a set $\LabSet$ as $F(\LabSet)$, a corollary of
\ref{thm:TheoremB} is an isomorphism
$\Lax(\Int(F(\LabSet)),\Set)\cong\TrCat^{\bo}_{F(\LabSet)/}\cong\TrCat_\LabSet$, where
$\TrCat_{\LabSet}$ is the category (defined in the introduction) of traced categories whose monoid
of objects is free on the set $\LabSet$. This was called \ref{thm:traced_is_cob_alg} in the
introduction. Our goal in the present section is to prove \ref{thm:TheoremA}, which allows the set
$\LabSet$ to vary. In order to handle the cases of traced and compact categories uniformly, we first
formalize what it means for an object in a general exact equipment to itself be objectwise-free.

Consider an exact equipment $\dcat{D}$, and let $\dom\colon\dcat{D}_0^{\bo}\onto\dcat{D}_0$ denote
the domain fibration. Suppose we are given an adjunction to a category $\cat{S}$:
\begin{equation*}
  \Adjoint{\cat{S}}{\dcat{D}_0.}{F}{U}
\end{equation*}
Let $T=UF$ be the monad on $\cat{S}$ corresponding to this adjunction, and write $\cat{S}_T$ for the
Kleisli category for $T$, i.e.\ the full subcategory of free objects $Fs$ in $\dcat{D}_0$. Let
$k_T\colon\cat{S}_T\to\dcat{D}_0$ denote the inclusion, and define $k_T^{\bo}$ to be the strict
pullback of $k_T$ along $\dom$:
\[ \begin{tikzcd}
  (\dcat{D}_T^{\bo})_0 \ar[d] \ar[dr,phantom,"\lrcorner" very near start] \ar[r,"k_T^{\bo}"]
    &\dcat{D}_0^{\bo} \ar[d,"\dom"] \\
  \cat{S}_T \ar[r,"k_T"']
    & \dcat{D}_0
\end{tikzcd} \]

\begin{definition}
    \label{def:kleisli_equipment}
  The fully faithful functors $k_T\colon\cat{S}_T\to\dcat{D}_0$ and
  $k_T^{\bo}\colon(\dcat{D}_T^{\bo})_0\to\dcat{D}_0^{\bo}$ induce equipments $\dcat{D}_T\coloneqq
  k_T^*\dcat{D}$ and $\dcat{D}_T^{\bo}\coloneqq (k_T^{\bo})^*\dcat{D}^{\bo}$ (as in
  Definition~\ref{def:induced_locally_equivalent_equipment}), as well as fully faithful local
  equivalences, which we denote
  \begin{equation}
      \label{eqn:FFLE_DT}
    \varphi_T\colon \dcat{D}_T\to\dcat{D}
    \qquad \text{and} \qquad
    \varphi_T^{\bo}\colon \dcat{D}_T^{\bo}\to\dcat{D}^{\bo}.
  \end{equation}
\end{definition}

We can now apply the previous abstract definition to define equipments of objectwise-free monoidal,
compact, and traced categories, preparing for the proof of \ref{thm:TheoremA}. Consider the
free-forgetful adjunctions%
\footnote{\label{foot:adjunctions}
  These three adjunctions in fact extend to 2-adjunctions; see
  Corollary~\ref{cor:2_adjunctions_MonTrComp}.
}
\begin{equation}
    \label{eqn:adjunctions}
  \Adjoint{\Set}{\MonCat}{\FM}{\UM}
  \qquad
  \Adjoint{\Set}{\CompCat}{\FC}{\UC}
  \qquad
  \Adjoint{\Set}{\TrCat}{\FT}{\UT}
\end{equation}
and write $\TM,\TC,$ and $\TT$ for the corresponding monads on $\Set$. Note that $\TM$ and $\TT$ are
both isomorphic to the free monoid monad, while $\TC$ is isomorphic to the free
monoid-with-involution monad.%
\footnote{
   Note that $\TM$ is not the free-\emph{commutative}-monoid monad, even though the objects of
   $\MonCat$ are \emph{symmetric} monoidal categories, because the symmetries are encoded by natural
   isomorphisms, not equalities.
}
Following Definition~\ref{def:kleisli_equipment}, we have equipments:
\[
  \dFrMonProf  \coloneqq \dMonProf_{\TM}
  \qquad
  \dFrCompProf \coloneqq \dCompProf_{\TC}
  \qquad
  \dFrTrProf   \coloneqq \dTrProf_{\TT}
\]
The functor $\dFrMonProf\to\dMonProf$ is a fully faithful local equivalence, meaning it can be
identified with the full sub-equipment of $\dMonProf$ spanned by the monoidal categories which are
\emph{free on a set}; similarly for $\dFrCompProf$ and $\dFrTrProf$. We will write
\begin{align*}
  \FrMonCat&\coloneqq\dFrMonProf_0=\Set_{\TM}\\
  \FrCompCat&\coloneqq\dFrCompProf_0=\Set_{\TC}\\
  \FrTrCat&\coloneqq\dFrTrProf_0=\Set_{\TT}
\end{align*}
for the vertical 1-categories. Note that each of these categories has a terminal object.

Similarly, we can define
\begin{gather*}
  \MMonFrObCat \coloneqq \VVer\left(\dMonProf_{\TM}^{\bo}\right) \qquad
  \CCompFrObCat \coloneqq \VVer\left(\dCompProf_{\TC}^{\bo}\right) \\
  \TTrFrObCat \coloneqq \VVer\left(\dTrProf_{\TT}^{\bo}\right).
\end{gather*}
The functor $\dMonProf_{\TM}^{\bo}\to\dMonProf^{\bo}$ is also a fully faithful local equivalence.
Hence $\MMonFrObCat$ is the full sub-2-category of $\MMonCat^{\bo}$ spanned by those
bijective-on-objects monoidal functors $\cat{M}'\onto\cat{M}$ for which $\cat{M}'=\FM(\LabSet)$ is
free on a set. Note that this is equivalent to the full sub-2-category of $\MMonCat$ spanned by
those monoidal categories $\cat{M}$ which are \emph{objectwise-free}, i.e.\ whose underlying monoid
of objects is free. Likewise for $\CCompFrObCat$ and $\TTrFrObCat$.

\begin{remark}
    \label{rem:PROP}
  We have defined a 2-category $\MMonFrObCat$ of objectwise-free monoidal categories, which are also
  known as (colored) PROPs (see, e.g.\ \cite{HackneyRobertson} for more on PROPs). However, the
  morphisms between PROPs are more restrictive than those defined above, because they must "send
  colors to colors". To define an equipment of PROPs, consider the functor $\FM\colon\Set\to\MonCat$
  and let $\dPROP\coloneqq\FM^*\dMonProf$ be the induced equipment. Similarly, one can define traced
  and compact (colored) PROPs as $\FT^*\dTrProf$ and $\FC^*\dCompProf$ respectively.

  Although we will not prove it here, one can prove a variant of \ref{thm:TheoremA}, namely that
  there are equivalences of categories
  \begin{equation*}
    \int^{\LabSet\in\Set}(\LCob{\LabSet})\alg \to \CompPROP
    \quad\text{and}\quad
    \int^{\LabSet\in\Set}(\LCob{\LabSet})\alg \to \TrPROP.
  \end{equation*}
  See \cite{JoyalKock} for another approach to compact PROPs.
\end{remark}

The proof of \ref{thm:TheoremA} depends on connecting monoids in $\dFrCompProf$ and $\dFrTrProf$ to
$\CCompFrObCat$ and $\TTrFrObCat$. We first prove this connection in the abstract setting, then
specialize to the cases of interest in Corollary~\ref{cor:TrCat_ObjectFree}.

\begin{proposition}
    \label{prop:objectfree_Mod_bo}
  With the setup as in Definition~\ref{def:kleisli_equipment}, suppose also that $\dcat{D}$ is exact
  and has local reflexive coequalizers. There is a commutative diagram of equipments, in which the vertical
  functors are equivalences and the horizontal functors are local equivalences:
  \[ \begin{tikzcd}[column sep=large]
    \dMod(\dcat{D}_T) \ar[r,"\dMod(\varphi_T)"] \ar[d,"\equiv"']
      & \dMod(\dcat{D}) \ar[d,"\equiv"] \\
    \dcat{D}_T^{\bo} \ar[r,"\varphi_T^{\bo}"'] & \dcat{D}^{\bo}.
  \end{tikzcd} \]

  Suppose moreover that $U(\bo)\subseteq\mathrm{iso}(\cat{S})$, the set of isomorphisms in
  $\cat{S}$. Then the following composite is a fully faithful local equivalence:
  \[ \begin{tikzcd}
    \dMod(\dcat{D}_T)\ar[r,"\dMod(\varphi_T)"]
      &[1.5em] \dMod(\dcat{D}) \ar[r,"\ColDash"] & \dcat{D}.
  \end{tikzcd} \]
\end{proposition}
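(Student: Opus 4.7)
The plan is to build everything on top of the master equivalence $\dMod(\dcat{D}) \simeq \dcat{D}^{\bo}$ supplied by Theorem~\ref{thm:Mod_vs_bo}, and transfer it along $\varphi_T$ using the fact that both rows of the square are pseudo-pullbacks of a common cospan.

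First, I would verify that the horizontal functors are local equivalences. By construction, $\varphi_T$ is a fully faithful local equivalence (Definition~\ref{def:kleisli_equipment}), so $\dMod(\varphi_T)$ is a fully faithful local equivalence by Lemma~\ref{lemma:FFLE_Mod}, and $\varphi_T^{\bo}$ is one by its pullback definition. The right-hand vertical is an equivalence by Theorem~\ref{thm:Mod_vs_bo} (applicable because $\dcat{D}$ is exact with local reflexive coequalizers). Commutativity of the square is straightforward: the equivalence $\dMod(\dcat{D}) \simeq \dcat{D}^{\bo}$ sends a monoid $M$ to its collapse embedding $i_M \colon |M| \twoheadrightarrow \Col{M}$, and this construction is natural in $M$, so it respects the restriction of everything to monoids whose underlying object lies in the image of $k_T$.

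For the left vertical being an equivalence, I would argue via pseudo-pullbacks. By Lemma~\ref{lem:Mon_pullback} applied to $\varphi_T$, the square expressing $\Mon(\dcat{D}_T)$ as living over $\cat{S}_T$ is a pseudo-pullback of $\MOb \colon \Mon(\dcat{D}) \to \dcat{D}_0$ along $k_T$, and this extends to $\dMod(\dcat{D}_T) \to \dMod(\dcat{D})$. On the other side, $\dcat{D}_T^{\bo}$ is by definition the (strict, hence pseudo) pullback of $\dom \colon \dcat{D}^{\bo}_0 \to \dcat{D}_0$ along $k_T$. Since the right vertical equivalence intertwines $\MOb$ with $\dom$ (the underlying object of a monoid is the domain of its collapse), the universal property of pseudo-pullbacks induces the left vertical as an equivalence.

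For the moreover part, the composite $\dMod(\dcat{D}_T) \to \dMod(\dcat{D}) \xrightarrow{\ColDash} \dcat{D}$ is a local equivalence as a composite of one (Lemma~\ref{lemma:FFLE_Mod}) with another (Proposition~\ref{prop:collapse_local_equivalence}). The substantive point — and the main obstacle — is to show that the induced functor on vertical categories, $\Mon(\dcat{D}_T) \to \dcat{D}_0$, is fully faithful. Using the equivalence of the first part, this is equivalent to showing that $\cod \colon \dcat{D}_T^{\bo} \to \dcat{D}_0$ is fully faithful. Given $\bo$ maps $f \colon Fs \twoheadrightarrow c$ and $f' \colon Fs' \twoheadrightarrow c'$, a morphism in $\dcat{D}_T^{\bo}$ between them is a commuting square in $\dcat{D}_0$, and I must show that any $g \colon c \to c'$ extends uniquely to such a square. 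Applying the adjunction $F \dashv U$ to the composite $gf \colon Fs \to c'$ produces $\overline{gf} \colon s \to Uc'$; lifting $g$ to a top edge $Fs \to Fs'$ is equivalent to lifting this morphism along $U(f') \colon UFs' \to Uc'$. The hypothesis $U(\bo) \subseteq \mathrm{iso}(\cat{S})$ says exactly that $U(f')$ is invertible, so the lift exists uniquely, completing the proof.
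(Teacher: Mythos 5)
Your proof is correct and follows essentially the same route as the paper: the first part is obtained by combining Lemma~\ref{lemma:FFLE_Mod}, Theorem~\ref{thm:Mod_vs_bo}, and the pullback definitions of $\dcat{D}_T$ and $\dcat{D}_T^{\bo}$, and the ``moreover'' reduces to showing $\cod\circ\varphi_T^{\bo}$ is fully faithful on vertical categories. Your adjunction argument — that a top edge $Fs\to Fs'$ over $g$ corresponds to a lift along $U(f')$, which is invertible by hypothesis — is exactly the paper's commutative square of hom-sets, just phrased in terms of transposes.
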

\begin{proof}
  By Lemma~\ref{lemma:FFLE_Mod}, $\dMod(\varphi_T)\colon\dMod(\dcat{D}_T)\to\dMod(\dcat{D})$ is a
  fully faithful local equivalence. The remainder of the first claim follows from
  Theorem~\ref{thm:Mod_vs_bo} and the definition of $\dcat{D}_T^{\bo}$.

  For the second claim, assume $U(\bo)\subseteq\mathrm{iso}(\cat{S})$. From
  Theorem~\ref{thm:Mod_vs_bo} and the first part of the proposition, it suffices to consider the
  composition
  \[ \begin{tikzcd}
    \dcat{D}_T^{\bo} \ar[r,"\varphi_T^{\bo}"]
      & \dcat{D}^{\bo} \ar[r,"\cod"]
      & \dcat{D}.
  \end{tikzcd} \]
  By definition, both $\varphi_T^{\bo}$ and $\cod$ are local equivalences, hence the composition is
  also.

  To see that $(\cod\varphi_T^{\bo})_0$ is fully faithful, consider a pair of objects $p\colon
  Fs\twoheadrightarrow D$ and $p'\colon Fs'\twoheadrightarrow D'$ in $(\dcat{D}_T^{\bo})_0$, and a
  vertical morphism $f\colon D\to D'$ in $\dcat{D}_0$. In the square
  \[ \begin{tikzcd}
    \dcat{D}_0(Fs,Fs') \ar[r,"p'\circ\textrm{--}"] \ar[d,"\iso"']
      & \dcat{D}_0(Fs,D') \ar[d,"\iso"] \\
    \cat{S}(s,UFs') \ar[r,"Up'\circ\textrm{--}"']
      & \cat{S}(s,UD')
  \end{tikzcd} \]
  which commutes by naturality of the adjunction bijection, the bottom function is a bijection since
  $U(p')$ is an isomorphism for any $p'\in\bo$. Hence the top function is a bijection, which shows
  that there exists a unique lift of $f$ to a morphism in $(\dcat{D}_T^{\bo})_0$:
  \[ \begin{tikzcd}
    Fs \ar[r,"\hat{f}"] \ar[d,two heads,"p"']
      & Fs' \ar[d,two heads,"p'"] \\
    D \ar[r,"f"'] & D'
  \end{tikzcd} \]
  as desired.
\end{proof}

\begin{corollary}
    \label{cor:TrCat_ObjectFree}
  There are fully faithful local equivalences of equipments, in the left column, and equivalences of
  2-categories, in the right column:
  \begin{align*}
    \dMod(\dFrMonProf)  &\to \dMonProf &\MMon(\dFrMonProf)&\equiv\MMonFrObCat\\
    \dMod(\dFrTrProf)   &\to \dTrProf  &\MMon(\dFrTrProf)&\equiv\TTrFrObCat\\
    \dMod(\dFrCompProf) &\to \dCompProf &\MMon(\dFrCompProf)&\equiv\CCompFrObCat.
  \end{align*}
\end{corollary}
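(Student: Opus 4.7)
The plan is to verify that each of the three cases $(\dMonProf, \FM \dashv \UM)$, $(\dTrProf, \FT \dashv \UT)$, $(\dCompProf, \FC \dashv \UC)$ satisfies the hypotheses of Proposition~\ref{prop:objectfree_Mod_bo}, then invoke that result to obtain the displayed fully faithful local equivalences. The equivalences of 2-categories in the right column will then follow by passing to vertical 2-categories and using the identification $\MMon(\dcat{D}) = \VVer(\dMod(\dcat{D}))$ together with $\VVer(\dcat{D}_T^{\bo}) = \MMonFrObCat$ (and similarly for the other flavors).

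First I would recall that $\dMonProf$, $\dTrProf$, and $\dCompProf$ are exact by Propositions~\ref{prop:MonProf_exact}, \ref{prop:TrProf_exact}, and \ref{prop:CompProf_exact}, and have local reflexive coequalizers as established in Section~\ref{sec:monoidal_profunctors}. The adjunctions $\FM \dashv \UM$, $\FT \dashv \UT$, $\FC \dashv \UC$ are exactly those displayed in \eqref{eqn:adjunctions}, so the Kleisli categories $\Set_{\TM}$, $\Set_{\TT}$, $\Set_{\TC}$ coincide with $\FrMonCat$, $\FrTrCat$, $\FrCompCat$ by definition of the equipments $\dFrMonProf$, $\dFrTrProf$, $\dFrCompProf$.

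The key hypothesis to verify is that $U(\bo) \subseteq \mathrm{iso}(\Set)$ in each case. By Proposition~\ref{prop:(bo,ff)_really_is}, a vertical map is $\bo$ in any of these equipments precisely when it is a bijective-on-objects functor in the usual sense. Since $\UM$, $\UT$, $\UC$ are the underlying-set-of-objects functors (in each case, the right adjoint to the free construction), they send a bijective-on-objects functor to a bijection of sets, hence to an isomorphism in $\Set$. This is the only genuinely case-specific verification needed, and it is immediate once the characterization of $\bo$ morphisms is in hand.

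With the hypotheses verified, Proposition~\ref{prop:objectfree_Mod_bo} produces the fully faithful local equivalences
\[
  \dMod(\dFrMonProf) \to \dMonProf, \quad
  \dMod(\dFrTrProf) \to \dTrProf, \quad
  \dMod(\dFrCompProf) \to \dCompProf
\]
given as the composites $\cod \circ \varphi_T^{\bo} \circ \equiv$. For the right column, taking vertical 2-categories in the commuting square of Proposition~\ref{prop:objectfree_Mod_bo} gives an equivalence $\VVer(\dMod(\dcat{D}_T)) \equiv \VVer(\dcat{D}_T^{\bo})$; unwinding the definitions yields $\MMon(\dFrMonProf) \equiv \MMonFrObCat$, and likewise in the traced and compact cases. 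I expect no real obstacle beyond the bookkeeping of matching up notation between the abstract setting of Proposition~\ref{prop:objectfree_Mod_bo} and the concrete one here; the mildly subtle point is confirming that $\UC$, rather than having values in monoids-with-involution, is indeed just the underlying-set functor so that $\bo$ maps land in $\mathrm{iso}(\Set)$, but this is built into how the adjunctions in \eqref{eqn:adjunctions} were set up.
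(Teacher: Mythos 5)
Your proposal is correct and follows the same route as the paper: the left column is the second part of Proposition~\ref{prop:objectfree_Mod_bo} applied to the three exact equipments, and the right column comes from applying $\VVer$ to the equivalence $\dMod(\dcat{D}_T)\equiv\dcat{D}_T^{\bo}$ from its first part. Your explicit check that $U(\bo)\subseteq\mathrm{iso}(\Set)$ via Proposition~\ref{prop:(bo,ff)_really_is} is exactly the hypothesis the paper leaves implicit, and it is verified correctly.
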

\begin{proof}
  The left column comes from the second part of Proposition~\ref{prop:objectfree_Mod_bo}, while the
  right column follows by applying $\VVer$ to the equivalence
  $\dMod(\dcat{D}_T)\equiv\dcat{D}_T^{\bo}$ in the first part of
  Proposition~\ref{prop:objectfree_Mod_bo}.
\end{proof}

The last piece needed for the proof of \ref{thm:TheoremA} is to show that the equivalence between
copresheaves and monoids in $\dCompProf$ and $\dTrProf$ proven in Section~\ref{sec:special_CompProf}
restricts to $\dFrCompProf$ and $\dFrTrProf$.

\begin{lemma}
    \label{lem:FrCompProf_Psh_Mon}
  There are equivalences of fibrations
  \begin{equation*}
    \begin{tikzcd}[column sep=-1.25em]
      \CPsh(\dFrCompProf) \ar[rr,"\equiv"] \ar[dr,two heads,"\MOb"']
        && \Mon(\dFrCompProf) \ar[dl,two heads,"\MOb"] \\
      & \FrCompCat &
    \end{tikzcd}
    \quad\tn{and}\quad
    \begin{tikzcd}[column sep=-1.25em]
      \CPsh(\dFrTrProf) \ar[rr,"\equiv"] \ar[dr,two heads,"\MOb"']
        && \Mon(\dFrTrProf) \ar[dl,two heads,"\MOb"] \\
      & \FrTrCat &
    \end{tikzcd}
  \end{equation*}
\end{lemma}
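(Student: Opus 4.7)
The plan is to deduce both equivalences from the corresponding ones in $\dCompProf$ and $\dTrProf$ (the two arising in the proof of \ref{thm:TheoremB}) by pulling back along the inclusions
\[
\varphi_{\TC}\colon\dFrCompProf\to\dCompProf,\qquad \varphi_{\TT}\colon\dFrTrProf\to\dTrProf,
\]
which by Definition~\ref{def:kleisli_equipment} are fully faithful local equivalences. The key observation is that in each case $\varphi_{T}$ preserves the terminal object: the terminal object of $\Set_T$ (for any of our monads $T$) is the empty set $\emptyset$, since $\Set_T(s,\emptyset)=\Set\bigl(s,T(\emptyset)\bigr)$ is a singleton because $F(\emptyset)$ is the terminal monoidal/compact/traced category (it contains only the unit $I$). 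Hence $\varphi_{T}$ carries $\emptyset\in\FrCompCat$ (resp.\ $\FrTrCat$) to $F(\emptyset)$, which is terminal in $\CompCat$ (resp.\ $\TrCat$).

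Having established this, the first step is to invoke Lemma~\ref{lem:Mon_pullback} and Lemma~\ref{lem:Psh_pullback}, which give pseudo-pullback squares
\[
\begin{tikzcd}[column sep=large]
\Mon(\dFrCompProf)\ar[r]\ar[d,two heads,"\MOb"']\ar[dr,phantom,"\lrcorner" very near start]
 & \Mon(\dCompProf)\ar[d,two heads,"\MOb"]\\
\FrCompCat\ar[r,"\varphi_{\TC}"'] & \CompCat
\end{tikzcd}
\qquad
\begin{tikzcd}[column sep=large]
\CPsh(\dFrCompProf)\ar[r]\ar[d,two heads,"\MOb"']\ar[dr,phantom,"\lrcorner" very near start]
 & \CPsh(\dCompProf)\ar[d,two heads,"\MOb"]\\
\FrCompCat\ar[r,"\varphi_{\TC}"'] & \CompCat
\end{tikzcd}
\]
and analogous squares for $\dFrTrProf$. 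Combined with the equivalences of fibrations $\CPsh(\dCompProf)\equiv\Mon(\dCompProf)$ and $\CPsh(\dTrProf)\equiv\Mon(\dTrProf)$ established in \eqref{eqn:mon_prof_equivalence} during the proof of \ref{thm:TheoremB}, the second step is to note that pulling back an equivalence of fibrations along a functor yields an equivalence of fibrations. This produces the desired equivalences $\CPsh(\dFrCompProf)\equiv\Mon(\dFrCompProf)$ over $\FrCompCat$ and $\CPsh(\dFrTrProf)\equiv\Mon(\dFrTrProf)$ over $\FrTrCat$.

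Apart from the observation about terminal objects, every step above is a direct invocation of a previously established lemma, so I expect no genuine obstacle; the only place that requires care is verifying that $\varphi_T$ preserves terminal objects, which as indicated above amounts to the fact that $\emptyset$ is the terminal object in each of the Kleisli categories $\Set_{\TM}$, $\Set_{\TC}$, $\Set_{\TT}$ because the free monoidal (compact, traced) category on the empty set contains only the unit object $I$.
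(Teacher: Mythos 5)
Your proposal is correct and follows essentially the same route as the paper: the paper likewise pulls back the equivalence $\CPsh(\dCompProf)\equiv\Mon(\dCompProf)$ from \eqref{eqn:mon_prof_equivalence} along $\varphi_{\MTCfont{C}}$ (resp.\ $\varphi_{\MTCfont{T}}$) using Lemma~\ref{lem:Mon_pullback} and Lemma~\ref{lem:Psh_pullback}, noting that these local equivalences preserve terminal objects. Your explicit verification that $\emptyset$ is terminal in the Kleisli categories is a correct elaboration of a point the paper leaves implicit.
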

\begin{proof}
  The equipment functor $\varphi_{\MTCfont{C}}\colon\dFrCompProf\to\dCompProf$ (resp.\
  $\varphi_{\MTCfont{T}}\colon\dFrTrProf\to\dTrProf$) is by definition a local equivalence, and it
  preserves terminal objects in the vertical category. Thus using Lemma~\ref{lem:Mon_pullback} and
  Lemma~\ref{lem:Psh_pullback} we construct the desired equivalence
  $\CPsh(\dFrCompProf)\to\Mon(\dFrCompProf)$ as the pullback along $\varphi_{\MTCfont{C}}$ of the
  equivalence $\CPsh(\dCompProf)\to\Mon(\dCompProf)$ from \eqref{eqn:mon_prof_equivalence}, and similarly for the traced case.
\end{proof}

Finally, we are ready to prove \ref{thm:TheoremA}.

\begin{named}{Theorem A}
    \label{thm:TheoremA}
  There are equivalences of 1-categories
  \begin{equation*}
    \int^{\LabSet\in\Set_{\TC}}(\LCob{\LabSet})\alg \to \CompFrObCat
    \quad\text{and}\quad
    \int^{\LabSet\in\Set_{\TT}}(\LCob{\LabSet})\alg \to \TrFrObCat.
  \end{equation*}
\end{named}
\begin{proof}
  First note that, essentially by definition (as well as the fact that $\Cob/\LabSet$ is the free compact category on the set $\LabSet$; see \cite{KellyLaplaza,Abramsky2}), there are isomorphisms of fibrations
  \begin{equation*}
    \begin{tikzcd}[column sep=-2em]
      \int\limits^{\mathclap{\LabSet\in\Set_{\TC}}}(\LCob{\LabSet})\alg
          \ar[rr,phantom,"\iso"] \ar[dr,two heads]
        && \CPsh(\dFrCompProf) \ar[dl,two heads] \\
      & \FrCompCat. &
    \end{tikzcd}
    \quad\text{and}\quad
    \begin{tikzcd}[column sep=-2em]
      \int\limits^{\mathclap{\LabSet\in\Set_{\TT}}}(\LCob{\LabSet})\alg
          \ar[rr,phantom,"\iso"] \ar[dr,two heads]
        && \CPsh(\dFrTrProf) \ar[dl,two heads] \\
      & \FrTrCat &
    \end{tikzcd}
  \end{equation*}
  By Lemma~\ref{lem:FrCompProf_Psh_Mon}, we have equivalences of 1-categories
  \begin{equation}
    \CPsh(\dFrCompProf)\simeq\Mon(\dFrCompProf)
    \quad\text{and}\quad
    \CPsh(\dFrTrProf)\simeq\Mon(\dFrTrProf).
  \end{equation}
  The result now follows from Corollary~\ref{cor:TrCat_ObjectFree}, which provides equivalences of
  2-categories:
  \[
    \MMon(\dFrCompProf)\equiv\CCompFrObCat
    \quad\text{and}\quad
    \MMon(\dFrTrProf)\equiv\TTrFrObCat. \qedhere
  \]
\end{proof}

\section{A traceless characterization of \texorpdfstring{$\TTrCatStrong$}{TrCat}}
  \label{sec:characterization_of_traced}

In this final section, we briefly record a construction of a 2-category bi-equivalent to the
2-category $\TTrCatStrong$ of traced categories with strong functors between them. A distinguishing feature of this construction is that it
makes no mention of a trace operation, nor anything akin to the usual traced category axioms. It is
a direct consequence of the machinery used to prove our main theorems, and is---to the best of our
knowledge---a new result.

The forgetful functors between the categories of structured monoidal categories commute with the
underlying set functors, i.e.\ the following diagram commutes:
\begin{equation}
    \label{eqn:tetrahedron}
  \begin{tikzcd}
    & \TrCat \ar[dr,"\UTM"] \ar[dd,"\UT" near start] & \\
    \CompCat \ar[rr,crossing over,"\UCM"' near start] \ar[dr,"\UC"'] \ar[ru,"\UCT"]
      && \MonCat \ar[dl,"\UM"] \\
    & \Set &
  \end{tikzcd}
\end{equation}
Because the functor $\UCM\colon\CompCat\to\MonCat$ commutes with the right adjoints of the
adjunctions to $\Set$, i.e.\ $\UM\UCM=\UC$, it induces a monad morphism $\alpha\colon\TM\to\TC$
(i.e.\ a natural transformation $\alpha\colon\TM\to\TC$ compatible with the units and
multiplications), given by the composition of the natural transformations
\begin{equation*} \begin{tikzcd}[column sep=2cm]
  \TM \ar[r,equal] \ar[d,dashed,"\alpha"']
    & \UM\FM \ar[r,"\UM\FM\eta_{\MTCfont{C}}"]
    & \UM\FM\UC\FC \ar[d,equal] \\
  \TC
    & \UM\UCM\FC \ar[l,equal]
    & \UM\FM\UM\UCM\FC \ar[l,"\UM\epsilon_{\MTCfont{M}}\UCM\FC"]
\end{tikzcd} \end{equation*}
The component $\alpha_{\LabSet}$ of this transformation is simply the evident inclusion of the free
monoid on a set $\LabSet$ into the free monoid-with-involution on $\LabSet$. The monad map $\alpha$
induces a functor between the Kleisli categories:
\begin{equation*}
    \label{eqn:FMC}
  \FMC\colon\Set_{\TM}\to\Set_{\TC}.
\end{equation*}
Because the monads $\TM$ and $\TT$ are in fact isomorphic, we have
\[
  \FrTrCat = \Set_{\TT} \iso \Set_{\TM} = \FrMonCat.
\]

The following proposition defines the 2-category $\TTrCatStrong$ of traced categories, and strong functors, purely in terms of
$\dCompProf$, $\dMonProf$, and the adjunctions $\FM\dashv\UM$ and $\FC\dashv\UC$. In particular, it
does not involve any explicit mention of the trace structure defined in \cite{JoyalStreetVerity}. However, it does use the main result of the appendix, Corollary~\ref{cor:object_frees}.

\begin{proposition}
  Consider the functor $\FrMonCat\To{\FMC}\FrCompCat\To{k_{\MTCfont{C}}}\CompCat$ and the induced
  equipment $\dcat{F}\coloneqq(k_{\MTCfont{C}}\circ\FMC)^*(\dCompProf)$. There is a fully faithful local equivalence $\dMod(\dcat{F})\to\dTrProf$ and an equivalence of
  2-categories $\MMon(\dcat{F})\equiv\TTrCatStrong$.
\end{proposition}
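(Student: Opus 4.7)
The strategy is to identify $\dcat{F}$ with $\dFrTrProf$ up to equipment equivalence and then invoke the machinery already developed. The key observation is that $\FC$ and $\Int\circ\FT$ are both left adjoint to $\UC\colon\CompCat\to\Set$, using the decomposition $\UC=\UT\circ\UCT$ visible in diagram~\eqref{eqn:tetrahedron}. By uniqueness of left adjoints there is a canonical natural isomorphism $\FC\iso\Int\circ\FT$ of functors $\Set\to\CompCat$. Combined with the monad isomorphism $\TM\iso\TT$ (noted at the start of Section~\ref{sec:monoids_on_free}) and the resulting Kleisli isomorphism $\beta\colon\Set_{\TM}\To{\iso}\Set_{\TT}$, this yields a natural isomorphism of functors $\Set_{\TM}\to\CompCat$ relating $k_{\MTCfont{C}}\circ\FMC$ (used to define $\dcat{F}$) and $\Int\circ k_{\TT}\circ\beta$ (used to define $\dFrTrProf$).

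Since pullback equipments along naturally isomorphic functors are equivalent---a consequence of Remark~\ref{rem:strict_vs_pseudo_pullback} applied to the defining pseudo-pullback squares---one obtains an equivalence of equipments $\dcat{F}\equiv\dFrTrProf$. By Lemma~\ref{lemma:FFLE_Mod}, applying $\dMod$ preserves equivalences, so $\dMod(\dcat{F})\equiv\dMod(\dFrTrProf)$; composing with the fully faithful local equivalence $\dMod(\dFrTrProf)\to\dTrProf$ from Corollary~\ref{cor:TrCat_ObjectFree} gives the first claim. For the second claim, applying $\VVer$ to the same equipment equivalence and chaining with Corollary~\ref{cor:TrCat_ObjectFree} and Corollary~\ref{cor:object_frees} (proved in the appendix) gives
\[
    \MMon(\dcat{F})\equiv\MMon(\dFrTrProf)\equiv\TTrFrObCat\equiv\TTrCatStrong.
\]

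The main point requiring care is bookkeeping: one must verify that the natural isomorphism $\FC\iso\Int\circ\FT$ induces a genuine equivalence of equipments (not merely of vertical categories), and that this equivalence remains compatible with the horizontal structure through the subsequent applications of $\dMod$ and $\VVer$. All of this follows from the stability of pullback equipments under natural isomorphism together with the 2-functoriality of $\dMod$ and $\VVer$, so no new technical ideas beyond those already developed in the paper are required.
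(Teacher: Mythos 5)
Your proposal is correct and follows essentially the same route as the paper: identify $\dcat{F}$ with $\dFrTrProf$ (the paper asserts the relevant pullback square directly, while you justify it via uniqueness of left adjoints giving $\FC\iso\Int\circ\FT$), then conclude by Corollary~\ref{cor:TrCat_ObjectFree} and Corollary~\ref{cor:object_frees}. Your extra care about the natural isomorphism versus strict equality of the classifying functors is a reasonable elaboration of the step the paper labels ``easy to see,'' not a different argument.
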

\begin{proof}
  By combining the definitions of $\dFrTrProf$ and $\dTrProf$, it is easy to see that the following
  square is a pullback:
  \begin{equation} \begin{tikzcd}
      \label{eqn:FTrProf_pullback}
    \dFrTrProf_1 \ar[r] \ar[d,two heads] \ar[dr,phantom,"\lrcorner" very near start]
      & \dCompProf_1 \ar[d,two heads] \\
    \FrMonCat\times\FrMonCat \ar[r,"k_{\MTCfont{C}}\circ\FMC"']
      & \CompCat\times\CompCat
  \end{tikzcd} \end{equation}
  Thus we have an equivalence $\dcat{F}\equiv\dFrTrProf$, and the result follows by
  Corollary~\ref{cor:TrCat_ObjectFree} and Corollary~\ref{cor:object_frees}.
\end{proof}

\appendix
\chapter{Appendix}
  \label{appendix}

This section is mostly independent from the rest of the paper. It is really only used to prove
Corollary~\ref{cor:object_frees}, the three biequivalences
\[
  \MMonFrObCat \to \MMonCatStrong \qquad
  \TTrFrObCat \to \TTrCatStrong \qquad
  \CCompFrObCat \to \CCompCatStrong.
\]
Here, $\MMonFrObCat$ (resp.\ $\TTrFrObCat$ and $\CCompFrObCat$) is the 2-category of objectwise-free
monoidal (resp.\ traced and compact) categories and strict functors between them (see
Section~\ref{sec:monoids_on_free}), whereas $\MMonCatStrong$ (resp.\ $\TTrCatStrong$ and
$\CCompCatStrong$) is the 2-category of monoidal (resp.\ traced and compact) categories with
arbitrary objects and strong functors between them. This result will not be new to experts, but we
found it difficult to find in the literature.

\section{Arrow objects and mapping path objects}\label{sec:app_arrowObs}

\begin{definition}
    \label{def:arrow_object}
  Let $a$ be an object in a 2-category $\ccat{C}$. An \emph{arrow object} of $a$ is an object
  $\Ar{a}$ together with a diagram
  \[ \begin{tikzcd}
    \Ar{a} \ar[r,bend left,"\dom" {domA,anchor = south}] \ar[r,bend right,"\cod"' {codA,anchor = north}]
        & a
    \twocellA{\kappa}
  \end{tikzcd} \]
  which is universal among such diagrams: any diagram as on the left below factors uniquely as on
  the right
  \begin{equation*}
    \begin{tikzcd}
        x \ar[r,bend left,"d" domA] \ar[r,bend right,"c"' codA]
          & a
        \twocellA{\alpha}
    \end{tikzcd}
    \quad = \quad
    \begin{tikzcd}
        x \ar[r,"\hat{\alpha}"]
          & \Ar{a} \ar[r,bend left,"\dom" {domA,anchor = south}] \ar[r,bend right,"\cod"' {codA,anchor=north}]
          & a
        \twocellA{\kappa}
    \end{tikzcd}
  \end{equation*}
  Moreover, given a commutative square in $\ccat{C}(x,a)$, i.e.\ another $d'\colon x\to a$,
  $c'\colon x\to a$, $\alpha'\colon d'\Rightarrow c'$ as on the left above, and 2-cells $\beta\colon
  d\Rightarrow d'$ and $\gamma\colon c\Rightarrow c'$ such that
  $\alpha'\circ\beta=\gamma\circ\alpha$, there is a unique
  $(\beta,\gamma)\colon\hat{\alpha}\Rightarrow\hat{\alpha'}$ such that $\dom(\beta,\gamma)=\beta$
  and $\cod(\beta,\gamma)=\gamma$.

  We say that $\ccat{C}$ \emph{has arrow objects} if an arrow object $a^2$ exists for each object
  $a\in\cat{C}$.
\end{definition}

\begin{example}
    \label{ex:arrow_objects}
  The 2-categories $\CCat$, $\CCat_{\iso}$, $\MMonCatStrong$, $\TTrCatStrong$, and $\CCompCatStrong$
  have arrow objects. Clearly for an object $\cat{A}\in\CCat$, the usual arrow category $\cat{A}^2$
  of arrows and commutative squares, has the necessary universal property. Similarly, the arrow
  category of $\cat{A}$ in $\CCat_{\iso}$ is the category whose objects are isomorphisms in
  $\cat{A}$, and whose morphisms are commutative squares (in which the other morphisms need not be
  isomorphisms).

  Arrow objects in $\MMonCatStrong$ are preserved by the forgetful functor to $\CCat$. If
  $(\cat{M},I,\otimes)$ is a monoidal category then the arrow object $\cat{M}^2$ (in $\CCat$) has a
  natural monoidal product
  \[
    \cat{M}^2\times\cat{M}^2\iso(\cat{M}\times\cat{M})^2\To{\otimes^2}\cat{M}^2,
  \]
  and monoidal unit given by the identity map $\id_I$ on the unit of $\cat{M}$. The maps
  $\dom,\cod\colon\cat{M}^2\to\cat{M}$ are strict monoidal functors, and the transformation
  $\kappa\colon\dom\to\cod$ is monoidal as well. Suppose given a diagram of strong monoidal
  functors:
  \[ \begin{tikzcd}
    \cat{X} \ar[r,bend left,"d" domA] \ar[r,bend right,"c"' codA]
      & \cat{M}
    \twocellA{\alpha}
  \end{tikzcd} \]
  The universal properties of the arrow object $\cat{M}^2$ in $\CCat$ guarantee that the induced
  functor $\hat{\alpha}\colon\cat{X}\to\cat{M}^2$ is strong monoidal. Note that if $d,c$ are strict
  monoidal functors then $\hat{\alpha}$ will be as well.

  The 2-category $\CCompCatStrong$ also has arrow objects, and they are preserved by the forgetful functor
  $\CCompCatStrong\to\CCat_{\iso}$. Recall from Section~\ref{sec:monoidal,compact,traced} that every
  natural transformation between compact categories is an isomorphism. Thus for a compact category
  $\cat{C}$, the arrow category $\cat{C}^2$ has as objects the isomorphisms $a\To{\iso} b$ in
  $\cat{C}$, and as morphisms the commuting squares. This is compact: the dual of $f\colon a\to b$ is
  $(f^{-1})^\ast\colon a^\ast\to b^\ast$.

  The 2-morphisms between traced categories are also defined to be isomorphisms (see
  Remark~\ref{rem:traced_2morphisms}). For a traced category $\cat{T}\in\TTrCatStrong$, the arrow
  object $\cat{T}^2$ has the isomorphisms in $\cat{T}$ as objects and commuting squares as
  morphisms; i.e.\ here too arrow objects are preserved by the 2-functor $\TTrCatStrong\to\Cat_{\iso}$. To see the
  traced structure of $\cat{T}^2$, suppose given objects $a\colon A\To{\iso} A'$, $b\colon
  B\To{\iso} B'$, and $u\colon U\To{\iso} U'$, as well as a morphism $(f,g)\colon a\otimes u\to
  b\otimes u$ as in the diagram to the left
  \begin{equation}
      \label{eqn:arrow_traced}
    \begin{tikzcd}
      A\otimes U\ar[d,"a\otimes u"']\ar[r,"f"]&B\otimes U\ar[d,"b\otimes u"]\\
      A'\otimes U'\ar[r,"g"']&B'\otimes U'
    \end{tikzcd}
    \qquad\qquad
    \begin{tikzcd}[column sep=large]
      A\ar[d,"a"']\ar[r,"\Tr^U_{A,B}(f)"]&B\ar[d,"b"]\\
      A'\ar[r,"\Tr^{U'}_{A',B'}(g)"']&B'
    \end{tikzcd}
  \end{equation}
  Composing with $\id_{B'}\otimes u^{-1}$, we have
  \[
    (\id_{B'}\otimes u^{-1})\circ(b\otimes u)\circ f
        =(\id_{B'}\otimes u^{-1})\circ g\circ(a\otimes u)
  \]
  as morphisms $A\otimes U\to B'\otimes U$. The commutativity of the right-hand diagram in
  \eqref{eqn:arrow_traced} follows from this equation and the axioms of traced categories
  \cite{JoyalStreetVerity}.
\end{example}

\begin{lemma}
    \label{lem:2_adjunction}
  Let $R\colon\ccat{C}\to\ccat{D}$ be a 2-functor, and suppose that $\ccat{C}$ has arrow objects.
  Then $R$ has a left 2-adjoint if and only if $R$ has a left 1-adjoint and $R$ preserves arrow
  objects.
\end{lemma}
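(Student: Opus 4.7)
The plan is as follows. The forward direction is immediate: every 2-right adjoint is in particular a 1-right adjoint, and 2-right adjoints preserve all 2-limits, of which arrow objects are a special case.

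For the backward direction, suppose $L\dashv R$ is a 1-adjunction with natural bijection $\phi_{d,c}\colon\ccat{C}(Ld,c)\To{\iso}\ccat{D}(d,Rc)$, and that $R$ preserves arrow objects. The goal is to promote $\phi$ to an isomorphism of hom-categories, 2-natural in $d$ and $c$, for that is precisely the data of a 2-adjunction. The key observation, built into Definition~\ref{def:arrow_object}, is that arrow objects represent 2-cells: given $x,a\in\ccat{C}$, a 2-cell $\alpha\colon f\Rightarrow g$ in $\ccat{C}(x,a)$ is the same data as a 1-cell $\hat\alpha\colon x\to\Ar a$ satisfying $\dom\circ\hat\alpha=f$ and $\cod\circ\hat\alpha=g$, and the final clause of that definition characterizes 2-cells between such $\hat\alpha,\hat{\alpha'}$ in $\ccat{C}(x,\Ar a)$ as commutative squares of 2-cells in $\ccat{C}(x,a)$.

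Using this, I would define the action of $\phi_{d,c}$ on a 2-cell $\alpha\colon f\Rightarrow g$ in $\ccat{C}(Ld,c)$ by transporting its classifying map $\hat\alpha\colon Ld\to\Ar c$ across the 1-adjunction to obtain a 1-cell $d\to R(\Ar c)$. Since $R$ preserves arrow objects, there is a canonical isomorphism $R(\Ar c)\iso\Ar{(Rc)}$ compatible with domain and codomain, and so this is the classifying map of a 2-cell $\phi(\alpha)\colon\phi(f)\Rightarrow\phi(g)$ in $\ccat{D}(d,Rc)$. The inverse assignment is analogous, using that a 2-cell in $\ccat{D}(d,Rc)$ classifies a 1-cell $d\to\Ar{(Rc)}\iso R(\Ar c)$, which transports back across the 1-adjunction to a map $Ld\to\Ar c$ and thence to a 2-cell in $\ccat{C}$. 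Since both assignments are given by conjugating the bijection $\phi_{d,\Ar c}$ by the same isomorphism $R(\Ar c)\iso\Ar{(Rc)}$, they are mutually inverse.

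It remains to check that $\phi$ preserves vertical composition of 2-cells, as well as 2-naturality in $d$ and $c$, both of which follow from applying the universal property of arrow objects to the pasting diagrams in question together with naturality of the 1-adjunction bijection and 2-functoriality of $R$. Concretely, vertical composition of $\alpha\colon f\Rightarrow g$ and $\beta\colon g\Rightarrow h$ in $\ccat{C}(Ld,c)$ is classified by the unique map $Ld\to\Ar c$ determined by the pair $(\hat\alpha,\hat\beta)\colon Ld\to\Ar c\times_c\Ar c$, and this universal characterization is transported across $\phi$ by the same argument as above. The main obstacle is the coherent identification between iterated arrow objects, pullbacks, and composition of 2-cells, but this is precisely what the universal property in Definition~\ref{def:arrow_object} is designed to encode, so no new structure needs to be introduced.
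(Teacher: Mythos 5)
Your proof is essentially the paper's: both arguments rest on the same observation, that a 2-cell is classified by a 1-cell into an arrow object, and that since $R(\Ar{c})\iso\Ar{(Rc)}$ these classifying maps transport back and forth across the 1-adjunction, giving a bijection on 2-cells between the hom-categories. The paper phrases this as verifying the 2-dimensional universal property of the unit $\eta_d$ (for each $\alpha\colon f\Rightarrow g$ in $\ccat{D}(d,Rc)$ a unique $\alpha'$ with $R(\alpha')\eta_d=\alpha$) rather than as an isomorphism of hom-categories, but it is the same computation. One caveat on your final paragraph: the verification of vertical composition as written invokes the pullback $\Ar{c}\times_c\Ar{c}$, which is not among the hypotheses and need not exist in $\ccat{C}$. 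The detour is unnecessary: the comparison $\ccat{C}(Ld,c)\to\ccat{D}(d,Rc)$, $h\mapsto Rh\circ\eta_d$, is already a functor because composition with $R$ and whiskering with $\eta_d$ are functorial, and unwinding your definition shows that transporting classifying maps sends $\alpha$ to exactly $R(\alpha)*\eta_d$; so once the map is known to be bijective on objects and on 2-cells it is an isomorphism of categories, with no separate compatibility check required.
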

\begin{proof}
  First suppose $R$ has a left 1-adjoint $L$ and preserves arrow objects. We want to show that given
  morphisms $f,g\colon D\to RC$ in $\ccat{D}$ and a 2-cell $\alpha\colon f\Rightarrow g$, there is a
  unique $\alpha'$ in $\ccat{C}$ such that $R(\alpha')\eta_D=\alpha$. From the 1-adjunction, we know
  there are unique $f',g'\colon LD\to C$ such that $Rf'\circ\eta_D=f$ and $Rg'\circ\eta_D=g$. Using
  the arrow object $R(C^2)=(RC)^2$, there is a unique morphism $\hat{\alpha}\colon D\to RC^2$ such
  that $\kappa_{RC}\hat{\alpha}=\alpha$. Using the 1-adjunction again, there is a unique
  $\hat{\alpha}'\colon LD\to C^2$ such that $R\hat{\alpha}'\circ\eta_D=\hat{\alpha}$. Finally, we
  let $\alpha'\coloneqq\kappa_C\hat{\alpha}'$, and check
  \[
    R(\alpha')\eta_D = R(\kappa_C)R(\hat{\alpha}')\eta_D = \kappa_{RC}\hat{\alpha} = \alpha.
  \]
  It is clear that this $\alpha'$ is the unique such 2-cell.

  Conversely, it is easy to check that if $R$ has a left 2-adjoint, then $R$ preserves arrow objects
  (right adjoints preserve limits).
\end{proof}

The following result was promised above; see \eqref{eqn:adjunctions} and footnote~\ref{foot:adjunctions}.

\begin{corollary}
    \label{cor:2_adjunctions_MonTrComp}
  There are 2-adjunctions
  \[
    \FM\colon\CCat\leftrightarrows\MMonCatStrong\cocolon\UM
    \qquad
    \FT\colon\CCat_{\iso}\leftrightarrows\TTrCatStrong\cocolon\UT
    \qquad
    \FC\colon\CCat_{\iso}\leftrightarrows\CCompCatStrong\cocolon\UC
  \]
  that extend the 1-adjunctions constructed in \cite{Abramsky2}.
\end{corollary}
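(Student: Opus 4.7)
The plan is to apply Lemma~\ref{lem:2_adjunction} to each of the three forgetful 2-functors $\UM$, $\UT$, and $\UC$ separately. Recall that the lemma requires three hypotheses: (i) the domain 2-category has arrow objects, (ii) the 2-functor in question preserves them, and (iii) it has a left 1-adjoint. The strategy is to note that all three ingredients have already been supplied by earlier material in this appendix together with the cited reference.

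First, I would invoke \cite{Abramsky2} to record the existence of the 1-adjunctions $\FM\dashv\UM$, $\FT\dashv\UT$, and $\FC\dashv\UC$, supplying hypothesis (iii). Next, Example~\ref{ex:arrow_objects} establishes both (i) and (ii) in one go for each of the three cases: it constructs arrow objects in $\MMonCatStrong$, $\TTrCatStrong$, and $\CCompCatStrong$ by lifting the arrow categories of $\CCat$ (respectively $\CCatIso$) along the relevant forgetful 2-functor, and in doing so exhibits these forgetful 2-functors as preserving arrow objects on the nose. A small auxiliary observation worth stating explicitly is that $\UM$, $\UT$, and $\UC$ really are 2-functors, not merely 1-functors --- but this is immediate since a monoidal (resp.\ traced) natural transformation is in particular a natural transformation between the underlying functors, and composition of 2-cells on either side agrees.

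With all hypotheses of Lemma~\ref{lem:2_adjunction} verified, the three 2-adjunctions follow. The only mildly subtle point --- and the closest thing to an obstacle --- is the $\CCatIso$ case: one must check that the arrow object of a traced or compact category, as constructed in Example~\ref{ex:arrow_objects}, has the correct universal property with respect to \emph{invertible} 2-cells rather than arbitrary ones. But this is precisely what is verified in that example (every 2-cell between strong functors of compact or traced categories is automatically an isomorphism, as noted in Section~\ref{sec:monoidal,compact,traced} and Remark~\ref{rem:traced_2morphisms}), so no additional work is required. The resulting 2-adjunctions extend the 1-adjunctions of \cite{Abramsky2} because the 2-adjoint produced by Lemma~\ref{lem:2_adjunction} has the same underlying 1-adjoint by construction.
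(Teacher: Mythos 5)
Your proposal is correct and follows essentially the same route as the paper: the paper's proof likewise cites \cite{Abramsky2} for the underlying 1-adjunctions, Example~\ref{ex:arrow_objects} for the existence and preservation of arrow objects, and then concludes by Lemma~\ref{lem:2_adjunction}. The extra remarks you include (2-functoriality of the forgetful functors, the $\CCat_{\iso}$ subtlety) are sound but already implicit in the paper's cited example.
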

\begin{proof}
  Let $R$ be either $\UM$, $\UT$, or $\UC$. Its underlying 1-functor has a left adjoint, constructed
  in \cite{Abramsky2}. We showed in Example~\ref{ex:arrow_objects} that $\MMonCatStrong$,
  $\TTrCatStrong$, and $\CCompCatStrong$ have arrow objects, which are preserved by $R$. The result
  follows by Lemma~\ref{lem:2_adjunction}.
\end{proof}

\begin{definition}
    \label{def:mapping_path_objects}
  Let $f\colon a\to b$ be a morphism in a 2-category $\ccat{C}$. A \emph{mapping path object} of
  $f$ is an object $\Path{f}$ together with a diagram
  \[ \begin{tikzcd}[column sep=small]
    & |[alias=domA]| \Path{f} \ar[dl,"\pi_a"'] \ar[dr,"\pi_b"] & \\
    a \ar[rr,"f"' codA] && b
    \twocelliso[pos=.6]{A}{\rho}
  \end{tikzcd} \]
  where $\rho$ is an isomorphism, which is universal among such diagrams: any diagram as on the
  left below, in which $\alpha$ is an isomorphism, factors uniquely as on the right
  \begin{equation*}
    \begin{tikzcd}[column sep=small]
      & |[alias=domA]| x \ar[dl,"g"'] \ar[dr,"h"] & \\
      a \ar[rr,"f"' codA] && b
      \twocelliso[pos=.6]{A}{\alpha}
    \end{tikzcd}
    \quad = \quad
    \begin{tikzcd}[column sep=small]
      & x \ar[d,dashed,"\hat{\alpha}"] \ar[ddl,bend right,"g"'] \ar[ddr,bend left,"h"] & \\
      & |[alias=domA]| \Path{f} \ar[dl,"\pi_a"' inner sep=1pt] \ar[dr,"\pi_b" inner sep=1pt] & \\
      a \ar[rr,"f"' codA] && b
      \twocelliso[pos=.6]{A}{\rho}
    \end{tikzcd}
  \end{equation*}
  Moreover, given another $g'\colon x\to a$, $h'\colon x\to b$, $\alpha'\colon fg'\iso h'$ as on
  the left above, and isomorphisms $\beta\colon g\iso g'$ and $\gamma\colon h\iso h'$ such that
  $\alpha'\circ f\beta=\gamma\circ\alpha$, there is a unique isomorphism
  $(\beta,\gamma)\colon\hat{\alpha}\iso\hat{\alpha'}$ such that $\pi_a(\beta,\gamma)=\beta$ and
  $\pi_b(\beta,\gamma)=\gamma$.

  We say that $\ccat{C}$ \emph{has mapping path objects} if a mapping path object $P(f)$ exists for
  each morphism $f\colon a\to b$ in $\cat{C}$.
\end{definition}

\begin{example}
    \label{ex:mapping_paths}
  The 2-categories $\CCat$, $\CCat_{\iso}$, $\MMonCatStrong$, $\TTrCatStrong$, and $\CCompCatStrong$
  have mapping path objects. For a morphism $F\colon\cat{A}\to\cat{B}$ in $\CCat$, the mapping path
  category $P(F)$ is a cousin to the comma category $(F\downarrow\id_{\cat{B}})$: the objects are
  triples
  \[
    \Ob(P(F))\coloneqq
        \{ (A,B,i) \mid
          A\in\Ob(\cat{A}), B\in\Ob(\cat{B}), i\colon F(A)\To{\iso}B\text{ is an isomorphism}\}
  \]
  and a morphism $(A,B,i)\to (A',B',i')$ in $P(F)$ consists of a pair of morphisms $A\to A'$ in
  $\cat{A}$ and $B\to B'$ in $\cat{B}$ such that the evident diagram commutes. The 2-category
  $\CCat_{\iso}$  has exactly the same mapping path objects as $\CCat$.

  The mapping path object of a strong functor $F\colon\cat{A}\to\cat{B}$ between monoidal, traced,
  or compact categories exists and is preserved by the forgetful functors to $\CCat$ and
  $\CCat_{\iso}$. In the monoidal case, the mapping path object $P(F)$ of the functor between
  underlying categories has a canonical monoidal structure, e.g.,
  \[
    (A,B,i)\otimes (A',B',i')\coloneqq(A\otimes A',B\otimes B', (i\otimes i')\circ\mu_{A,A'}^{-1})
  \]
  where $\mu_{A,A'}$ is the coherence isomorphism for $F$. The projection functors
  $A\From{\pi_A}P(F)\To{\pi_B}B$ are strict. Given a diagram
  \[ \begin{tikzcd}[column sep=small]
    & |[alias=domA]| \cat{X} \ar[dl,"G"'] \ar[dr,"H"] & \\
    \cat{A} \ar[rr,"F"' codA] && \cat{B}
    \twocelliso[pos=.6]{A}{\alpha}
  \end{tikzcd} \]
  in which $G$ and $H$ are strong (resp.\ strict) monoidal functors, the induced functor
  $\hat{\alpha}\colon\cat{X}\to P(F)$, given on objects by $x\mapsto(G(x),H(x),\alpha_x)$, will be
  strong (resp.\ strict) as well.

  If $\cat{A}$ and $\cat{B}$ are traced categories and $F$ is a traced functor, one obtains a
  canonical trace structure on the monoidal category $P(F)$ in a manner similar to that shown in
  Example~\ref{ex:arrow_objects}. If $\cat{A}$ and $\cat{B}$ are compact categories, then the
  mapping path monoidal category $P(F)$ is naturally compact: the dual of $(A,B,i)$ is $(A^*,B^*,
  (i^{-1})^*)$.
\end{example}

\begin{remark}
    \label{rem:pres_joint_detect}
  The arrow objects and mapping path objects for the 2-categories $\MMonCatStrong$, $\TTrCatStrong$,
  and $\CCompCatStrong$ were discussed in
  Examples~\ref{ex:arrow_objects}~and~\ref{ex:mapping_paths}. Each has a notion of cone, in fact a
  certain weighted limit cone in $\CCat$, though we will not discuss that notion here. We mentioned
  in passing that the structure morphisms for that cone are strict monoidal functors and that they
  ``preserve and jointly detect'' strictness in the sense of
  Definition~\ref{def:preserve_jdetect_strictness} below. In particular, the 2-categories
  $\MMonCat$, $\TTrCat$, and $\CCompCat$ also have arrow objects and mapping path objects, and the
  inclusions of strict-into-strong (e.g.\ $\MMonCat\to\MMonCatStrong$) preserve them. Looking back
  at Examples~\ref{ex:arrow_objects}~and~\ref{ex:mapping_paths}, we see that the forgetful functors
  \[
    \UM\colon\MMonCat\to\CCat
    \qquad
    \UT\colon\TTrCat\to\CCat_{\iso}
    \qquad
    \UC\colon\CCompCat\to\Cat_{\iso}
  \]
  preserve arrow objects and mapping path objects.
\end{remark}

\begin{definition}
    \label{def:fully_faithful}
  A morphism $f\colon a\to b$ in a 2-category $\ccat{C}$ is \emph{fully faithful} if the functor
  $f^*\colon\ccat{C}(x,a)\to\ccat{C}(x,b)$, induced by composition with $f$, is fully faithful for every $x$.
  That is, $f$ is fully faithful if, for every diagram
  \begin{equation*} \begin{tikzcd}[row sep=large]
    x \ar[r,bend left,"u"] \ar[r,bend right,"v"']
        \ar[d,equal]
      & a \ar[d,"f"] \\
    x \ar[r,bend left,"u'" domA] \ar[r,bend right,"v'"' codA]
      & b
    \twocellA{\alpha'}
  \end{tikzcd} \end{equation*}
  such that $fu=u'$ and $fv=v'$, there exists a unique $\alpha\colon u\Rightarrow v$ such that
  $f\alpha=\alpha'$.

  A morphism $f\colon a\to b$ in a 2-category $\ccat{C}$ is \emph{bijective-on-objects} if it is
  left orthogonal to every fully faithful morphism.
\end{definition}

\begin{definition}
    \label{def:surjective_equivalence}
  Say that a morphism $f\colon a\to b$ in a 2-category $\ccat{C}$ is a \emph{surjective equivalence}
  if it can be extended to an adjoint equivalence $g\dashv f$ in which the unit is the identity.
  That is, there is a morphism $g\colon b\to a$ and 2-cell $\epsilon\colon gf\iso 1_a$ such that
  $fg=1_b$, $\epsilon g=1_g$, and $f\epsilon=1_f$.
\end{definition}

\begin{lemma}
    \label{lem:fully_faithful_surjective_equiv}
  Let $f\colon a\to b$ and $g\colon b\to a$ be morphisms in a 2-category such that $fg=1_b$. Then
  $f$ (together with $g$) is a surjective equivalence if and only if $f$ is fully faithful in the
  sense of Definition~\ref{def:fully_faithful}.
\end{lemma}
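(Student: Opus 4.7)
The plan is to prove the two directions separately, both using straightforward manipulations of the 2-cells involved.

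For the forward direction, assume $f$ together with $g$ is a surjective equivalence, with witness $\epsilon\colon gf\iso 1_a$ satisfying $fg=1_b$, $\epsilon g=1_g$, and $f\epsilon=1_f$. Given $u,v\colon x\to a$ and any 2-cell $\alpha'\colon fu\Rightarrow fv$, I would define
\[
  \alpha \;\coloneqq\; \epsilon v \circ g\alpha' \circ \epsilon^{-1}u \colon u\Rightarrow v,
\]
and then compute $f\alpha = f\epsilon v\circ (fg)\alpha'\circ f\epsilon^{-1}u = 1_{fv}\circ\alpha'\circ 1_{fu}=\alpha'$, using both $f\epsilon=1_f$ and $fg=1_b$. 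Uniqueness of the lift follows from the naturality square $\beta\circ\epsilon u = \epsilon v\circ gf\beta$, which gives $\beta = \epsilon v\circ gf\beta\circ\epsilon^{-1}u$; substituting $f\beta=\alpha'$ recovers $\alpha$.

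For the backward direction, assume $fg=1_b$ and $f$ is fully faithful. Since $f\cdot gf = (fg)f = f = f\cdot 1_a$, full faithfulness of $f_*\colon\ccat{C}(a,a)\to\ccat{C}(a,b)$ produces a unique 2-cell $\epsilon\colon gf\Rightarrow 1_a$ with $f\epsilon=1_f$, and similarly a unique $\epsilon'\colon 1_a\Rightarrow gf$ with $f\epsilon'=1_f$. To see $\epsilon$ is invertible, compute $f(\epsilon\circ\epsilon')=f\epsilon\circ f\epsilon'=1_f\circ 1_f=1_f=f(1_{1_a})$, so by uniqueness $\epsilon\circ\epsilon'=1_{1_a}$; analogously $\epsilon'\circ\epsilon=1_{gf}$.

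It remains to verify the triangle identity $\epsilon g=1_g$. Both $\epsilon g$ and $1_g$ are 2-cells $g\Rightarrow g$ (since $gfg=g\cdot 1_b=g$), and whiskering by $f$ gives $f(\epsilon g)=(f\epsilon)g=1_f\cdot g=1_{1_b}=f(1_g)$. Applying the uniqueness clause of full faithfulness of $f_*\colon\ccat{C}(b,a)\to\ccat{C}(b,b)$ at the pair $(g,g)$ yields $\epsilon g=1_g$. No step here looks difficult; the only subtlety is remembering that full faithfulness of $f$ must be invoked twice for invertibility of $\epsilon$ (once for each direction) and once more to obtain the triangle identity, all of which are routine applications of the defining universal property from Definition~\ref{def:fully_faithful}.
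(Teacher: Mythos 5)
Your proof is correct and follows essentially the same route as the paper: the forward direction amounts to the observation that a surjective equivalence makes $f^*$ on hom-categories fully faithful (you give the explicit lift $\epsilon v\circ g\alpha'\circ\epsilon^{-1}u$ where the paper just cites that $f^*$ is an equivalence), and the backward direction constructs $\epsilon$ from full faithfulness exactly as the paper does. The only difference is that you spell out the invertibility of $\epsilon$ and the identity $\epsilon g=1_g$, which the paper dismisses as "easy to check"; your verifications of these are correct.
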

\begin{proof}
  Suppose $g\dashv f$ is a surjective equivalence. Then for any $x$,
  $f^*\colon\ccat{C}(x,a)\to\ccat{C}(x,b)$ is an equivalence of categories, hence fully faithful.
  Thus $f$ is fully faithful.

  Conversely suppose $f$ is fully faithful. Then because $fgf=f=f 1_a$, there is a unique
  $\epsilon\colon gf\Rightarrow 1_a$ such that $f\epsilon=1_f$. It is easy to check that $\epsilon$
  is an isomorphism, and that $\epsilon g=1_g$.
\end{proof}

\begin{lemma}
    \label{lem:mapping_path_equiv}
  For any morphism $f\colon a\to b$ with a mapping path object $\Path{f}$, the projection
  $\pi_a\colon \Path{f}\to a$ is a surjective equivalence, hence fully faithful.
\end{lemma}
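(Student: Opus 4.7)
The plan is to directly exhibit the section and the invertible counit data required by Definition~\ref{def:surjective_equivalence}, using the universal property of the mapping path object both on 1-cells and on 2-cells; then the ``hence fully faithful'' follows immediately from Lemma~\ref{lem:fully_faithful_surjective_equiv}. Write $\rho\colon f\pi_a\iso\pi_b$ for the structure isomorphism of $\Path{f}$.

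First I would construct a section $g\colon a\to\Path{f}$. Apply the universal property of $\Path{f}$ to the diagram with legs $1_a\colon a\to a$ and $f\colon a\to b$, where the connecting 2-cell is the identity $1_f\colon f\cdot 1_a\iso f$. This yields a unique $g$ with $\pi_a g=1_a$, $\pi_b g=f$, and $\rho\cdot g=1_f$. In particular the first equation gives the ``surjective'' part: $\pi_a g=1_a$.

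Next I would construct the counit $\epsilon\colon g\pi_a\iso 1_{\Path{f}}$ by invoking the ``moreover'' clause of Definition~\ref{def:mapping_path_objects}. Computing projections, $\pi_a(g\pi_a)=\pi_a$ and $\pi_b(g\pi_a)=f\pi_a$ with connecting 2-cell the identity, while $\pi_a\cdot 1_{\Path{f}}=\pi_a$ and $\pi_b\cdot 1_{\Path{f}}=\pi_b$ with connecting 2-cell $\rho$. Taking $\beta=1_{\pi_a}$ and $\gamma=\rho$, the required compatibility $\rho\circ f\beta=\gamma\circ 1_{f\pi_a}$ reduces to $\rho=\rho$, so we obtain a unique isomorphism $\epsilon\colon g\pi_a\iso 1_{\Path{f}}$ with $\pi_a\epsilon=1_{\pi_a}$ and $\pi_b\epsilon=\rho$. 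This gives the triangle identity $\pi_a\epsilon=1_{\pi_a}$ for free.

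The remaining triangle identity $\epsilon g=1_g$ I would check by projecting and appealing to the uniqueness half of the 2-cell universal property. Since $\pi_a g=1_a$, both $\epsilon g$ and $1_g$ are 2-cells $g\Rightarrow g$; projecting along $\pi_a$ and $\pi_b$ gives $\pi_a(\epsilon g)=(\pi_a\epsilon)g=1_{1_a}=\pi_a(1_g)$ and $\pi_b(\epsilon g)=(\pi_b\epsilon)g=\rho\cdot g=1_f=\pi_b(1_g)$, so uniqueness forces $\epsilon g=1_g$. Thus $(g,\epsilon)$ exhibits $\pi_a$ as a surjective equivalence, and Lemma~\ref{lem:fully_faithful_surjective_equiv} then yields fully faithfulness. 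No step presents a genuine obstacle; the only subtle point is remembering to use the identity 2-cell (not $\rho$) when defining $g$, so that $\rho\cdot g=1_f$ holds and makes the two triangle identities match up on the $\pi_b$ side.
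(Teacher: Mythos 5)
Your proposal is correct and follows essentially the same route as the paper's proof: construct the section via the 1-dimensional universal property applied to the cone $(1_a,f,1_f)$, then obtain the counit $\epsilon$ and the triangle identity $\epsilon g=1_g$ from the 2-dimensional universal property, and conclude via Lemma~\ref{lem:fully_faithful_surjective_equiv}. The verification of the compatibility condition and the projection computations match the paper's argument step for step.
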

\begin{proof}
  By the universal property of $\Path{f}$ there is a unique morphism $s\colon a\to \Path{f}$ such
  that
  \begin{equation*}
    \begin{tikzcd}[column sep=small]
      & |[alias=domA]| a \ar[dl,equal] \ar[dr,"f"] & \\
      a \ar[rr,"f"' codA] && b
      \twocellA[pos=.6]{1_f}
    \end{tikzcd}
    \quad = \quad
    \begin{tikzcd}[column sep=small]
      & a \ar[d,dashed,"s"] \ar[ddl,bend right,equal] \ar[ddr,bend left,"f"] & \\
      & |[alias=domA]| \Path{f} \ar[dl,"\pi_a"' inner sep=1pt] \ar[dr,"\pi_b" inner sep=1pt] & \\
      a \ar[rr,"f"' codA] && b
      \twocelliso[pos=.6]{A}{\rho}
    \end{tikzcd}
  \end{equation*}
  Because $\pi_a s \pi_a=\pi_a$ and $\pi_b s \pi_a = f\pi_a\iso\pi_b$, we can use the 2-dimensional
  universality of $\Path{f}$ to obtain a unique isomorphism $\epsilon\colon s\pi_a\iso
  1_{\Path{f}}$ such that $\pi_a\epsilon=1_{\pi_a}$ and $\pi_b\epsilon=\rho$. By 2-dimensional
  universality once more, we obtain $\epsilon s=1_{s}$ from the following facts
  \[
    \pi_a\epsilon s=1_{\pi_a}s=\pi_a1_s
    \qquad \text{and} \qquad
    \pi_b\epsilon s=\rho s=1_f=\pi_b1_s.
  \]

  It follows from Lemma~\ref{lem:fully_faithful_surjective_equiv} that $\pi_a$ is fully faithful.
\end{proof}

\section{Strict vs.\ strong morphisms}
  \label{sec:strict_vs_strong}

Between monoidal categories, there are several notions of functor: strict, strong, lax, and colax.
While researchers tend to be most interested in the 2-category $\MMonCatStrong$ of monoidal
categories and strong functors, and similarly $\TTrCatStrong$ and $\CCompCatStrong$, the strict
functors are theoretically important. In this section, we will present a formal framework which
abstracts our examples of interest, and which provides tools for working with and connecting strict
and strong morphisms.

In the case of monoidal categories, there is an inclusion $\iota\colon\MMonCat\to\MMonCatStrong$ as
well as a forgetful functor $\MMonCatStrong\to\CCat$. The cases of traced and compact monoidal
categories are similar, except there we can factor the forgetful functor through the 2-category (or,
if one prefers, the (2,1)-category) $\CCatIso$ of categories, functors, and natural
\emph{isomorphisms}. In these examples, we will want to be able to represent strong functors in
terms of strict ones, by means of a left adjoint to the inclusion of strict into strong. In
Definition~\ref{def:strictifiable} we will enumerate properties which are sufficient to prove the
existence of this left adjoint, and which are satisfied by all of our motivating examples; see
Example~\ref{ex:strictifiable}.

\begin{definition}
    \label{def:preserve_jdetect_strictness}
  Let $\scat{D}$ and $\pcat{D}$ be 2-categories and let $\iota\colon\scat{D}\to\pcat{D}$ be a
  2-functor that is identity-on-objects, faithful, and locally fully faithful. We say that the
  triple $(\scat{D},\pcat{D},i)$ \emph{has mapping path objects} if $\pcat{D}$ has mapping path
  objects as in Definition~\ref{def:mapping_path_objects} such that
  \begin{itemize}
    \item for any $f\colon a\to b$ in $\pcat{D}$, the structure morphisms
      $a\From{\pi_a}P(f)\To{\pi_b}b$ are in $\scat{D}$, and
    \item the pair $(\pi_a,\pi_b)$ \emph{preserves and jointly detects morphisms in $\scat{D}$} in
      the following sense: for any morphism $\ell\colon x\to P(f)$ in $\pcat{D}$, we have that
      $\ell$ is in $\scat{D}$ if and only if the compositions $\pi_a\circ\ell$ and $\pi_b\circ\ell$
      are in $\scat{D}$.
  \end{itemize}
  We say that the triple $(\scat{D},\pcat{D},\iota)$ \emph{has arrow objects} if the analogous
  conditions hold.
\end{definition}

For the following definition, one may keep in mind the case $\scat{D}=\MMonCat$,
$\pcat{D}=\MMonCatStrong$, and $\ccat{C}=\CCat$. See Example~\ref{ex:strictifiable} below.

\begin{definition}
    \label{def:strictifiable}
  Let $\scat{D}$, $\pcat{D}$, and $\ccat{C}$ be 2-categories, and let $U\colon\pcat{D}\to\ccat{C}$
  and $\iota\colon\scat{D}\to\pcat{D}$ be 2-functors. We say that the collection
  $(\scat{D},\pcat{D},\ccat{C},U,\iota)$ \emph{admits strong morphism classifiers} if it satisfies
  the following properties:
  \begin{enumerate}
    \item\label{item:boff}
      The 2-category $\scat{D}$ has a bijective-on-objects/fully faithful factorization.
    \item\label{item:ioofflff}
      The functor $\iota$ is identity-on-objects, faithful, and locally fully faithful.
    \item\label{item:arrowMPO}
      The triple $(\scat{D},\pcat{D},\iota)$ has both arrow objects and mapping path objects
      (Definition~\ref{def:preserve_jdetect_strictness}).
    \item\label{item:left2adj}
      The functor $U\iota\colon\scat{D}\to\ccat{C}$ has a left 2-adjoint $F$.
    \item\label{item:pres_ff}
      The functor $U\iota$ preserves fully faithful morphisms (equivalently, $F$ preserves
      bijective-on-objects morphisms).
    \item\label{item:pres_MPO}
      The functor $U$ preserves mapping path objects.
    \item\label{item:reflids}
      The functor $U$ reflects identity 2-cells.
    \item\label{item:createSEs}
      The pair ($U\iota,U)$ creates surjective equivalences: given any morphism $f\colon A\to B$ in
      $\scat{D}$ and surjective equivalence $g\dashv U\iota(f)$ in $\ccat{C}$, there is a unique
      surjective equivalence $\tilde{g}\dashv\iota f$ in $\pcat{D}$ such that $U\tilde{g}=g$.
  \end{enumerate}
\end{definition}

\begin{remark}
  Other than those involving bijective-on-objects or fully faithful morphisms, all of the properties
  enumerated in Definition~\ref{def:strictifiable} (namely, Properties~\ref{item:ioofflff},
  \ref{item:arrowMPO}, \ref{item:left2adj}, \ref{item:pres_MPO}, \ref{item:reflids}, and
  \ref{item:createSEs}) hold whenever $\scat{D}$ is the 2-category of strict algebras and strict
  morphisms for a 2-monad on $\ccat{C}$, and $\pcat{D}$ is the 2-category of strict algebras and
  pseudo-morphisms. While our main examples can be seen to be algebras for some 2-monad, we have
  found it easier to isolate just those properties we needed to prove
  Theorem~\ref{thm:strong_classifier}.

  This section was strongly inspired by \cite{Bourke} and \cite{LackHomotopy}.
\end{remark}

\begin{example}
    \label{ex:strictifiable}
  Suppose that the collection $(\scat{D},\pcat{D},\ccat{C},\iota,U)$ is defined as in one of the
  following cases:
  \begin{itemize}
    \item $\scat{D}=\MMonCat, \quad \pcat{D}=\MMonCatStrong, \quad \ccat{C}=\CCat$, where
      $\iota\colon\scat{D}\to\pcat{D}$ is the inclusion and $U\colon\pcat{D}\to\ccat{C}$ is the
      forgetful functor;
    \item $\scat{D}=\TTrCat, \quad \pcat{D}=\TTrCatStrong, \quad \ccat{C}=\CCat_{\iso}$, where
      $\iota\colon\scat{D}\to\pcat{D}$ is the inclusion and $U\colon\pcat{D}\to\ccat{C}$ is the
      forgetful functor; or
    \item $\scat{D}=\CCompCat, \quad \pcat{D}=\CCompCatStrong, \quad \ccat{C}=\CCat_{\iso}$, where
      $\iota\colon\scat{D}\to\pcat{D}$ is the inclusion and $U\colon\pcat{D}\to\ccat{C}$ is the
      forgetful functor.
  \end{itemize}
  We will now show that in each case the collection admits strong morphism classifiers.

  Property \ref{item:boff} is proved as Proposition~\ref{prop:(bo,ff)_really_is} and the exactness of
  $\dMonProf$, $\dTrProf$, and $\dCompProf$; see Section~\ref{sec:exactness_proofs}. Property
  \ref{item:ioofflff} is obvious for $\MMonCat$ and $\CCompCat$, and by definition (see
  Remark~\ref{rem:traced_2morphisms}) for $\TTrCat$. Property \ref{item:arrowMPO} is shown in
  Remark~\ref{rem:pres_joint_detect}. Property~\ref{item:left2adj} is shown in
  Corollary~\ref{cor:2_adjunctions_MonTrComp}. Property~\ref{item:pres_ff} is a consequence of
  Proposition~\ref{prop:(bo,ff)_really_is} and
  Propositions~\ref{prop:MonProf_exact},~\ref{prop:CompProf_exact},~and~\ref{prop:TrProf_exact}.
  Property~\ref{item:pres_MPO} is shown in Remark~\ref{rem:pres_joint_detect}.
  Property~\ref{item:reflids} is obvious: if $\alpha\colon F\to G$ is a 2-cell in $\scat{D_s}$ whose
  underlying natural transformation (in $\CCat$) is the identity then it is the identity. It remains
  to prove Property~\ref{item:createSEs}; we first treat the case $\scat{D}=\MMonCat$.

  Suppose that $F\colon A\to B$ is a strict monoidal functor and that there is a surjective
  equivalence $g\dashv U\iota(F)$ in $\CCat$. Let $f\colon a\to b$ denote $U\iota(F)$, so $g\colon
  b\to a$. By Definition~\ref{def:surjective_equivalence}, we have a 2-cell $\epsilon\colon gf\iso
  1_a$ and equalities $1_b=fg$, $\epsilon g=1_g$ and $f\epsilon=1_f$. A strong functor $G\colon B\to
  A$ with $UG=g$ of course acts the same as $g$ on objects and morphisms. Thus it suffices to give
  the coherence isomorphisms $\mu\colon I_A\To{\cong}G(I_B)$ and $\mu_{x,y}\colon G(x)\otimes
  G(y)\To{\cong} G(x\otimes y)$ for objects $x,y\in B$, which satisfy the required equations. Define
  $\mu$ to be the composite $I_A\To{\epsilon^{-1}}gf(I_A)=g(I_B)$, and define $\mu_{x,y}$ to be the
  composite \[ gx\otimes gy\To{\epsilon^{-1}}gf(gx\otimes gy)=g(fgx\otimes fgy)=g(x\otimes y). \]
  The requisite equations can be checked by direct computation, though they actually follow from a
  more general theory (doctrinal adjunctions); see \cite{Kelly}.

  Property~\ref{item:createSEs} holds for the case $\scat{D}=\CCompCat$ because it is a full
  subcategory of $\MMonCat$. For the case $\scat{D}=\TTrCat$, suppose given a strict traced functor
  $F\colon A\to B$, and let $G\colon B\to A$ be the associated monoidal functor constructed above.
  To see that it is traced, note that $B\To{G}A\To{F}B$ is the identity, so $G$ is fully faithful,
  and the result follows from Remark~\ref{rmk:fully_faithful_and_trace}.
\end{example}

Since $\iota$ is identity on objects, we often suppress it for convenience. We draw ordinary arrows
$\;\cdot\to\cdot\;$ for morphisms in $\scat{D}$ and snaked arrows, $\;\cdot\rightsquigarrow\cdot\;$
for morphisms in $\pcat{D}$.

\begin{theorem}
    \label{thm:strong_classifier}
  Suppose that $(\scat{D},\pcat{D},\ccat{C},U,\iota)$ admits strong morphism classifiers. Then the
  functor $\iota$ has a left 2-adjoint $Q\colon\pcat{D}\to\scat{D}$. The counit $q_A\colon QA\to A$
  of this adjunction is given by factoring the counit $\epsilon_A$ of the $F\dashv U\iota$
  adjunction:
  \begin{equation} \begin{tikzcd}
      \label{eqn:factorization_counit}
    FUA \ar[r,two heads,"r_A"] \ar[dr,"\epsilon_A"']
      & QA \ar[d,hook,"q_A"] \\
    & A
  \end{tikzcd} \end{equation}
\end{theorem}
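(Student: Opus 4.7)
The plan is to construct the left 2-adjoint $Q$ in stages, using the $(\bo,\ff)$ factorization system on $\scat{D}$ together with the 1-adjunction $F \dashv U\iota$. First I will define $Q$ on objects exactly as indicated in the statement: for each $A \in \pcat{D}$ the $(\bo,\ff)$-factorization in $\scat{D}$ (Property~\ref{item:boff}) applied to the 1-adjunction counit $\epsilon_A\colon FUA \to A$ gives $\epsilon_A = q_A\circ r_A$ with $r_A$ bijective-on-objects and $q_A$ fully faithful.

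Next I construct the unit $u_A\colon A\rightsquigarrow QA$ of the prospective 2-adjunction. Setting $s_A \coloneqq Ur_A\circ\eta_{UA}$, the triangle identity for $F\dashv U\iota$ yields $Uq_A\circ s_A = U\epsilon_A\circ\eta_{UA} = 1_{UA}$, so $s_A$ is a section of $Uq_A$ in $\ccat{C}$. Property~\ref{item:pres_ff} ensures $Uq_A$ is fully faithful in $\ccat{C}$, whence Lemma~\ref{lem:fully_faithful_surjective_equiv} promotes $Uq_A$ to a surjective equivalence in $\ccat{C}$ with section $s_A$. The creation of surjective equivalences (Property~\ref{item:createSEs}) then produces a unique $u_A\colon A\rightsquigarrow QA$ in $\pcat{D}$ with $Uu_A = s_A$ which exhibits $\iota q_A$ as a surjective equivalence in $\pcat{D}$; in particular $\iota q_A\circ u_A = 1_A$ strictly, and there is an iso $u_A\circ\iota q_A\iso 1_{QA}$, so $u_A$ and $\iota q_A$ are quasi-inverse equivalences in $\pcat{D}$.

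The main step is to establish the universal property characterizing the pair $(QA, u_A)$ as the value of a left 2-adjoint at $A$: for each $B\in\scat{D}$, the functor
\[\scat{D}(QA, B)\longrightarrow\pcat{D}(A,\iota B),\qquad \phi\longmapsto\iota\phi\circ u_A\]
is an equivalence of categories, 2-natural in $A$ and $B$. Full faithfulness follows from $\iota$ being faithful and locally fully faithful (Property~\ref{item:ioofflff}) together with $u_A$ being an equivalence in $\pcat{D}$. For essential surjectivity—given $\psi\colon A\rightsquigarrow \iota B$ in $\pcat{D}$, produce $\check\psi\colon QA \to B$ in $\scat{D}$ with $\iota\check\psi\circ u_A \cong \psi$—the plan is to strictify $\psi\circ\iota q_A\colon QA\rightsquigarrow B$. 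Pseudo-naturality of $\epsilon$ in $\pcat{D}$ combined with $\epsilon_A = q_A r_A$ supplies an iso $\psi\circ\iota q_A\circ\iota r_A\iso\iota(\epsilon_B\circ FU\psi)$, exhibiting $\psi\circ\iota q_A$, after precomposition with the bo $\iota r_A$, as isomorphic to the $\iota$-image of a strict morphism in $\scat{D}$. Orthogonality of $r_A$ against the ff $q_B$ in $\scat{D}$, together with the machinery of arrow and mapping path objects in $\pcat{D}$ whose structure maps lie in $\scat{D}$ (Properties~\ref{item:arrowMPO}, \ref{item:pres_MPO}, and~\ref{item:reflids}), then descends this iso to produce the required $\check\psi$.

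The main obstacle will be this essential surjectivity step. The delicate point is that the pseudo-naturality isomorphism for $\epsilon$ lives in $\pcat{D}$, whereas the factorization system is in $\scat{D}$; one must use the joint-detect-strictness property of the arrow and mapping path objects, together with the fact that $U$ reflects identity 2-cells, to transfer $\pcat{D}$-data into $\scat{D}$-data and in particular to identify the bijective-on-objects part of the factorization of $\epsilon_B\circ FU\psi$ with $r_A$ up to unique isomorphism. Once the hom-equivalence is established and shown to be 2-natural in $A$ and $B$, the extension of $Q$ to a 2-functor on 1-cells and 2-cells, together with the triangle identities and the remaining 2-adjunction axioms, follows formally from the universal property and the 2-dimensional uniqueness inherent in the $(\bo,\ff)$ factorization.
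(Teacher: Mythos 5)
Your setup matches the paper's: the definition of $Q$ by $(\bo,\ff)$-factorization of $\epsilon_A$, the section $Ur_A\circ\eta_{UA}$ of $Uq_A$, and the use of Property~\ref{item:pres_ff}, Lemma~\ref{lem:fully_faithful_surjective_equiv}, and Property~\ref{item:createSEs} to produce the unit $u_A$ (the paper's $p_A$) are all exactly as in the paper. The divergence, and the problem, is in your main step.

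First, you aim only for an \emph{equivalence} of hom-categories $\scat{D}(QA,B)\to\pcat{D}(A,\iota B)$ with essential surjectivity up to isomorphism ($\iota\check\psi\circ u_A\cong\psi$). That would give a biadjunction, not the left \emph{2-adjoint} claimed in the statement; a 2-adjoint requires an isomorphism of hom-categories, i.e.\ a \emph{unique} strict $f'$ with $f=f'\circ p_A$ \emph{on the nose}. The paper gets the on-the-nose equality by exhibiting a comparison 2-cell whose image under $U$ is an identity and invoking Property~\ref{item:reflids}, and gets the 2-dimensional part for free from Lemma~\ref{lem:2_adjunction}; your sketch never upgrades the isomorphism to an equality. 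Second, and more seriously, your essential-surjectivity step rests on ``pseudo-naturality of $\epsilon$ in $\pcat{D}$,'' i.e.\ an isomorphism $\psi\circ\epsilon_A\iso\epsilon_B\circ FU\psi$ for a strong $\psi$. This is not among the axioms of Definition~\ref{def:strictifiable}, and it does not follow formally: $\epsilon$ is a natural transformation of functors on $\scat{D}$, so there is no naturality square to deform when $\psi$ is merely in $\pcat{D}$ (in the monoidal example the two composites genuinely differ by the coherence isomorphisms of $\psi$). Producing that isomorphism abstractly is essentially the whole difficulty, and the paper's device for avoiding it is precisely the mapping path object: replace $\psi$ by the strict span $A\xhookleftarrow{\pi_A}\Path{\psi}\xrightarrow{\pi_B}B$, transpose the section $s_{U\psi}$ to a strict $\tilde\psi\colon FUA\to\Path{\psi}$, and lift through the orthogonality square $r_A\boxslash\pi_A$ (note: against the fully faithful $\pi_A\colon\Path{\psi}\to A$ of Lemma~\ref{lem:mapping_path_equiv}, not against $q_B$ as you propose --- $q_B$ plays no role). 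Without that construction, or an independent proof of your pseudo-naturality claim from the listed properties, the argument does not go through.
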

\begin{proof}
  Define $Q$, $r$, and $q$ as in \eqref{eqn:factorization_counit}. We will begin by showing that
  $q_A$ is a surjective equivalence for any $A$, whose inverse $p_A\colon A\pto QA$ will become the
  unit of the $Q\dashv\iota$ adjunction. We write $U$ to denote $U\iota$, in a minor abuse of
  notation. Because $U$ creates surjective equivalences, it suffices to show that $Uq_A$ is a
  surjective equivalence. But $q_A$ is fully faithful by construction, so $Uq_A$ is fully faithful,
  hence by Lemma~\ref{lem:fully_faithful_surjective_equiv} it suffices to construct a section of
  $Uq_A$. We can easily check that $Ur_A\circ\eta_{UA}$ is such a section:
  \[ \begin{tikzcd}
    UA \ar[r,"\eta_{UA}"] \ar[rrd,bend right=18,equal]
      & UFUA \ar[r,"Ur_A"] \ar[dr,"U\epsilon_A"' inner sep=0pt]
      & UQA \ar[d,hook,"Uq_A"] \\
    && UA
  \end{tikzcd} \]
  Thus there is a unique surjective equivalence $p_A\dashv q_A$ in $\pcat{D}$ such that $Up_A=
  Ur_A\circ\eta_{UA}$.

  We next must show that the morphism $p_A\colon A\pto QA$ has the following universal property: for
  any morphism $f\colon A\pto B$ in $\pcat{D}$, there is a unique morphism $f'\colon QA\to B$ in
  $\scat{D}$ for which $f=f'\circ p_A$. From this, it follows that $Q$ extends to a 1-functor which
  is left adjoint to $\iota$. It then follows from Lemma~\ref{lem:2_adjunction} that $Q$ extends to
  a 2-functor which is left 2-adjoint to $\iota$, completing the proof of the theorem.

  First, given an $f\colon A\pto B$, define a morphism $\tilde{f}\colon FUA\to\Path{f}$ in
  $\scat{D}$ as the adjoint of the section $s_{Uf}\colon UA\to U(\Path{f})=\Path{Uf}$ defined as in
  Lemma~\ref{lem:mapping_path_equiv}, i.e.,~$\tilde{f}\coloneqq\epsilon_{\Path{f}}\circ F(s_{Uf})$.
  It follows by adjointness that the following diagram in $\scat{D}$ commutes:
  \[ \begin{tikzcd}
    & FUA \ar[dl,"\epsilon_A"'] \ar[d,"\tilde{f}"] \ar[r,"FUf"]
      & FUB \ar[d,"\epsilon_B"] \\
    A & \Path{f} \ar[l,hook',"\pi_A"] \ar[r,"\pi_B"']
      & B
  \end{tikzcd} \]
  Then by orthogonality there is a unique morphism $\tilde{f}$ in the diagram
  \[ \begin{tikzcd}
    FUA \ar[r,"\tilde{f}"] \ar[d,two heads,"r_A"']
      & \Path{f} \ar[d,hook,"\pi_A"] \ar[r,"\pi_B"]
      & B \\
    QA \ar[r,"q_A"'] \ar[ur,dashed,"\hat{f}" inner sep=0pt] & A
  \end{tikzcd} \]
  making the square commute, and we define $f'\coloneqq \pi_B\circ\hat{f}$.

  We next must check that our definition of $f'$ satisfies $f=f'\circ p_A$. We can construct an
  isomorphism 2-cell $f\iso f'\circ p_A$:
  \[ \begin{tikzcd}[column sep=tiny]
    & QA \ar[dr,hook,"q_A" inner sep=1pt] \ar[rr,"\hat{f}"]
      && |[alias=domA]| \Path{f} \ar[dl,hook,"\pi_A"' inner sep=1pt]
        \ar[dr,"\pi_B" inner sep=1pt] & \\
    A \ar[ur,squiggly,"p_a" inner sep=1pt] \ar[rr,equal]
      && A \ar[rr,squiggly,"f"' codA]
      && B
    \twocelliso[pos=.65]{A}{\rho_f}
  \end{tikzcd} \]
  We can check directly that the underlying 2-cell of $\rho_f\hat{f}p_a$ is the identity on $Uf$,
  \begin{align*}
    U(\rho_f\hat{f}p_a) &= \rho_{Uf}U(\hat{f})U(r_A)\eta_{UA} \\
    &= \rho_{Uf}U(\tilde{f})\eta_{UA} \\
    &= \rho_{Uf}s_{Uf} \\
    &= 1_{Uf}.
  \end{align*}
  Since $U$ reflects identity 2-cells, it follows that $\rho_f\hat{f}q_A$ is the identity,
  $f=f'\circ p_A$.

  Finally, we need to verify that if $f''\colon QA\to B$ is any other strict morphism such that
  $f=f''\circ p_A$, then $f''=f'$. We begin by factoring $f''=\pi_B\circ\hat{f}''$:
  \begin{equation}
      \label{eq:f_hat_fact}
    \begin{tikzcd}[column sep=0em]
      & |[alias=domA]| QA \ar[dl,"q_A"'] \ar[dr,equal] & \\
      A \ar[rr,squiggly,"p_A"' codA] \ar[d,equal]
        && QA \ar[d,"f''"] \\
      A \ar[rr,squiggly,"f"'] && B
      \twocelliso[pos=.6]{A}{}
    \end{tikzcd}
    \quad = \quad
    \begin{tikzcd}[column sep=.7em]
      & QA \ar[d,dashed,"\hat{f}''"] \ar[ddl,bend right,"q_A"'] \ar[ddr,bend left,"f''"] & \\
      & |[alias=domA]| \Path{f} \ar[dl,"\pi_A"' inner sep=1pt] \ar[dr,"\pi_B" inner sep=1pt] & \\
      A \ar[rr,squiggly,"f"' codA] && B
      \twocelliso[pos=.6]{A}{\rho}
    \end{tikzcd}
  \end{equation}
  It will then suffice to show that the diagram
  \[ \begin{tikzcd}
    FUA \ar[r,"\tilde{f}"] \ar[d,two heads,"r_A"']
      & \Path{f} \ar[d,hook,"\pi_A"] \\
    QA \ar[r,"q_A"'] \ar[ur,"\hat{f}''" inner sep=0pt] & A
  \end{tikzcd} \]
  commutes, as then $\hat{f}''=\hat{f}$ by orthogonality, and $f''=\pi_B\hat{f}''=\pi_B\hat{f}=f'$.
  The lower triangle $\pi_A\circ\hat{f}''=q_A$ follows directly from \eqref{eq:f_hat_fact}. To show
  that the upper triangle $\tilde{f}=\hat{f}''\circ r_A$ commutes, it suffices to check equality of
  the adjoints $s_{Uf}=U(\hat{f}''\circ r_A)\circ\eta_{UA}$. We will check this using the universal
  property of the mapping path object $U\Path{f}=\Path{Uf}$ by showing that
  $\rho_{Uf}U(\hat{f}''r_A)\eta_{UA}=\rho_{Uf}s_{Uf}$:
  \begin{align*}
    \begin{tikzcd}[ampersand replacement=\&,column sep=-0.5em]
      UA \ar[rr,"\eta_{UA}"] \ar[dr,"Up_A"']
      \&\& UFUA \ar[dl,"Ur_A"] \\
      \& UQA \ar[d,"U\hat{f}''"] \& \\
      \& |[alias=domA]| U\Path{f} \ar[dl,"\pi_{UA}"'] \ar[dr,"\pi_{UB}"] \& \\
      UA \ar[rr,"Uf"' codA] \&\& UB
      \twocelliso[pos=.6]{A}{\rho_{Uf}}
    \end{tikzcd}
    \quad &= \quad
    \begin{tikzcd}[ampersand replacement=\&,column sep=-0.5em]
      \& UA \ar[d,"Up_A"] \& \\
      \& |[alias=domA]| UQA \ar[dl,"Uq_A"'] \ar[dr,equal] \& \\
      UA \ar[rr,"Up_A"' codA] \ar[d,equal]
        \&\& UQA \ar[d,"Uf''"] \\
      UA \ar[rr,"Uf"'] \&\& UB
      \twocelliso[pos=.6]{A}{}
    \end{tikzcd}
    \\ &= \quad
    \begin{tikzcd}[ampersand replacement=\&]
      UA \ar[r,"Uf"] \ar[d,equal] \& UB \ar[d,equal] \\
      UA \ar[r,"Uf"'] \& UB
    \end{tikzcd}
    \quad = \quad
    \begin{tikzcd}[ampersand replacement=\&,column sep=0.5em]
      \& UA \ar[d,"s_{Uf}"] \ar[ddl,bend right,equal] \ar[ddr,bend left,"Uf"] \& \\
      \& |[alias=domA]| U\Path{f} \ar[dl,"\pi_{UA}"' inner sep=1pt]
        \ar[dr,"\pi_{UB}" inner sep=1pt] \& \\
      UA \ar[rr,"Uf"' codA] \&\& UB
      \twocelliso[pos=.6]{A}{\rho_{Uf}}
      \qedhere
    \end{tikzcd}
  \end{align*}
\end{proof}

\begin{example}
  Consider the case of Theorem~\ref{thm:strong_classifier} applied to the case where
  $\scat{D}=\MMonCat$, $\pcat{D}=\MMonCatStrong$, and $\ccat{C}=\CCat$. Given any monoidal category
  $A$, we construct the monoidal category $QA$ by factoring the counit:
  \[ \begin{tikzcd}
    \FM\UM A \ar[r,two heads,"r_A"] \ar[dr,"\epsilon_A"']
      & QA \ar[d,hook,"q_A"] \\
    & A
  \end{tikzcd} \]
  Concretely, this says that the underlying monoid of objects of $QA$ is the free monoid on
  $\Ob(A)$, and that given two elements $[x_1,\dots,x_n]$ and $[y_1,\dots,y_m]$ of the free monoid,
  the hom set is defined
  \[
    QA([x_1,\dots,x_n],[y_1,\dots,y_m])
      \coloneqq A(x_1\otimes\cdots\otimes x_n,y_1\otimes\cdots\otimes y_m)
  \]
  Theorem~\ref{thm:strong_classifier} then says that strong monoidal functors out of $A$ are the
  same as strict monoidal functors out of $QA$, or more precisely that for any monoidal category $B$
  there is an isomorphism of categories $\MMonCatStrong(A,B)\iso\MMonCat(QA,B)$.

  The cases of $\TTrCat$ and $\CCompCat$ are analogous.
\end{example}

\section{Objectwise-free monoidal, traced, and compact categories}\label{sec:ObjectwiseFree}

Our next goal is to show, continuing the assumptions of the Theorem~\ref{thm:strong_classifier},
that $\pcat{D}$ is 2-equivalent to the full subcategory of $\scat{D}$ spanned by those objects which
are "objectwise-free''. To make this precise, we will further assume we have a 1-category $\cat{S}$,
together with a fully faithful functor $\Disc\colon\cat{S}\to\ccat{C}_0$ into the underlying
category of $\ccat{C}$ with right adjoint $\Ob$, such that a morphism $f$ in $\ccat{C}_0$ is $\bo$
if and only if $\Ob f$ is an isomorphism. The reader may recognize this situation from
Proposition~\ref{prop:objectfree_Mod_bo}. We will write $\ccatFrOb{D}$ for the full sub-2-category
of $\scat{D}$ spanned by those objects $A$ for which there exists an object $s\in\cat{S}$ and a
$\bo$ morphism $F(\Disc(s))\twoheadrightarrow A$. Then we have that:

\begin{theorem}
    \label{thm:free_on_objects_strong_equivalence}
  The following composition is a biequivalence of 2-categories:
  \[ \begin{tikzcd}[column sep=small]
    \ccatFrOb{D} \ar[r,hook]
      & \scat{D} \ar[r,"\iota"]
      & \pcat{D}.
  \end{tikzcd} \]
\end{theorem}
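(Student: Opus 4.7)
The plan is to show that $G\colon \ccatFrOb{D} \hookrightarrow \scat{D} \xrightarrow{\iota} \pcat{D}$ is a biequivalence by verifying essential surjectivity on objects together with local equivalence of hom-categories (which splits into local essential surjectivity on 1-cells and local fully faithfulness on 2-cells). The last is immediate: the inclusion is a full sub-2-category and $\iota$ is locally fully faithful by Property~\ref{item:ioofflff} of Definition~\ref{def:strictifiable}. For essential surjectivity on objects, I would show that $QA \in \ccatFrOb{D}$ for every $A \in \pcat{D}$, where $Q$ is the left 2-adjoint from Theorem~\ref{thm:strong_classifier}. The counit $\epsilon_{UA}\colon \Disc(\Ob(UA)) \to UA$ of $\Disc \dashv \Ob$ is $\bo$ in $\ccat{C}$: since $\Disc$ is fully faithful the unit is an isomorphism, and the triangle identity forces $\Ob(\epsilon_{UA})$ to be an isomorphism as well. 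Because $F$ preserves $\bo$ by Property~\ref{item:pres_ff}, composing $F(\epsilon_{UA})$ with $r_A\colon FUA \twoheadrightarrow QA$ produces a $\bo$ morphism $F(\Disc(\Ob(UA))) \twoheadrightarrow QA$, witnessing $QA \in \ccatFrOb{D}$. The surjective equivalence $p_A \dashv q_A$ from Theorem~\ref{thm:strong_classifier} then gives $A \simeq G(QA)$ in $\pcat{D}$.

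For local essential surjectivity, fix $A, B \in \ccatFrOb{D}$ with $\alpha\colon F(\Disc(t)) \twoheadrightarrow A$ witnessing objectwise-freeness of $A$, and let $f\colon A \rightsquigarrow B$ be strong. Theorem~\ref{thm:strong_classifier} produces a unique strict $\bar{f}\colon QA \to B$ with $f \cong \bar{f} \circ p_A$. The crucial step is to construct a strict section $s_A\colon A \to QA$ of $q_A$ in $\scat{D}$. I would define $\tilde{\alpha}\colon F(\Disc(t)) \to QA$ to be the strict morphism adjoint under $F \dashv U\iota$ to the composite $\Disc(t) \xrightarrow{\alpha^\#} UA \xrightarrow{Up_A} UQA$, where $\alpha^\#$ is the adjoint of $\alpha$. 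Since $Uq_A \circ Up_A = U(q_A \circ p_A) = U(1_A) = 1_{UA}$, the $F \dashv U\iota$ adjoints of $q_A \circ \tilde{\alpha}$ and of $\alpha$ agree, so $q_A \circ \tilde{\alpha} = \alpha$. Applying the $(\bo,\ff)$-orthogonality in $\scat{D}$ to the resulting commutative square, with $\alpha$ on the left and $q_A$ on the right, yields a unique diagonal filler $s_A\colon A \to QA$ in $\scat{D}$ satisfying $s_A \circ \alpha = \tilde{\alpha}$ and $q_A \circ s_A = 1_A$.

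To finish, note that $q_A$ is fully faithful in $\pcat{D}$, being part of the surjective equivalence $p_A \dashv q_A$ (Lemma~\ref{lem:fully_faithful_surjective_equiv}). Since $q_A \circ p_A = 1_A = q_A \circ \iota(s_A)$, fully faithfulness yields a unique (automatically invertible) 2-cell $p_A \cong \iota(s_A)$ in $\pcat{D}$. Therefore $f \cong \bar{f} \circ p_A \cong \iota(\bar{f} \circ s_A)$, exhibiting $f$ as isomorphic to the image of the strict morphism $\bar{f} \circ s_A$ and completing the proof of local essential surjectivity. The main obstacle is the construction of $s_A$: the objectwise-freeness of $A$ is used precisely here, since it supplies the $\bo$ morphism $\alpha$ that, via $(\bo,\ff)$-orthogonality, allows one to rectify the strong unit $p_A$ into a strict section on the nose. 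Without this rectification, strong 1-cells out of $A$ could not in general be matched by strict ones.
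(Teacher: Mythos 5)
Your proposal is correct and follows essentially the same route as the paper: essential surjectivity via showing $QA$ is objectwise-free using $F(\epsilon_{UA})\in\bo$ composed with $r_A$, and the hom-category equivalence via a strict section of $q_A$ obtained by $(\bo,\ff)$-orthogonality against the $\bo$ morphism witnessing objectwise-freeness (your $s_A$ is the paper's $p'_A$, and your $\tilde{\alpha}$ unwinds to the paper's $r_A\circ F(\tilde{f})$). The only difference is packaging: the paper concludes the local equivalence in one step by noting $q_A$ is an equivalence in $\scat{D}$ and composing with the adjunction isomorphism $\scat{D}(QA,B)\iso\pcat{D}(\iota A,\iota B)$, whereas you verify local fullness/faithfulness and local essential surjectivity separately by hand.
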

\begin{proof}
  We first need to show that $\iota$ induces equivalences of categories
  $\scat{D}(A,B)\iso\pcat{D}(\iota A,\iota B)$ for any $A$ and $B$ which are objectwise-free. In
  fact, this will hold as long as $A$ is objectwise-free.

  If $A\in\scat{D}$ is objectwise-free, then there exists an object $s\in\cat{S}$ and a $\bo$
  morphism $f\colon F(\Disc(s))\twoheadrightarrow A$ in $\scat{D}$. By the $F\dashv U$ adjunction%
  \footnote{Note that we continue to commit the abuse of notation writing $U$ for $U\iota$.}, there
  is a unique morphism $\tilde{f}\colon\Disc(s)\to UA$ such that $f=\epsilon_A\circ Ff$. Factoring
  $\epsilon_A=q_A\circ r_A$ as in Theorem~\ref{thm:strong_classifier}, we obtain by orthogonality a
  unique lift $p'_A\in\scat{D}$ in the square
  \[ \begin{tikzcd}[column sep=1.7em]
    F\big(\Disc(s)\big) \ar[d,two heads,"f"'] \ar[r,"\tilde{f}"]
      & FUA \ar[r,two heads,"r_A"]
      & QA \ar[d,hook,"q_A"] \\
    A \ar[urr,dashed,"p'_A"] \ar[rr,equal] && A
  \end{tikzcd} \]
  By Lemma~\ref{lem:fully_faithful_surjective_equiv}, it follows that $q_A$ is an equivalence in
  $\scat{D}$. Hence composition with $q_A$ induces the left equivalence in
  \[ \begin{tikzcd}
    \scat{D}(A,B) \ar[r,"\equiv"]
      & \scat{D}(Q\iota A,B) \ar[r,"\iso"]
      & \pcat{D}(\iota A,\iota B)
  \end{tikzcd} \]
  and it is easy to check that the composition is precisely $\iota$ on hom categories.

  Finally, to prove essential surjectivity, consider an object $A\in\pcat{D}$. We know that in the
  factorization
  \[ \begin{tikzcd}
    FUA \ar[dr,"\epsilon_A"] \ar[d,two heads,"r_A"'] & \\
    QA \ar[r,hook,"q_A"'] & A.
  \end{tikzcd} \]
  $\iota q_A$ is an equivalence in $\pcat{D}$. We will be done if we can show that $QA$ is
  objectwise-free.

  Consider the counit $\epsilon_{UA}\colon\Disc(\Ob(UA))\to UA$. Because $\Disc$ is fully faithful,
  it follows that $\Ob(\epsilon_{UA})$ is an isomorphism, hence $\epsilon_{UA}\in\bo$, and therefore
  $F(\epsilon_{UA})\in\bo$ as well. Thus we can take the composition
  \[ \begin{tikzcd}
    F\big(\Disc\big(\Ob(UA)\big)\big) \ar[r,two heads,"F(\epsilon_{UA})"]
      &[1em] FUA \ar[r,two heads,"r_A"]
      & QA
  \end{tikzcd} \]
  showing that $QA$ is objectwise-free.
\end{proof}

\begin{corollary}
    \label{cor:object_frees}
  The canonical inclusions
  \begin{align*}
    \MMonFrObCat &\to \MMonCatStrong \\
    \TTrFrObCat &\to \TTrCatStrong \\
    \CCompFrObCat &\to \CCompCatStrong
  \end{align*}
  are biequivalences of 2-categories.
\end{corollary}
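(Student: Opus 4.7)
The plan is to deduce all three biequivalences uniformly by invoking Theorem~\ref{thm:free_on_objects_strong_equivalence} three times, once for each of the triples $(\scat{D},\pcat{D},\ccat{C})$ listed in Example~\ref{ex:strictifiable}. So the first step is to observe that Example~\ref{ex:strictifiable} has already verified that each of the collections
\begin{equation*}
  (\MMonCat,\MMonCatStrong,\CCat),\quad
  (\TTrCat,\TTrCatStrong,\CCat_{\iso}),\quad
  (\CCompCat,\CCompCatStrong,\CCat_{\iso})
\end{equation*}
admits strong morphism classifiers in the sense of Definition~\ref{def:strictifiable}. Hence Theorem~\ref{thm:strong_classifier} applies to each, producing the strong-to-strict reflection $Q$ used implicitly in Theorem~\ref{thm:free_on_objects_strong_equivalence}.

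Next I would set up the remaining data needed for Theorem~\ref{thm:free_on_objects_strong_equivalence}: a 1-category $\cat{S}$ together with a fully faithful $\Disc\colon\cat{S}\to\ccat{C}_0$ having a right adjoint $\Ob$ for which $\bo$ maps in $\ccat{C}_0$ are exactly those $f$ with $\Ob f$ an isomorphism. In every case I take $\cat{S}=\Set$ with $\Disc\colon\Set\to\Cat$ the discrete-category functor and $\Ob$ the set-of-objects functor; this is well known to be an adjunction, $\Disc$ is fully faithful, and bijective-on-objects functors are exactly those inducing bijections on object sets. The same data, viewed into $\CCat_{\iso}$ instead of $\CCat$, still works because natural isomorphisms between functors out of a discrete category are identities, so $\Disc$ remains fully faithful there and the factorization system behaves identically on maps of $\CCat_{\iso}$.

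The third step is to identify, in each case, the 2-category $\ccatFrOb{D}$ produced by Theorem~\ref{thm:free_on_objects_strong_equivalence} with the 2-category appearing in the statement of the corollary. Note that $F\circ\Disc\colon\Set\to\scat{D}$ is left adjoint to the composite $\Ob\circ U\iota\colon\scat{D}\to\Set$, and this composite is precisely $\UM$, $\UT$, $\UC$ respectively; therefore by uniqueness of adjoints $F\circ\Disc$ is naturally isomorphic to $\FM$, $\FT$, $\FC$. Consequently the objects $A\in\scat{D}$ admitting a $\bo$ morphism $F(\Disc(s))\twoheadrightarrow A$ from a free object are, in the three cases, exactly the objectwise-free monoidal/traced/compact categories, which is the definition of $\MMonFrObCat$, $\TTrFrObCat$, $\CCompFrObCat$ recalled in Section~\ref{sec:monoids_on_free}.

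With those identifications in hand, the three biequivalences are immediate applications of Theorem~\ref{thm:free_on_objects_strong_equivalence}. I do not expect any genuine obstacle here; the only care needed is in the traced and compact cases, where one must confirm that replacing $\CCat$ with $\CCat_{\iso}$ does not disturb the $\Disc\dashv\Ob$ adjunction or the characterization of $\bo$ maps, and that the left adjoint $F$ produced in Corollary~\ref{cor:2_adjunctions_MonTrComp} indeed lands in the strict 2-category $\scat{D}$ rather than merely in $\pcat{D}$ (which is immediate because the free constructions manifestly produce strict structure).
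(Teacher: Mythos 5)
Your proposal is correct and follows essentially the same route as the paper: instantiate Example~\ref{ex:strictifiable}, take $\cat{S}=\Set$ with the $\Disc\dashv\Ob$ adjunction, note that $\bo$ maps are exactly those inverted by $\Ob$, and apply Theorem~\ref{thm:free_on_objects_strong_equivalence}. The extra care you take in identifying $F\circ\Disc$ with $\FM,\FT,\FC$ and in checking the $\CCat_{\iso}$ variants is sound but goes beyond what the paper spells out.
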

\begin{proof}
  Let $\cat{S}=\Set$, and let $\Disc\colon\cat{S}\leftrightarrows\Cat\cocolon\Ob$ be the discrete
  adjunction. Note that a morphism $f$ in $\CCat$ is $\bo$ if and only if $\Ob(f)$ is an
  isomorphism. The result follows by Example~\ref{ex:strictifiable} and
  Theorem~\ref{thm:free_on_objects_strong_equivalence}.
\end{proof}

\end{document}